\def\makeautorefname#1#2{\expandafter\def\csname#1autorefname\endcsname{#2}}
\def\equationautorefname~#1\null{(#1)\null}
\theoremstyle{plain}
\newtheorem{thm}{Theorem}[section]
\newtheorem{cor}{Corollary}[section]
\newtheorem{prop}{Proposition}[section]
\theoremstyle{definition}
\newtheorem{defn}{Definition}[section]
\newtheorem{con}{Construction}[section]
\newtheorem{notn}{Notation}[section]
\newtheorem{rem}{Remark}[section]
\let\c@cor=\c@thm
\let\c@prop=\c@thm
\let\c@lem=\c@thm
\let\c@conj=\c@thm
\let\c@defn=\c@thm
\let\c@notn=\c@thm
\let\c@rem=\c@thm
\let\c@con=\c@thm
\let\c@equation\c@thm
\numberwithin{equation}{section}
\let\c@equation\c@thm
\numberwithin{equation}{section}
\newcommand{\bm}{\mathbf{m}}
\newcommand{\bn}{\mathbf{n}}
\newcommand{\bA}{\mathbb{A}}
\newcommand{\cA}{\mathcal{A}}
\newcommand{\bB}{\mathbb{B}}
\newcommand{\bC}{\mathbb{C}}
\newcommand{\bD}{\mathbb{D}}
\newcommand{\bF}{\mathbb{F}}
\newcommand{\bJ}{\mathbb{J}}
\newcommand{\bK}{\mathbb{K}}
\newcommand{\bL}{\mathbb{L}}
\newcommand{\bN}{\mathbb{N}}
\newcommand{\bO}{\mathbb{O}}
\newcommand{\bP}{\mathbb{P}}
\newcommand{\bQ}{\mathbb{Q}}
\newcommand{\bR}{\mathbb{R}}
\newcommand{\bT}{\mathbb{T}}
\newcommand{\bU}{\mathbb{U}}
\newcommand{\sA}{\mathscr{A}}
\newcommand{\sB}{\mathscr{B}}
\newcommand{\sC}{\mathscr{C}}
\newcommand{\sD}{\mathscr{D}}
\newcommand{\sE}{\mathscr{E}}
\newcommand{\sI}{\mathscr{I}}
\newcommand{\sL}{\mathscr{L}}
\newcommand{\sM}{\mathscr{M}}
\newcommand{\sO}{\mathscr{O}}
\newcommand{\sP}{\mathscr{P}}
\newcommand{\sS}{\mathscr{S}}
\newcommand{\sT}{\mathscr{T}}
\newcommand{\sV}{\mathscr{V}}
\newcommand{\sZ}{\mathscr{Z}}
\newcommand{\al}{\alpha}
\newcommand{\be}{\beta}
\newcommand{\ga}{\gamma}
\newcommand{\de}{\delta}
\newcommand{\epz}{\varepsilon}
\newcommand{\ze}{\zeta}
\newcommand{\et}{\eta}
\newcommand{\tha}{\theta}
\newcommand{\io}{\iota}
\newcommand{\la}{\lambda}
\newcommand{\rh}{\rho}
\newcommand{\si}{\sigma}
\newcommand{\ta}{\tau}
\newcommand{\om}{\omega}
\newcommand{\GA}{\Gamma}
\newcommand{\DE}{\Delta}
\newcommand{\LA}{\Lambda}
\newcommand{\PI}{\Pi}
\newcommand{\SI}{\Sigma}
\newcommand{\OM}{\Omega}
\newcommand{\ub}[1]{\underline{#1}}
\newcommand{\ltarr}{\longleftarrow}
\newcommand{\rtarr}{\longrightarrow}
\newcommand{\com}{\circ}     
\newcommand{\iso}{\cong}     
\newcommand{\htp}{\simeq}    
\newcommand{\sma}{\wedge}    
\newcommand{\wed}{\vee}      
\newcommand{\id}{\mathrm{id}}
\newcommand{\oid}{\mathds{1}} 
\newcommand{\im}{\mathrm{im}}
\newcommand{\tand}{\text{\ \ and \ \ }}   
\DeclareMathOperator*{\colim}{colim}
\newcommand{\Mult}{\mathbf{Mult}}
\newcommand{\Fix}{\mathrm{Fix}}
\newcommand{\SIG}{\mathbf{\SI^{\infty}_G}} 
\newcommand{\Sph}{\mathbf{S_G}}
\newcommand{\PR}{\mathbf{Pr}_G}
\newcommand{\Eone}{\sE'}
\newcommand{\GEone}{G\Eone}
\newcommand{\EGone}{\sE_G'}
\newcommand{\tG}{\mathcal{E}G}
\newcommand{\tSI}{\mathcal{E}\SI}
\newcommand{\All}{\mathrm{All}}
\newcommand{\Orb}{\mathrm{Orb}}
\newcommand{\GDAll}{G\sD_{\All}}
\newcommand{\GDZAll}{G\sD^\sZ_{\All}}
\newcommand{\GDOrb}{G\sD_{\Orb}}
\newcommand{\GDZOrb}{G\sD^\sZ_{\Orb}}
\newcommand{\DZAll}{\sD^\sZ_{\All}}
\title{Models of $G$-spectra as presheaves of spectra}
\author{Bertrand J. Guillou}
\email{bertguillou@uky.edu}
\address{Department of Mathematics,
University of Kentucky, 
Lexington, KY 40506 USA}
\author{J. Peter May}
\email{may@math.uchicago.edu}
\address{Department of Mathematics,
The University of Chicago, 
Chicago, IL 60637 USA}
\thanks{B.~J.~Guillou was  partially supported  by  Simons Collaboration Grant  No. 282316 and NSF grants DMS-1710379 and DMS-2003204.}
\begin{document}

\begin{abstract}

Let $G$ be a finite group. We give Quillen equivalent 
models for the category of $G$-spectra as categories
of spectrally enriched functors from explicitly
described domain categories to nonequivariant spectra.
Our preferred model is based on equivariant infinite loop 
space theory applied to elementary categorical data. 
It recasts equivariant stable homotopy theory in terms of
point-set level categories of $G$-spans and nonequivariant
spectra.  We also give a more topologically grounded model 
based on equivariant Atiyah duality. 
\end{abstract}

\maketitle

\tableofcontents

\section*{Introduction}

The equivariant stable homotopy category is of fundamental importance in algebraic topology. It is the natural home in which to study equivariant stable homotopy theory, a subject that has powerful and unexpected nonequivariant applications and is also of great intrinsic interest.   The foundations were well established by the mid-1980's, and by then the importance of working with equivariant spectra had already become abundantly clear, especially with Carlsson's proof of the Segal conjecture \cite{Carl0}.  The following decade saw much further progress;  Mackey functor and $RO(G)$-graded cohomology theories came of age, the Tate square and norm maps were introduced and given their first applications \cites{GrM2,GrM3}, and  THH, TC, and their applications to algebraic $K$-theory had made their appearance \cite{BHM}.  Summary accounts of where the subject stood in the mid-1990's are given in \cites{Carl, GrM1, EHCT}.   While there was continued work in the following decade, the subject really took hold in the mainstream of algebraic topology with its unexpected role in the 2009 solution of the Kervaire invariant problem by Hill, Hopkins, and Ravenel \cite{HHR}.   For example, on a foundational level, understanding norms as maps of equivariant spectra plays a key role.  

The first draft of this paper appeared in 2011, and the subject has truly blossomed in the decade since.   Formally, just as the category of $G$-spaces is Quillen equivalent to the presheaf category of contravariant functors from the orbit category of $G$ to spaces, the category of $G$-spectra is Quillen equivalent to the presheaf category of spectrally enriched contravariant functors from its full subcategory of suspension spectra of orbits to spectra. We shall say more about that shortly.  The purpose of this paper is to  replace the target presheaf category by one that is Quillen equivalent and yet is accessible to concrete constructions on the level of related presheaf categories of spaces and categories.

Setting up the equivariant stable homotopy category with its attendant 
model structures takes a fair amount of work. The first version was
due to Lewis and May \cite{LMS}, and more modern versions that we shall start from
are given in Mandell and May  \cite{MM} and, even more recently, \cite{HHR}.
A result of Schwede and Shipley \cite{SS} (reworked in \cite{GM0} to give the starting point of this paper)
asserts that any stable model category $\sM$ is equivalent to a category $\mathbf{Pre}(\sD,\sS)$ of 
{spectrally} enriched presheaves  with values in a chosen category $\sS$ of 
spectra.  However, the domain $\sS$-category $\sD$ is a \textit{full} $\sS$-subcategory of 
$\sM$ and typically is as inexplicit and mysterious as $\sM$ itself. From the point of view of 
applications and calculations, this is therefore only a starting point.   
One wants a more concrete understanding of the category $\sD$.  
We shall give explicit equivalents to the domain category $\sD$ in the case when
$\sM = G\sS$ is the category of $G$-spectra for a finite group $G$, and we fix
a finite group $G$ throughout.  

We shall define an $\sS$-category (or spectral category) $G\sA$ by applying  
a suitable infinite loop space machine to simply defined categories of finite $G$-sets.
The spectral category $G\sA$ is a spectrally enriched version of the Burnside 
category of $G$.  We shall prove the following result.

\begin{thm}[Main theorem]\label{MAINMAIN} There is a zig-zag of Quillen equivalences
\[ G\sS \simeq \mathbf{Pre}(G\sA,\sS)\]
relating the category of $G$-spectra to the category of spectrally enriched contravariant 
functors $G\sA\rtarr \sS$.
\end{thm}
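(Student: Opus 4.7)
The plan is to combine the Schwede--Shipley recognition theorem for stable model categories with an equivariant Barratt--Priddy--Quillen-style identification of mapping spectra.

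First, I would apply the general theorem of \cite{SS}, in the form used in \cite{GM0}, to the model category $G\sS$. Since $G\sS$ is stable, cofibrantly generated, and compactly generated by the suspension spectra $\Sigma^\infty_G(G/H)_+$ for $H\leq G$, or equivalently by the $\Sigma^\infty_G S_+$ for all finite $G$-sets $S$, this automatically produces a zig-zag of Quillen equivalences
\[ G\sS \simeq \mathbf{Pre}(\sD,\sS), \]
where $\sD$ is the full spectral subcategory of an appropriate cofibrant-fibrant replacement of $G\sS$ spanned by these generators. The proof then reduces to exhibiting a zig-zag of Dwyer--Kan equivalences of spectral categories between $\sD$ and $G\sB$, since such a zig-zag induces a Quillen equivalence on presheaf categories by general change-of-enrichment principles.

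The key computational input is an equivariant Barratt--Priddy--Quillen theorem at the level of spans. Concretely, for finite $G$-sets $S$ and $T$, the $\sS$-valued mapping object $\sD(\Sigma^\infty_G S_+,\Sigma^\infty_G T_+)$ should be equivalent to the spectrum obtained by applying the chosen infinite loop space machine to the permutative $G$-category of spans from $S$ to $T$, which is the definition of $G\sB(S,T)$. On $\pi_0$ this recovers the classical identification (due to Segal and tom Dieck) of equivariant stable maps between orbits with the free abelian group on spans, but the task here is to promote this to an equivalence of mapping spectra that is natural and multiplicatively coherent in both variables simultaneously. I would do this by selecting an infinite loop space machine multiplicative enough that composition of spans via pullback of finite $G$-sets is carried to the composition on mapping spectra given by smash product and evaluation in $G\sS$.

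The principal obstacle, as I see it, is exactly this last coherence: matching the two composition laws at the level of spectral categories rather than merely on homotopy categories. A direct comparison between the categorical Burnside input and the smash-product composition in $G\sS$ is delicate, and I would expect the intermediate topological model based on equivariant Atiyah duality alluded to in the abstract to serve as the essential bridge. The idea would be to first identify $\sD$ with a spectral category built from Thom spectra of equivariant normal bundles of embedded finite $G$-sets, where Atiyah duality produces the mapping spectra and composition geometrically, and then to identify that topological category with $G\sB$ by applying the infinite loop space machine to the categorical model. Once this chain of Dwyer--Kan equivalences is in place, the main theorem follows by transporting Quillen equivalences through the induced equivalences of presheaf categories.
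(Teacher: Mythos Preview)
Your overall architecture matches the paper exactly: apply the Schwede--Shipley/\cite{GM0} result to get $G\sS\simeq\mathbf{Pre}(G\sD,\sS)$ with $G\sD$ the full spectral subcategory on (bifibrant replacements of) the $\Sigma^\infty_G(A_+)$, then reduce to a zigzag of weak equivalences of spectral categories between $G\sD$ and $G\sB$, and finally invoke equivariant Barratt--Priddy--Quillen to identify hom spectra. You have also correctly located the essential difficulty: it is not the BPQ identification of individual hom spectra, but the compatibility with composition.

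Where you diverge is in how that coherence is handled. You propose using the Atiyah-duality model as the ``essential bridge'' between $\sD$ and $G\sB$. In the paper, Atiyah duality is \emph{not} a bridge in the main proof; it is presented only in \S\ref{SecAt} as an alternative, more geometric model, and that model comes with its own complication (it is only a \emph{weakly unital} spectral category because of the cofibrant approximation $\ga\colon\Sph\to S_G$). The main proof instead works entirely through the equivariant infinite loop space machine: one builds an $\sS_G$-category $\sB_G$ by applying $\bK_G$ to the free $\sO_G$-algebras $\sE_G(A)=\bO_G(A_+)$, with composition defined operadically via the pairing $\om$, the pullback $(\id\times\Delta\times\id)^*$, and the pushforward $\pi_!$. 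The multiplicative form of BPQ (diagram (\ref{accept1}) and \myref{curious}) then identifies this composition, under $\al$, with the very simple map $\id\sma\epz\sma\id$ on suspension spectra. Self-duality of $\Sigma^\infty_G(A_+)$ is established \emph{categorically} (\myref{NewDual}, \S\ref{selfish}) rather than geometrically, so that the resulting $\delta\colon\bB\sma\bA\to F_G(\bA,\bB)$ of \myref{DELTA} is compatible with this composition on the nose. The final step is a model-categorical argument with replacements $Q\sB_G\to RQ\sB_G$ that produces the actual zigzag $\sB_G\leftarrow Q\sB_G\to RQ\sB_G\to\sD_G$.

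There is one further structural point you do not mention: the paper works \emph{equivariantly} throughout---comparing the $\sS_G$-categories $\sB_G$ and $\sD_G$---and only at the end passes to $G$-fixed points to obtain the comparison of $\sS$-categories $G\sB\simeq G\sD$ (via \myref{start} and \myref{FixIso2}). This is not incidental: the operadic description of composition and the BPQ compatibility are formulated and proved on the $G$-spectrum level, and the nonequivariant conclusion is then deduced. Your sketch, phrased directly for $G\sB$, would need this detour made explicit.
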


{Such functors are often called presheaves.}  We reemphasize the simplicity of our spectral 
category $G\sA$: no prior knowledge of $G$-spectra is required to define it. 

We give a precise description of the relevant categorical input and restate the
main theorem more precisely in \autoref{sec:GA}.  The central point of the proof is to use equivariant infinite 
loop space theory to construct the spectral category $G\sA$ from elementary categories 
of finite $G$-sets.  We prove our main theorem in \autoref{sec:MainThm}, using the equivariant Barratt-Priddy-Quillen (BPQ) 
theorem to compare $G\sA$ to the spectral category $\GDAll$ given by the suspension $G$-spectra $\SI^{\infty}_G(A_+)$  
of based finite $G$-sets $A_+$, which is a standard choice for application of the theorem of Schwede and Shipley to $G\sS$.
The classical Burnside category of isomorphism classes of spans of finite $G$-sets leads to a calculation of the homotopy 
category $\text{Ho}\GDAll$ (see \autoref{hostart} below), and $G\sA$ starts from the bicategory of such 
spans, in which isomorphisms of spans give the $2$-cells.

Intuitively, (algebraic) Mackey functors can be viewed as functors from $\text{Ho}\GDAll$ to abelian groups, and the 
result of Schwede and Shipley says that $G$-spectra can be viewed as functors from $\GDAll$ to spectra.  We
are lifting the standard purely algebraic understanding of Mackey functors to obtain an analogous 
algebraic understanding of $G$-spectra as functors from $G\sA$ to spectra.
{Thus the slogan is that $G$-spectra are spectral Mackey functors.}

 It is crucial to our work that the $G$-spectra $\SI^{\infty}_G(A_+)$ are self-dual.  Our original 
proof took this as a special case of equivariant Atiyah duality (\autoref{Atiyah}), thinking of $A$ as a trivial 
example of a smooth closed $G$-manifold.  We later found a direct categorical proof (\autoref{selfish}) of this duality 
based on equivariant infinite loop space theory  and the equivariant BPQ theorem.  This allows us to give an
illuminating new proof of the required self-duality as we go along. We give presheaf versions of a few standard 
constructions on $G$-spectra in \autoref{sec:Comparisons}.  Switching gears, we give an alternative presheaf model for the category 
of $G$-spectra in terms of classical Atiyah duality in \autoref{SecAt}. 
An appendix, \autoref{background}, provides some background on the two model categories of $G$-spectra used here, equivariant orthogonal spectra and equivariant $S$-modules, and describes and compares the specialization of \cite{GM0} to those categories that provides 
the starting point for our work.

We take what we need from equivariant infinite loop space theory as a black box in this paper.  The additive and multiplicative space level theories are worked out in \cite{MMO} and \cite{GMMO1}, respectively.  The generalization from space level to category level input is based on general (and not necessarily equivariant) categorical coherence theory that is worked out in  \cite{GMMOMain}.   What is needed for this paper  is a small part of the full story there.

We thank a first diligent referee for demanding a reorganization of our original paper.  We thank a second diligent referee for an incredibly detailed list of sixty one well-thought through detailed suggestions for improving the exposition.
We also thank Ang\'{e}lica Osorno and Inna Zakharevich for very helpful comments, and we especially thank Osorno and Anna Marie Bohmann for catching an error in the handling of pairings in earlier versions of this work.  That error is one reason for the very long delay in the publication of this paper, which was first posted on ArXiv  on August 21, 2011. The delay is no fault of this journal.

In the interim, we teamed with Osorno and Mona Merling to fully work out the relevant infinite loop space theory, which turned out to be both surprisingly demanding and unexpectedly interesting. Also in the interim, 
Bohmann and Osorno \cite{BO} 
introduced categorical Mackey functors and used these, together with our main result, to produce a functorial construction of equivariant Eilenberg-Mac~Lane spectra for Mackey functors. The prospect of applications like theirs was a major motivation for our variant of the Schwede and Shipley model for the homotopy category of $G$-spectra.
A small error\footnote{We are grateful to Ang\'{e}lica Osorno for helping us discover and fix this error.}
 in \cite{BO}  is corrected in the short appendix, \autoref{Whisker}, of this paper.
 Further applications to the concrete construction of genuine $G$-spectra are in development 
 in their work and in work of Cary Malkievich and Merling \cites{MalkMerl,MalkMerl20}.  
 During the delay, Jonathan Rubin combed through our draft and caught a great many errors of detail  and infelicities.  Needless to say, we are responsible for all that remain. 
 
\medskip
\noindent {\bf Comparison with alternative approaches.}
We also note that since this article first appeared online in 2011, several alternative approaches have been given by other authors. First among these was the work of Barwick \cite{CB}.
A notable difference is that our spectral Burnside category $G\sA$ is a group completion of Barwick's effective Burnside category. A second difference is that Barwick is working in the $\infty$-categorical setting, so that questions of strictness, such as those necessitating our \autoref{Whisker}, do not arise.
Moreover, Barwick's work provides a conceptual  generalization that applies to handle the case of profinite groups, as well as other applications.
Later, streamlined alternative approaches were given in \cite{Nar} and \cite[Appendix~A]{CMNN}. The version described in \cite{CMNN} 
 has the advantage of providing a monoidal equivalence (see also \cite[Section~11]{BGS}). 
See \autoref{Monoidal} for further discussion.

\section{The bicategory $G\sE$ and
 $\sS$-category $G\sA$ }
\label{sec:GA}

In this paper, $\sS$ denotes the category of (nonequivariant) orthogonal spectra,
and $G\sS$ denotes the category of orthogonal $G$-spectra. For most of the paper, we index $G\sS$ on a complete universe, but in \autoref{background} we allow a more general universe.
See \autoref{background} for some discussion of the comparison between models of $G$-spectra.
We first define the $\sS$-category $G\sA$ (\autoref{KGE}) and restate our main theorem. 
Conceptually $G\sA$ can be viewed as obtained by 
applying a nonequivariant infinite loop space machine $\bK$ to a category $G\sE$
``enriched in permutative categories''.\footnote{{A permutative category is a symmetric strict monoidal category.}}
The term in quotes can be made categorically
precise \cites{Guill, HyPow, Schmitt}, but we shall use it just as an informal
slogan since no real categorical background is necessary to our work here: we shall give 
direct elementary definitions of the examples we use, and they do satisfy the 
axioms specified in the cited sources. 
We then define (\autoref{compcat}) a $G$-category\footnote{{In general, we understand a $G$-category to be a category internal and not just enriched in $G$-sets,
meaning that $G$ can act on both objects and morphisms.}} $\sE_G$ ``enriched in permutative $G$-categories'', from
which $G\sE$ is obtain by passage to $G$-fixed subcategories. 
 \autoref{duality} contains a discussion of duality that will be needed in \autoref{sec:MainThm} for the proof of our main theorem.

\subsection{The bicategory $G\sE$ of $G$-spans}\label{Gspan}

In any category $\sC$ with pullbacks, the bicategory of spans in $\sC$ has $0$-cells the objects
of $\sC$. The $1$-cells 
from $A$ to $B$ are zig-zags $\xymatrix@1{   B & D \ar[l] \ar[r] & A \\}  
$ of morphisms in $\sC$, and 2-cells between two such are diagrams
\begin{equation}\label{spans} 
\raisebox{5.8ex}{ \xymatrix@R=1em{  
 & D \ar[dl] \ar[dr]  \ar[dd]^{\iso} & \\
 B & & A. \\
 & E. \ar[ul]\ar[ur] }}
 \end{equation}
Composites of $1$-cells are given by (chosen) pullbacks
\begin{equation}\label{pbdiag} 
\xymatrix @R=1em{
&& F \ar[dl] \ar[dr] && \\
& E \ar[dl] \ar[dr] & & D \ar[dl] \ar[dr] & \\
C && B && A.  \\}
\end{equation}
The identity $1$-cells are the diagrams $\xymatrix@1{A & A \ar[l]_-{=} \ar[r]^-{=} & A}$. 
The associativity and unit constraints are determined by the universal property of pullbacks.
Observe that the $1$-cells $A\rtarr B$ can just as well
be viewed as objects over $B\times A$. 
Viewed this way, the identity $1$-cells are given
by the diagonal maps $\DE\colon A\rtarr A\times A$, and the composition can be displayed in the diagram
\begin{equation}\label{pbdiagtoo}
\xymatrix  
{ E\times D \ar[d] & &  F   \ar[ll] \ar[dr] \ar[d] &  \\
C\times B\times B \times A  & & C\times B \times A  \ar[ll]^-{\id \times \DE \times \id} \ar[r]_-{\pi} & C \times A, \\}
\end{equation}
where the square is a pullback and $\pi$ is the projection.  That is, composition is obtained from the obvious composition of maps to products by pulling back contravariantly along $\id\times \DE\times \id$  and then pushing forward covariantly along $\pi$.  See \cite{PS}*{Theorem~5.2} for an illuminating discussion of bicategories of spans from this point of view.

Our starting point is the bicategory of spans of (unbased) finite $G$-sets.  Here the
disjoint union of $G$-sets over $B\times A$ gives us a symmetric monoidal structure 
on the category of $1$-cells and $2$-cells $A\rtarr B$ for each pair $(A,B)$.  We can think of the bicategory of spans as a category ``enriched in the 
category of symmetric monoidal categories''.  Again, the notion in quotes does not make obvious mathematical sense since there is no obvious monoidal structure on the category of symmetric monoidal categories, but category theory due to the first author \cite{Guill} (see also \cites{HyPow, Schmitt}) explains what these objects are and how to rigidify them to categories enriched in permutative categories. 

We repeat that we have no need to go into such categorical detail. Rather than apply such category theory, we give a direct elementary 
construction of a strict structure that is equivalent to the intuitive notion of the category ``enriched in symmetric monoidal categories'' of 
spans of finite $G$-sets. We first define a bipermutative category $G\sE(1)$ that is equivalent to the symmmetric bimonoidal 
{groupoid}
of finite $G$-sets. 

\begin{defn}\label{Biperm} 
Any finite $G$-set is isomorphic to one of the form  $A=\ub{n}^\al$, where $\ub{n} = \{1,\cdots, n\}$, $\al$
is a homomorphism $G\rtarr \SI_n$, and $G$ acts on $\ub{n}$ by $g\cdot i = \al(g)(i)$ for $1\leq i\leq n$. 
{\em We understand finite $G$-sets to be of this restricted form from now on.} A $G$-map 
$f\colon \ub{m}^\al\rtarr \ub{n}^\be$ is a function $f\colon \ub{m}\rtarr \ub{n}$ such
that $f\com\al(g) = \be(g)\com f$ for $g\in G$. The morphisms of $G\sE(1)$ are the isomorphisms
$\ub{n}^\al\rtarr \ub{n}^\be$ of $G$-sets.  

The disjoint union $D\amalg E$ of finite $G$-sets
$D = \ub{s}^\si$ and $E = \ub{t}^\ta$ is $\ub{s+t}^{\si \oplus \ta}$, with $\si \oplus \ta$ being the
evident block sum $G\rtarr \SI_{s+t}$. 
 With the evident commutativity isomorphism, this gives the 
permutative 
groupoid\footnote{Though the terminology ``permutative category'' is more prevalent than ``permutative groupoid'', we find it useful to remind the reader that we are only considering isomorphisms.}
$G\sE(1)$ of finite $G$-sets; the empty finite $G$-set is the unit for $\amalg$.  
To define the cartesian product, for each $s$ and $t$ let $\la_{s,t}\colon \ub{st}\rtarr \ub{s}\times \ub{t}$ denote the lexicographic ordering. 
Then $D\times E$ is $\ub{st}^{\si\otimes \ta}$ 
where $\si\otimes \ta$ is the permutation
\[ \ub{st}\xrightarrow{\la_{s,t}}\ub{s}\times \ub{t} \xrightarrow{\si\times\ta} \ub{s}\times \ub{t} \xrightarrow{\la_{s,t}^{-1}} \ub{st}\]
as in \cite[(3.6)]{GMMOMain}.
There is again an evident commutativity isomorphism, and $\amalg$ and $\times$ 
give $G\sE(1)$ a structure of bipermutative category 
in the sense of \cite{MQR}; the multiplicative unit is the trivial $G$-set $1=(\ub{1},\epz)$, 
where $\epz(g) = 1$ for $g\in G$.

As we will need it later, we also introduce the reordering permutation $\tau_{s,t}\in \SI_{st}$, defined as the composition
\[ \ub{st} \xrightarrow{\la_{s,t}} \ub{s}\times \ub{t} \xrightarrow{\iso} \ub{t}\times \ub{s} \xrightarrow{\la_{t,s}^{-1}} \ub{ts} = \ub{st}.\]
as in \cite[Definition~3.8]{GMMOMain}.
\end{defn}

We may view $G\sE(1)$ as the groupoid of finite $G$-sets over the one point $G$-set $1$,
and we generalize the definition as follows.

\begin{defn}\label{PermCat}  
For a finite $G$-set $A$, we define a permutative 
{groupoid} $G\sE(A)$ of finite
$G$-sets over $A$. The objects of $G\sE(A)$ are the $G$-maps $p\colon D\rtarr A$.  The morphisms 
$p\rtarr q$, $q\colon E\rtarr A$, are the $G$-isomorphisms $f\colon D\rtarr E$ such that $q\com f=p$.  
Disjoint union of $G$-sets over $A$ gives $G\sE(A)$ a structure of permutative category; 
its unit is the empty set over $A$.  When $A=1$, $G\sE(A)$ is  the (``additive'') permutative category 
of the previous definition. 
\end{defn} 

\begin{rem}\label{product}  There is also a product 
$\times\colon G\sE(A)\times G\sE(B)\rtarr G\sE(A\times B)$. It takes $(D,E)$ to 
$D\times E$, where $D$ and $E$ are finite $G$-sets over $A$ and $B$, respectively. This product 
is also strictly associative and unital, with unit the unit of $G\sE(1)$, and it has 
an evident commutativity isomorphism. Restriction to the object $1$ gives the ``multiplicative'' 
permutative category of \autoref{Biperm}. This product distributes over $\amalg$ and {\it almost} makes the 
enriched category $G\sE$ of the next definition 
into a ``category enriched in permutative categories'', in the sense defined in \cite{Guill}. 
The ``almost''
refers to the fact that the category we define does not have a strict unit, a problem that was encountered in \cite{BO}
and is fixed in \autoref{Whisker} below.
\end{rem}

\begin{defn}\label{span1a}  
We define a bicategory $G\sE$ with a permutative {hom groupoid}
for each pair
of objects as follows.  The $0$-cells of $G\sE$ are the finite $G$-sets, which may be thought of as 
the categories $G\sE(A)$.  The permutative 
{groupoid} $G\sE(A,B)$ 
of $1$-cells and $2$-cells $A\rtarr B$ is $G\sE(B\times A)$, as defined in 
\autoref{PermCat}. The $1$-cells are thought of as spans and the $2$-cells as isomorphisms of spans. The composition 
\[ \circ\colon G\sE(B,C)\times G\sE(A,B)\rtarr G\sE(A,C) \]
is defined via pullbacks, as in the diagram (\ref{pbdiag}).  
Precisely, following \cite[7.2]{BO}, we choose the pullback $F$ in (\ref{pbdiag}) to be the sub $G$-set of $E\times D$, ordered lexicographically, consisting 
of the elements $(e,d)$ such that $d$ and $e$ map to the same element of $B$. 
The diagonal map $\Delta_A:A\rtarr A\times A$ serves as a unit $1$-cell,  and it is helpful to  reinterpret composition in terms of the diagram \autoref{pbdiagtoo}.
\end{defn}

\begin{rem}\label{unitprob}
This bicategory is almost a $2$-category. The composition of spans is strictly associative, but if $|A|\geq 2$ then $\Delta_A:A\rtarr A\times A$ acts as a strict unit only on the right and so should be called a pseudo-unit $1$-cell.  The point is that with our chosen model for the pullback, the left map in the span composition
\[ \xymatrix @R=1em{
&& \Delta_B\circ E \ar[dl]_{p_1} \ar[dr]^{p_2} && \\
& B \ar@{=}[dl] \ar@{=}[dr] & & E \ar[dl]_f \ar[dr]^g & \\
B && B && A } 
\]
must be order-preserving. Therefore, if $f$ is not order-preserving, then $\Delta_B\circ E \neq E$. 
However, in view of the evident commutative diagram
\[ \xymatrix{
&  \Delta_B\circ E \ar[dl]_{p_1} \ar[dr]^{g\com p_2}  \ar[d]^{p_2} \\
B & E\ar[l]^{f} \ar[r]_{g} & A, \\} \]
the function $p_2$ specifies a reordering isomorphism of spans
\begin{equation}\label{laDe}
\xymatrix@1{ \Delta_B\circ E\ar[r]^-{\ell_{B,E}} & E \\} 
\end{equation} 
In \autoref{Whisker}, we show how to whisker the pseudo-unit $1$-cells to obtain an equivalent construction $\GEone$ that still has a strictly associative composition but now has strict two-sided unit $1$-cells.  The construction is closely analogous to the usual whiskering of a degenerate basepoint in a space to obtain a nondegenerate basepoint.  
\end{rem}

\begin{rem}\label{tenprod}  We are suppressing some categorical details that are irrelevant to our work.  The composition 
distributes over coproducts, and it should be defined on a ``tensor product'' rather than a cartesian product of 
permutative categories. Such a tensor product does in fact exist \cite{HyPow}, 
 in the sense that the 2-category of permutative categories has a pseudo-monoidal structure (\cite[Section~2.3]{HyPow}); however, we will not use this.
Rather, we will use that 
composition is a pairing that gives rise to a pairing defined on
the smash product of the spectra constructed from $G\sE(B,C)$ and $G\sE(A,B)$.  This passage from pairings of permutative
categories to pairings of spectra has a checkered history even nonequivariantly,\footnote{That starts from \cite{MayPair}, which is modernized, corrected, and generalized  in \cite{GMMOMain}, where pairings are subsumed as $2$-ary morphisms in multicategories.} and it is here that a mistake occurred in earlier versions of this paper.  As explained in \cite{GMMOMain}, categorical strictification and the
full development of multiplicative equivariant  infinite loop space theory  resolve the relevant issues.
\end{rem}

Before beginning work, we recall an old result that motivated this paper.
The category $[G\sE]$ of {isomorphism classes of} $G$-spans is obtained from the bicategory 
$G\sE$ of $G$-spans by identifying spans from $A$ to $B$ if there is an 
isomorphism between them.  Composition is again by pullbacks. 
We add spans from $A$ to $B$ by taking disjoint unions, and that
gives the morphism set $[G\sE](A,B)$ a structure of abelian monoid. 
We apply the Grothendieck construction to obtain an abelian group of 
morphisms $A\rtarr B$. This gives an additive category $\sA\!b[G\sE]$. 

\begin{defn}\label{GDcon}  Define $\GDAll$ to be the full subcategory of $G\sS$ whose objects are {fibrant replacements} of the $G$-spectra $\SI^{\infty}_G(A_+)$ in the stable model structure \cite{MM}, 
where $A$ runs over the finite $G$-sets, and let $\text{Ho}\GDAll\subset  \text{Ho}G\sS$ denote its homotopy category.
\end{defn}

\begin{thm}[{\cite[V.9.6]{LMS}}\footnote{{All $G$-spectra in \cite{LMS} are fibrant, but we are using orthogonal  $G$-spectra in this paper.  The homotopy categories are equivalent.}} ]\label{hostart} The categories $\text{Ho}\GDAll$ and $\sA\!b[G\sE]$ are 
isomorphic.
\end{thm}

\subsection{The precise statement of the main theorem}

Infinite loop space theory associates a spectrum $\bK \sA$ to a permutative category $\sA$.
There are several machines available and all are equivalent \cite{MayPerm2}. 
Since it is especially convenient for the equivariant generalization, we require $\bK$ to take 
values in
the category $\sS$ of
 orthogonal spectra \cite{MMSS}, {but symmetric spectra would also work.} 
Slightly modifying  the axiomatization of \cite{MayPerm2}, we 
require $\bK$ to take values in {positive}\footnote{{This means that $E_0 \rtarr \Omega E_1$ need not be an equivalence}} $\OM$-spectra 
and we require a natural map
$\et \colon B\sA \rtarr (\bK\sA)_0$
whose composition with
$(\bK\sA)_0\rtarr \OM (\bK\sA)_1$ gives a group completion.

Since $\sS$ is closed symmetric monoidal under the smash product, 
it makes sense to enrich categories in $\sS$. Our preferred version of 
spectral categories is categories enriched in $\sS$, abbreviated $\sS$-categories.  Model theoretically,
$\sS$ is a particularly nice enriching category since its unit $S$ is cofibrant in the stable model 
structure and $\sS$ satisfies the monoid axiom \cite[12.5]{MMSS}.  

When a spectral category $\sD$ is used as the domain category of a presheaf category, 
the objects and maps of the underlying category are unimportant. The important data are the 
morphism spectra $\sD(A,B)$, the unit maps $S\rtarr \sD(A,A)$, and the composition maps
\[\sD(B,C)\sma \sD(A,B)\rtarr \sD(A,C). \]  
The presheaves $\sD^{op}\rtarr \sS$ can be thought of as (right) $\sD$-modules.

Recall that an object $a$ in a permutative category $\sA$ 
determines a point of $B\sA$ hence, via $\et$, {a point}  of $(\bK\sA)_0$.  Therefore each $a\in \sA$ 
determines a map $S\rtarr \bK\sA$.
We will use this to specify unit maps for spectral categories.

\begin{defn}\label{KGE}  We define a spectral category $G\sA$. Its objects are the finite 
$G$-sets $A$, which may be viewed as the spectra $\bK G\sE(A)$.  Its morphism spectra 
are defined by
$G\sA(A,B) = \bK\GEone(A,B)$,
where $\GEone(A,B)$ is defined in \autoref{span1b}. Its unit maps
$S\rtarr G\sA(A,A)$ are induced by the {identity 1-cells}
in  $\GEone(A,A)$, and its composition
\[G\sA(B,C)\sma G\sA(A,B)\rtarr G\sA(A,C) \]  
is induced by composition in $\GEone$.
\end{defn}

As written, the definition makes little sense: to make the word ``induced'' meaningful 
requires a suitably behaved machine $\bK$, as we will spell out 
in \autoref{GMSum}. For the purpose of \autoref{KGE}, the machine of \cite{EM} would 
be sufficient, although it takes values in symmetric  rather than orthogonal spectra. 
However, the proof of our main theorem, given in \autoref{PROOF}, will use the 
equivariant machine of \cite{GMMOMain}, and we will therefore use the same 
machine to make sense of \autoref{KGE}. 
Once this is done, we will have the presheaf category $\mathbf{Pre}(G\sA,\sS)$ 
of $\sS$-functors $(G\sA)^{op}\rtarr \sS$  and $\sS$-natural transformations. As 
shown for example in \cite{GM0},
it is a cofibrantly generated model category enriched in $\sS$, or an $\sS$-model category for short.
As shown in \cite{MM}, the category $G\sS$  of (genuine) orthogonal $G$-spectra is also an $\sS$-model
category. Our main theorem can be restated as follows.

\begin{thm}[Main theorem]\label{MAINNew1}  There is a zigzag of enriched Quillen equivalences 
connecting the $\sS$-model categories $G\sS$ and $\mathbf{Pre}(G\sA,\sS)$.
\end{thm}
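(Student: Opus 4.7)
The plan is to combine two ingredients. The first is the general recognition theorem of Schwede--Shipley, reproven in \cite{GM0}: any stable model category is enriched Quillen equivalent to the category of spectrally enriched presheaves on the full $\sS$-subcategory of bifibrant replacements of any chosen set of compact generators. Applied to $G\sS$ with the generators $\SI^{\infty}_G(A_+)$, $A$ a finite $G$-set---whose generating property is behind \myref{hostart}---this produces an enriched Quillen equivalence
\[ G\sS \;\simeq\; \mathbf{Pre}(G\sD, \sS), \]
where $G\sD$ denotes the full $\sS$-subcategory of $G\sS$ on the objects $\SI^{\infty}_G(A_+)$. It therefore suffices to produce a zigzag of Dwyer--Kan equivalences of small $\sS$-categories $G\sB \simeq G\sD$, since any such DK-equivalence induces an enriched Quillen equivalence on presheaf categories (again by \cite{GM0}).

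The second, and substantive, ingredient is to build this zigzag via equivariant infinite loop space theory. One first constructs the $G$-equivariant enhancement $\sE_G$ of $G\sE$: a ``category enriched in permutative $G$-categories'' whose hom $G$-category $\sE_G(A,B)$ refines $G\sE(A,B)=G\sE(B\times A)$ and whose $G$-fixed subcategories return $G\sE$. Applying the equivariant infinite loop space machine $\bK_G$ objectwise to $\sE_G$, the properties axiomatized in \S\ref{GMSum} (and supplied by \cite{GM3}) produce a spectral $G$-category $\sB_G$ whose morphism $G$-spectra $\bK_G\sE_G(A,B)$ compose via the pullback pairings of $\sE_G$. The equivariant Barratt--Priddy--Quillen theorem identifies $\bK_G\sE_G(A) \simeq \SI^{\infty}_G(A_+)$, and more generally $\bK_G\sE_G(B\times A)\simeq \SI^{\infty}_G((B\times A)_+)$. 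Self-duality of $\SI^{\infty}_G(A_+)$, proved categorically in \S\ref{selfish}, then identifies the $G$-fixed points of the latter with $F_G(\SI^{\infty}_G(A_+),\SI^{\infty}_G(B_+))^G = G\sD(A,B)$. On the other side, the defining identity $G\sB(A,B) = \bK G\sE(B\times A) = \bK\bigl(\sE_G(B\times A)^G\bigr)$ is compared with $\bigl(\bK_G\sE_G(B\times A)\bigr)^G$ via the standard relation between the nonequivariant machine $\bK$ applied to fixed points and the $G$-fixed points of the equivariant machine $\bK_G$.

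The main obstacle is not the comparison of individual morphism spectra but securing the comparison at the level of $\sS$-categories: the machines $\bK$ and $\bK_G$ must be shown to carry the pullback pairings of $G$-spans to strictly associative and unital smash-product composition on spectra, and this composition must be simultaneously compatible with the BPQ equivalence and with the self-duality identification. This multiplicative coherence is the content of the black box \S\ref{GMSum}, supplied by \cite{GM3,MMO}. Granted it, the two comparisons above assemble into a zigzag of $\sS$-functors
\[ G\sB \;\longleftarrow\; \text{($G$-fixed spectral category of $\sB_G$)} \;\longrightarrow\; G\sD \]
which are the identity on objects and weak equivalences on each morphism spectrum, hence DK-equivalences. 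Applying $\mathbf{Pre}(-,\sS)$ and composing with the Schwede--Shipley step yields the asserted zigzag of enriched Quillen equivalences between $G\sS$ and $\mathbf{Pre}(G\sB,\sS)$.
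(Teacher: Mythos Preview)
Your overall strategy matches the paper's: reduce to comparing $G\sB$ with $G\sD$ via the Schwede--Shipley/\cite{GM0} step, then use the equivariant BPQ theorem together with self-duality of $\SI^{\infty}_G(A_+)$ to compare morphism spectra. However, the specific zigzag you write down,
\[ G\sB \;\longleftarrow\; (\sB_G)^G \;\longrightarrow\; G\sD, \]
does not exist as stated. First, the comparison map $\iota$ of \myref{start} points the other way, $G\sB\to(\sB_G)^G$; that is cosmetic. The real issue is the right-hand arrow: there is no strict $\sS_G$-functor $\sB_G\to\sD_G$. The BPQ equivalence $\alpha$ and the duality equivalence $\delta$ of \myref{DELTA} are maps of $G$-spectra, and \myref{curious} shows they are compatible with composition only up to inverting $\alpha$ in $\mathrm{Ho}\,G\sS$. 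Promoting this to a point-set $\sS_G$-functor is precisely what is \emph{not} supplied by the black box of \S\ref{GMSum}; that black box only yields the pairings $\sma$ and the diagrams (\ref{accept1})--(\ref{accept3}).

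The paper closes this gap in \S\ref{PROOF} by a model-categorical maneuver you omit: one takes a cofibrant approximation $\lambda\colon Q\sB_G\to\sB_G$ and a fibrant approximation $\rho\colon Q\sB_G\to RQ\sB_G$ in the model structure on $\sS_G$-categories with fixed object set, and then \emph{chooses} the objects of $\sD_G$ to be the bifibrant $RQ\sB_G(A)$. With this choice there is an honest $\sS_G$-functor $\gamma\colon RQ\sB_G\to\sD_G$ (adjoint to composition), and the zigzag is
\[ \sB_G \;\xleftarrow{\ \lambda\ }\; Q\sB_G \;\xrightarrow{\ \rho\ }\; RQ\sB_G \;\xrightarrow{\ \gamma\ }\; \sD_G. \]
Proving $\gamma$ is a weak equivalence is then a diagram chase that feeds in \myref{curious} and \myref{DELTA} at the key step. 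Your sketch has the right ingredients but skips this construction; without it, the asserted single arrow $(\sB_G)^G\to G\sD$ is only a homotopy-level identification, not a map of spectral categories.
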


Therefore $G$-spectra can be thought of as constructed from the very elementary category $G\sE$
enriched in permutative categories, ordinary nonequivariant spectra,
and the black box of infinite loop space theory.  

We have chosen to take all finite $G$-sets $A$ as the objects of $G\sA$.  As we discuss in  \autoref{MAINOne}, \autoref{MAINNew1} holds just as well if we allow $A$ to instead range only over the orbits $G/H$ for subgroups $H\subset G$ (or even over one $H$ in each conjugacy class).  
As discussed in \autoref{bigboy}, this can be viewed as a consequence of the fact that the spectral enrichment forces additivity.
Intuitively,  a $G$-spectrum is then described by its fixed point spectra $X^H$, together with enriched restriction and transfer data.  A bit more precisely, let $\sO_G$ denote the category of orbits $G/H$ and $G$-maps between them.  For a $G$-spectrum $X$, passage to fixed point spectra specifies a contravariant functor  ${X^{(-)}\colon} \sO_G \rtarr \sS$.  The following reassuring result falls out of the proof of \autoref{MAINNew1}.  We shall be more precise about this in \autoref{reassure2}.

\begin{cor}\label{reassure}  The zigzag of equivalences induces a natural zigzag of equivalences 
between the fixed point orbit functor, 
$X\mapsto \{G/H\mapsto X^{H}\}$,
on $G$-spectra and the functor given by restricting presheaves $G\sA \rtarr \sS$ to {the (unenriched) orbit category}.
\end{cor}

Thus, if $X$ is a fibrant $G$-spectrum that corresponds to the presheaf $Y$, then $X^H$ is equivalent to $Y(G/H)$.

\begin{rem} For any $n$, the homotopy groups $\pi_n(X^H)$ define a Mackey functor, and so do 
the homotopy groups $\pi_n(Y(G/H))$.   The corollary implies an isomorphism between these Mackey
functors.
\end{rem}

We view \autoref{MAINNew1} as a $G$-spectrum analog of the standard equivalence between  $G$-spaces and space-valued presheaves on $\sO_G$
 (e.g. \cite[Chapter VI]{EHCT}).   As there, we do not in any sense view the theorem as giving a {\em replacement} for the category of $G$-spectra.  We regard $G$-spectra as natural objects of intrinsic interest, and their presheaf descriptions as an illuminating perspective.   We give some comparisons of functors to illustrate this in the brief \autoref{sec:Comparisons}.

\subsection{The $G$-bicategory $\sE_G$ of spans: intuitive definition}
\label{IntuitiveEG}
 
Everything we do depends on 
first working equivariantly and then passing to fixed points. 
We fix some generic notations. For a category $\sC$, let $G\sC$ be the category of 
$G$-objects in $\sC$ and $G$-maps between them. Let $\sC_G$ be the $G$-category of $G$-objects 
and nonequivariant maps, with $G$ acting on morphisms by conjugation. The two categories are related conceptually 
by $G\sC = (\sC_G)^G$. The objects, being $G$-objects, are already $G$-fixed; we apply the $G$-fixed point 
functor to hom sets. The reader may prefer to think of $\sC_G$ as a category enriched in $G$-categories, with enriched hom objects the $G$-categories $\sC_G(A,B)$ for $G$-objects $A$ and $B$.

We apply this framework to the category of finite $G$-sets.  We have already 
defined the $G$-fixed bicategory $G\sE$, and we shall give two definitions of 
$G$-bicategories $\sE_G$ with fixed point bicategories equivalent to $G\sE$.  The
first, given in this section, is more intuitive, but the second is 
more convenient for the proof of our main theorem. 

Let $U$ be a countable $G$-set that contains all orbit types $G/H$ infinitely many times.
Again let $A$, $B$, and $C$ denote finite $G$-sets, but now think of the $D$, $E$, and $F$ of 
(\ref{spans}) and (\ref{pbdiag}) as finite subsets of {the} $G$-set $U$; these subsets need {\em not} be $G$-subsets.
The action of $G$ on $U$ gives rise to an action of $G$ on the finite subsets of $U$: for
a finite subset $D$ of $U$ and $g\in G$, $gD$ is another finite subset of $U$.

\begin{defn} We define a $G$-{groupoid}
$\sE_G^U(A)$.
The objects of $\sE_G^U(A)$ are the 
nonequivariant maps $p\colon D\rtarr A$, where $A$ is a finite $G$-set and $D$ is
a finite subset of $U$.  The morphisms $f\colon p\rtarr q$, $q\colon E\rtarr A$, 
are the bijections $f\colon D\rtarr E$ such that $q\com f=p$.  The group $G$ acts 
on objects and morphisms by sending $D$ to $gD$ and sending a bijection 
$f\colon D\rtarr E$ over $A$ to the bijection $gf\colon gD\rtarr gE$ over $A$ given by $(gf)(gd) = g(f(d))$. 
\end{defn}

\begin{defn} We define a bicategory $\sE_G^U$ with objects the finite $G$-sets and 
with $G$-{groupoids}  
of morphisms between objects given by 
$\sE_G^U(A,B) = \sE_G^U(B\times A)$. Thinking of the objects of $\sE_G^U(A,B)$ as 
nonequivariant spans $B\ltarr D \rtarr A$, composition and units are 
defined as in  \autoref{span1a}.
\end{defn}

Observe that taking disjoint unions of finite sets over $A$ will not keep us in $U$
and is thus not well-defined. Therefore the $\sE_G^U(A)$ are not even symmetric monoidal 
(let alone permutative) $G$-categories in the naive sense of symmetric monoidal 
categories with $G$ acting compatibly on all data.  

\subsection{The $G$-bicategory $\sE_G$ of spans: working definition}

We shall work with a less intuitive definition of $\sE_G$, one that solves the problem of disjoint 
unions by avoiding any explicit use of them.  It uses an especially convenient $E_{\infty}$ 
operad of $G$-categories, denoted $\sP_G$. We recall it from  \cite{GM3},
where we define a genuine permutative $G$-category to be an algebra over $\sP_G$.  
More generally, in \cite{GMMO2} we define a genuine symmetric monoidal $G$-category to be a pseudoalgebra over $\sP_G$, but we will not  need that notion here.  Such pseudoalgebras provide input for an equivariant infinite loop space machine.

To define $\sP_G$, we apply our general point of view on equivariant categories to 
the category $\sC\!at$ of small categories.  Thus, for $G$-categories $\sA$ and $\sB$, 
let $\sC\!at_G(\sA,\sB)$ be the $G$-category of functors $\sA\rtarr \sB$ and natural 
transformations, with $G$ acting by conjugation,  and let $G\sC\!at(\sA,\sB)= C\!at_G(\sA,\sB)^G$ 
be the category of $G$-functors and $G$-natural transformations.

\begin{defn} Let $\tG$ 
be the groupoid\footnote{While $\tG$ is 
isomorphic as a $G$-category to the translation category of $G$, the action of $G$ on that category 
is defined differently, as is explained in \cite[Proposition 1.8]{GMM}.  Our $\tG$ is the chaotic category of
$G$, sometimes denoted $\tilde{G}$.}  with object set $G$ and a unique morphism, denoted $(h,k)$, from $k$ to $h$ for each pair of objects. Let $G$ act from the right on $\tG$ by $h\cdot g = hg$ on objects
and $(h,k)\cdot g = (hg,kg)$ on morphisms.  
Define $\sP(j) = \tSI_j$; this is the $j$th category of an $E_{\infty}$
operad of categories whose algebras are the permutative categories \cites{Dunn, MayPerm}.
Define $\sP_G(j)$ to be the $G$-category 
\[ \sP_G(j) = \sC\!at_G(\tG,\tSI_j) = \sC\! at_G(\tG,\sP(j) ). \]  
Here $G$ acts trivially on $\tSI_j$.  The left action of $G$ on $\sP_G(j)$ is 
induced by the right action of $G$ on $\tG$, and 
the right action of $\SI_j$ {is} induced 
by the right action of $\SI_j$ on $\tSI_j$.  The functor $\sC\!at_G(\tG,-)$ is product preserving
and the operad structure maps are induced from those of $\sP$. We interpret $\sP(0)$ and $\sP_G(0)$ to be trivial
categories; $\sP_G(1)$ is also trivial, with unique object denoted $\oid$.
\end{defn}  

\begin{defn}\label{OOO} Regard a finite $G$-set $A$ as a discrete $G$-{groupoid}  
(identity morphisms only).
Define the $G$-groupoid 
$\sE_G(A)$ by
\begin{equation}\label{yeah}
\sE_G(A) = \coprod_{n\geq 0} \sP_G(n)\times_{\SI_n} A^n = (\coprod_{n\geq 1} \sP_G(n)\times_{\SI_n} A^n)_+.
\end{equation}
We interpret the term with $n=0$ to be a trivial base category $\ast$, which explains the second equality, and we identify 
the term with $n=1$ with $A$.  
\end{defn}

In the language of \cite[Definition~4.5]{GM3}, $\sE_G(A)$ is the free genuine permutative $G$-groupoid generated by the $G$-set $A$;
its unit can be thought of as given by a disjoint trivial base category implicitly added to $A$. This is made precise by \autoref{yeah2}.

The following result is neither obvious nor difficult. It is proven in \cite{GM3}, where it is one
ingredient in a categorical proof of the tom Dieck splitting theorem.

\begin{thm}[{\cite[Theorem 5.9]{GM3}}]
\label{FixIso}  The $G$-fixed permutative groupoid 
$\sE_G(A)^G$ is naturally isomorphic 
to the permutative  groupoid 
 $G\sE(A)$ {of \autoref{PermCat}.}
\end{thm}

The starting point of the proof is the observation that a functor $\tG\rtarr \tSI_n$ is uniquely 
determined by its object function $G\rtarr \SI_n$. In particular, for a finite $G$-set {$B=\ub{n}^\be$} 
we may view the {group homomorphism $\be\colon G\rtarr \SI_n$ as an object of the category $\sP_G(n)$}.
With a little care, we see that a $G$-fixed object $(\be;a_1,\cdots,a_n)$ of $\sP_G(n)\times_{\SI_n} A^n$ can 
be interpreted as a $G$-map $B\rtarr A$ and that all finite $G$-sets over $A$ are of this form. 

\begin{rem}\label{OOO2}  Conceptually, \autoref{OOO} hides an important identification and extension of functoriality 
that will be used crucially in \autoref{subtlety}.   A priori, $\sE_G$ is a functor on {\em unbased} finite $G$-sets, but an alternative reformulation is
\begin{equation}\label{yeah2} 
\sE_G(A) = \bP_G(A_+), 
\end{equation}
where $\bP_G$ is the monad on the category 
of {\em based} $G$-categories, not just $G$-groupoids, whose algebras are the same as the $\sP_G$-algebras. Thus \autoref{yeah2} exhibits $\sE_G$ as a special case of the more general functor  $\bP_G$.  With this reinterpretation, $\sE_G(A)$ extends to a functor on all based finite $G$-sets 
and  all based $G$-maps, not just those of the form $f_+$.
\end{rem}

We need to be more precise about this identification and extended functoriality. 

\begin{defn}\label{ident}  
Let $\LA$ be the category of finite based sets ${\bn}$ and based injections.\footnote{{The category $\LA$ is isomorphic to the category of finite (unbased) sets and injections. We use based here both for historical reasons and because it fits better into the machinery of infinite loop space theory.}}  
For a finite based $G$-set $\cA$, regarded as a discrete based $G$-category, 
insertion of basepoints makes the powers $\cA^n$ into a covariant functor $\cA^{\bullet}$ from $\LA$ to based $G$-categories.
Then $\bP_G(\cA)$ is the categorical tensor product 
\[ \bP_G(\cA) = \sP_G(\bullet)\otimes_{\LA} \cA^{\bullet}. \]
Since any based injection $\si\in \LA(\bm,\bn)$ can be written uniquely as the composition of a permutation of $\bm$ followed by an order-preserving injection, the contravariant functoriality of $\sP_G(\bullet)$ on based injections is given by combining the right $\SI_m$-action on $\sP_G(m)$ with the contravariant functoriality with regards to ordered injections described in \cite[2.3]{MayGeo}.
Thus
\begin{equation}\label{yeah3yack}  
\bP_G(\cA) =  \Big(\coprod_{n\geq 0} \sP_G(n)\times \cA^n \Big)/(\sim), 
\end{equation}
where 
\[(\si^*\mu; a)\sim (\mu;\si_* a) \quad \text{for} \quad \mu\in \sP_G(n),\quad \si\in \LA(\bm,\bn),\quad\text{and}\quad a\in \cA^m.\]
As in \cite[2.3]{MayGeo}, we can first pass to orbits using the permutations in $\LA$ and then use the equivalence relation induced by the proper injections to rewrite this as
\begin{equation}\label{yeah3}  
\bP_G(\cA) =  \Big(\coprod_{n\geq 0} \sP_G(n)\times_{\SI_n} \cA^n \Big)/(\sim), 
\end{equation}
thus highlighting the comparison with \autoref{yeah}.
\end{defn}

\begin{defn}\label{subtlety}
 For a based $G$-map $f\colon A_+\rtarr B_+$, define a functor 
\[ f_!\colon \sE_G(A)\rtarr \sE_G(B) \]
using the identification \autoref{yeah2} and the functoriality of $\bP_G$ on based maps.\footnote{With the intuitive version of $\sE_G$ described in \autoref{IntuitiveEG},
 $f_!\colon \sE_G(A)\rtarr \sE_G(B)$ is then just the pushforward functor obtained by composing maps over $A$ with $f$.}
In the case that $f^{-1}(\ast) = \ast$, so that $f$ is in the image of the disjoint basepoint functor $X\mapsto X_+ $, the functor $f_!$ is given by the disjoint union over $n$ of the functors 
\[ \sP_G(n) \times_{\SI_n} A^n\xrightarrow{\id\times_{\SI_n} f^n} \sP_G(n) \times_{\SI_n} B^n.\]
If $i\colon A\rtarr B$ is an injection of unbased 
finite $G$-sets, define an associated retraction $r\colon B_+\rtarr A_+$ of based finite $G$-sets by setting $ri(a) = a$ 
and $r(b) = \ast$ if $b\notin \im(i)$. Then define
\[ i^* = r_!\colon \sE_G(B) \rtarr \sE_G(A).\footnote{With the intuitive version of $\sE_G$ described in \autoref{IntuitiveEG},
$i^*\colon \sE_G(B)\rtarr \sE_G(A)$ is just the functor obtained by using $i$ to pull back maps over $B$ to maps over $A$.} \]
By \autoref{transfer} below, we may think of $i^*$ as the dual of $i$.
\end{defn}

The following definition gives the $G$-category analogue of \autoref{span1a}.  It specifies a $G$-category (almost) 
``enriched in permutative $G$-categories''.

\begin{defn}\label{compcat}
We define a $G$-bicategory\footnote{{As in \autoref{unitprob}, the bicategory $\sE_G$ is almost a $2$-category. It is just missing  strict units, as we shall explain shortly.}} 
$\sE_G$ with a permutative $G$-groupoid hom object for each pair of objects as follows.
The $0$-cells of $\sE_G$ are the finite $G$-sets $A$, which may be thought of as the $G$-categories $\sE_G(A)$.  The 
permutative $G$-groupoid 
 $\sE_G(A,B)$ of $1$-cells and $2$-cells $A\rtarr B$ is $\sE_G(B\times A)$, as defined in \autoref{OOO}.  
The composition 
\[ \circ\colon \sE_G(B,C)\times \sE_G(A,B)\rtarr \sE_G(A,C) \]
is given by the {diagram \eqref{compsquare}.} 
Its first map $\om$ is a pairing of free $\sP_G$-algebras that will be made precise
in \autoref{omega}. Its second and third maps implement composition from the point of view of \autoref{pbdiagtoo}. They are specializations of the contravariant functoriality of $\sE_G$
on injections and its covariant functoriality on surjections, as is made precise in \autoref{subtlety}.
\begin{equation}\label{compsquare}
 \xymatrix @C=5em {
\sE_G(C\times B)\sma \sE_G(B\times A)\ar[d]_{\om} \ar@{-->}[r]^-{\circ} & 
\sE_G(C\times A).\\
 \sE_G(C\times B\times B\times A) \ar[r]^-{(\id\times \DE\times\id)^*} & 
\sE_G(C\times B\times A) \ar[u]_{\pi_!}
} \end{equation}
This composition is strictly associative, as we indicate in  
\autoref{leftright}.  With {$A = \ub{n}^{\al},$} $\sE_G(A,A)$ has a pseudo-unit $1$ cell  
\begin{equation}\label{unitA}
\DE_A = (\al;\DE_A) \in \sE_G(A\times A) = \sP_G(n)\times_{\SI_n} (A\times A)^n
\end{equation}
where 
\[\DE_A = \big((1,1),\cdots,(n,n)\big) \in (A\times A)^n.\]
It is a strict right unit, but it is not a strict left unit (see \autoref{leftright}). 
\end{defn}

To rectify to obtain a strict unit, we need whiskered $G$-categories $\EGone$ analogous to the whiskered categories $\GEone$, 
and we define them in  \autoref{Whisker}.  They are defined in such a way that \autoref{FixIso} has the following 
corollary by direct comparison of definitions.

\begin{cor}\label{FixIso2} The $G$-fixed category $(\EGone)^G$ enriched in permutative 
categories is isomorphic to the category $\GEone$ enriched in permutative categories.
\end{cor}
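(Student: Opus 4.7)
The plan is to check the structural data of the two categories in turn, letting \myref{FixIso} do the bulk of the work on hom permutative categories. The $0$-cells match on the nose: both are finite $G$-sets. For the hom permutative categories, applying $(-)^G$ to $\sE_G(A,B) = \sE_G(B\times A)$ and invoking \myref{FixIso} yields a natural isomorphism of permutative categories $\sE_G(A,B)^G\cong G\sE(B\times A) = G\sE(A,B)$. All that remains is to verify that units and composition transport correctly under this identification.

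The unit check follows immediately from the description of the isomorphism sketched after \myref{FixIso}: for $A=(\mathbf{n},\al)$, the unit object $(\al;(1,1),\ldots,(n,n))$ of $\sO_G(n)\times_{\SI_n}(A\times A)^n$ corresponds to the $G$-map $A\rtarr A\times A$ whose object function is the diagonal, i.e., to the identity $1$-cell of $G\sE(A,A)$. For composition, I would trace a pair of $G$-fixed objects, corresponding under \myref{FixIso} to spans $C\ltarr E\rtarr B$ and $B\ltarr D\rtarr A$, through the three-step composite of \myref{compcat}. On fixed points, $\om$ sends this pair to $E\times D$, regarded as a finite $G$-set over $C\times B\times B\times A$ via the evident product map. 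The inclusion-induced map $(\id\times\DE\times\id)^*$, unwound through \myref{subtlety}, picks out the sub-$G$-set $F\subset E\times D$ consisting of pairs $(e,d)$ lying over the same point of $B$, now viewed as a $G$-set over $C\times B\times A$. Finally $\pi_!$ composes with the projection to $C\times A$. This is exactly the pullback composition of \myref{span1}.

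The main obstacle is ensuring that the unbased-versus-based bookkeeping encoded in \myref{subtlety} cooperates with the identification of \myref{FixIso}. One must verify that $(\id\times\DE\times\id)^*$ on fixed objects really implements restriction along the diagonal inclusion, using the definition $i^* = r_!$ for the associated retraction $r$, and that the iterated degeneracies $\si_i$ in $\sO_G$ delete precisely those summand entries lying off the diagonal in $B\times B$; this is the same style of calculation already carried out in the verification of the unit law for $\sE_G$ just before the corollary. Once this bookkeeping is confirmed, the isomorphism of categories enriched in permutative categories assembles from the pieces, and the associativity and unit verifications already sketched transfer without change.
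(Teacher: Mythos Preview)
Your proposal is correct and takes essentially the same approach as the paper, which simply asserts that the result follows from \myref{FixIso} ``by direct comparison of definitions'' and gives no further argument. You have spelled out in detail exactly what that comparison entails---matching $0$-cells, invoking \myref{FixIso} on hom categories, and checking that units and the three-step composition of \myref{compcat} correspond to the diagonal span and pullback composition of \myref{span1}---which is precisely the content the paper leaves implicit.
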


In \autoref{omega} we will give an ad hoc definition of the pairing $\om$  that is required in \autoref{compcat}.  
We  place $\om$ 
in a general multicategorical context in \cite[Definition~3.20]{GMMOMain}.  We first comment on its domain; compare \autoref{tenprod}.

\begin{rem} We can define the smash product of based $G$-categories in the same way as the 
smash product of based $G$-spaces (see \cite[Lemma~4.20]{EM}).  We are most interested in examples of the form $\sA_+$ and
$\sB_+$ for unbased $G$-categories $\sA$ and $\sB$, and then $\sA_+\sma \sB_+$ can be identified with $(\sA\times \sB)_+$.  
Therefore
\small{
\begin{equation}\label{EGAEGB_reorder}
\xymatrix @R=1em{ 
 \sE_G(A) \sma \sE_G(B) =
\Big(\coprod_{m\geq 1} \sP_G(m)\times_{\SI_m} A^m\Big)_+\sma \Big(\coprod_{n\geq 1} \sP_G(n)\times_{\SI_n} B^n \Big)_+
\ar[d]^\iso \\
 \Big(\coprod_{m\geq 1,n\geq 1} \sP_G(m)\times_{\SI_m} A^m \times \sP_G(n) \times_{\SI_n} B^n \Big)_+  \ar[d]^{\iso}\\
  \Big(\coprod_{m\geq 1,n\geq 1} \sP_G(m)\times\sP_G(n)\times_{\SI_m\times \SI_n} A^m\times B^n \Big)_+ \\} 
 \end{equation}
 }
Note that this smash product does not have a $\sP_G$-algebra structure.
\end{rem}

\begin{defn}\label{omega} 
The homomorphism $\otimes:\SI_m\times\SI_n\rtarr \SI_{mn}$ defined using lexicographic ordering in \autoref{Biperm} is the object function of a functor 
\[\tSI_m\times\tSI_n\rtarr \tSI_{mn}.\]
Applying the functor $\sC\!at_G(\tG,-)$, we obtain pairings  
\[\otimes\colon \sP_G(m) \times \sP_G(n)\rtarr \sP_G(mn);\]
on objects of $\tG$, $(\mu\otimes \nu)(g) = \mu(g)\otimes \nu(g)$.
{For $G$-sets $A$ and $B$, we have the injection 
\[\boxtimes:A^m\times B^n\rtarr (A\times B)^{mn}\]
that sends $(a_1,\cdots,a_m)\times (b_1,\cdots, b_n)$ to the set of pairs $(a_i,b_j)$, ordered lexicographically.}
Combining, there result functors
\[  \om_{m,n}\colon (\sP_G(m)\times_{\SI_m} A^m)\times (\sP_G(n)\times_{\SI_n} B^n)\rtarr \sP_G(mn)\times_{\SI_{mn}} (A\times B)^{mn}, \]
\[ \om_{m,n}\big( (\mu,a),(\nu,b)\big) = (\mu\otimes \nu,a\boxtimes b).\]
Using the description \autoref{EGAEGB_reorder}, the functors $\om_{m,n}$
 specify pairings of $G$-categories
\[ \om \colon \sE_G(A) \sma \sE_G(B) \rtarr \sE_G(A\times B). \]
\end{defn}

While $\sE_G(A) \sma \sE_G(B)$ is not a $\sP_G$-algebra, we show in \cite[Proposition~3.25]{GMMOMain} that $\om$ is an example of a bilinear, or 2-ary, morphism in the multicategory of $\sP_G$-algebras. The machine of \cite{GMMOMain} then produces from this bilinear map a pairing of $G$-spectra, as we will discuss in \autoref{GMSum} below.

\begin{rem}\label{leftright}
The associativity of the composition $\circ$ defined in \autoref{compcat} is an easy diagram chase, starting from
the associativity of the pairing on $\sP_G$.  We illustrate how \autoref{subtlety} works by considering composites
with the pseudo-unit objects $\DE_A$.  Let $E$ be a $1$-cell in $\sE_G(A,B)$ and choose an object
\[(\mu;(b_1,a_1),\cdots,(b_m,a_m)) \ \ \text{of} \ \   \sP_G(m)\times (B\times A)^m\]
in the $\SI_m$-orbit $E$.

We first prove that $E\com \DE_A = E$.  Suppose that {$A = \ub{n}^\al$.} Then the object 
\[ \big{(} \mu\otimes\al; ((b_i,a_i,j,j))\big{)} \ \ \text{of} \ \  \sP_G(mn)\times (B\times A\times A\times A)^{mn} \]  
is in the $\SI_{mn}$-orbit $\om(E,\DE_A)$. The ordering of the four-tuples is lexicographic on  $i$ and $j$.  The four-tuple 
$(b_i,a_i,j,j)$ is in the image of $\id\times\DE\times \id$ if and only if $a_i = j$.  The retraction
corresponding to this 
injection maps 
such a $(b_i,a_i,j,j)$ to $(b_i,a_i,j)=(b_i,a_i,a_i)$ and
all other $(b_i,a_i,j,j)$ to the basepoint. Applying $\pi_!$  we arrive at 
\[\si_*((b_1,a_1),\cdots, (b_m,a_m)) \in (B\times A)^{mn},\]
where  $\si\colon \bf{m} \rtarr \bf{mn}$ is the ordered injection that sends $i$ to $\la^{-1}_{m,n}(i,a_i)$.   Therefore
\[ E\circ \DE_A = (\mu\otimes \al; \si_*((b_1,a_1),\cdots, (b_m,a_m))) = (\si^*(\mu\otimes \al); (b_1,a_1),\cdots, (b_m,a_m)). \]
Since $\si^*$ reverses the lexicographic ordering used to define $\mu\otimes \al$, we have the relation   $\si^*(\mu\otimes \al) = \mu$.

Now let {$B = \ub{p}^\be$} and consider $\DE_B \com E$.  Then the object
\[ \big{(} \be\otimes \mu; (k,k,b_i,a_i))\big{)} \ \ \text{of}  \ \ \sP_G(pm)\times (B\times B\times B\times A)^{pm}\]  
is in the $\SI_{pm}$-orbit $\om(\DE_B,E)$. The ordering of the four-tuples is lexicographic on  $k$ and $i$.  The four-tuple 
$(k,k,b_i,a_i)$ is in the image of $\id\times\DE\times \id$ if and only if $k=b_i$.  The retraction corresponding to this 
injection maps all other $(k,k,b_i,a_i)$ to the basepoint. Applying $\pi_!$  we arrive at
\[\ta_*((b_1,a_1),\cdots, (b_m,a_m)) \in (B\times A)^{pm},\]
where $\ta\colon \bf{m}\rtarr \bf{pm}$ is the injection that sends $i$ to $\la^{-1}_{p,m}(b_i,i)$.  
We have
\[ \DE_B\circ E =(\be\otimes \mu, \ta_*((b_1,a_1),\cdots, (b_m,a_m))= (\ta^*(\be\otimes \mu); (b_1,a_1),\cdots, (b_m,a_m)), \]
but the injection $\ta$ is not ordered, and
$\ta^*(\be\otimes \mu)$ is not equal to $\mu$.  
We define 
\begin{equation}
\ell_{B,E}:\DE_B\circ E \rtarr E
\label{laDeG}\end{equation} 
to be the 2-cell induced by the (unique) morphism $\tau^*(\be\otimes\mu) \rtarr \mu$ in $\sP_G(m)$.
The structure $\sE_G$ is only a bicategory, while $\EGone$ defined in \autoref{Whisker} is a strict 2-category. 
The inclusion $\sE_G \rtarr \EGone$ is a pseudofunctor with unit constraint given by $\ze$ of \autoref{def:whisker}.
In \cite{GMMOMain}, 
the category of $\sP_G$-algebras is  
given the structure of a multicategory.
The composition functors in both $\sE_G$ and $\EGone$ are examples of bilinear maps in the
multicategorical sense.
\end{rem}

\subsection{The categorical duality maps}\label{duality}

Since various specializations are central to our work, we briefly recall how duality 
works categorically, following \cite[III\S1]{LMS} for example.  We then define maps
of $\sP_G$-algebras that will lead in \autoref{selfish} to the proof that the objects of $G\sA$ are self-dual. 

Let $\sV$ be a closed symmetric monoidal category with product $\sma$, unit $S$, 
and hom objects $F(X,Y)$; write $DX = F(X,S)$.  A pair of objects $(X,Y)$ in $\sV$ 
is a dual pair if there are maps $\et\colon S\rtarr X\sma Y$ and $\epz\colon Y\sma X\rtarr S$
such that the composites
\[  \xymatrix@1{ 
X \iso S \sma X \ar[rr]^-{\et\sma \id} && X\sma Y\sma X \ar[rr]^-{\id\sma \epz} 
&& X\sma S \iso X } \]
\[  \xymatrix@1{ Y\iso Y\sma S \ar[rr]^-{\id \sma \et} && Y\sma X\sma Y \ar[rr]^-{\epz\sma \id} 
&& S\sma Y \iso Y } \]
are identity maps.  For any such pair, the adjoint $\tilde{\epz}\colon Y\rtarr DX$ of $\epz$ 
is an isomorphism.  When $(X,Y)$ and $(X',Y')$ are dual pairs, the dual of a map $f\colon X\rtarr X'$ 
is the composite
\begin{equation}\label{dualmap}
 \xymatrix@1{Y'\iso Y'\sma S_G \ar[r]^-{\id\sma \et} & Y'\sma X\sma Y \ar[r]^-{\id\sma f\sma \id} 
& Y'\sma X'\sma Y\ar[r]^-{\epz'\sma\id} & S_G\sma Y\iso Y. }
\end{equation}

For any pair of objects $X$ and $Z$, we  have a natural map
\begin{equation}\label{zeta} 
\ze\colon Z\sma DX = Z\sma F(X,S) \rtarr F(X,Z)
\end{equation}
in $\sV$, namely the adjoint of
\[ \id\sma \epz\colon Z\sma DX \sma X  \rtarr  Z\sma S  \iso Z, \]
where $\epz$ is the evident evaluation map. The map $\ze$ is an isomorphism
when either $X$ or $Z$ is dualizable \cite[III.1.3]{LMS}.  When $X$ is self-dual and 
$Z$ is arbitrary, we have the composite isomorphism
\begin{equation}\label{delta}
\de = \ze \com (\id\sma \tilde{\epz}) \colon Z\sma X \rtarr Z\sma DX \rtarr F(X,Z).
\end{equation}
This map in various categories will play an important role in our work.

In \autoref{epsilon} and \autoref{eta}, we will define
two maps of $\sP_G$-algebras that are central to duality and therefore to 
everything we do.  Let $S^0 = \{\ast,1\}$, where $\ast$ is the basepoint and ${1}$
is not.  We think of $S^0$ as $1_+$, where $1$ is the one-point $G$-set.  In line with
this convention, we also think of $1$ as a trivial category with object $1$.
Remember that $\sE_G(A) = \bP_G(A_+)$ is the free $\sP_G$-algebra generated by 
$A_+$, where we view finite $G$-sets as categories with only identity morphisms.

\begin{defn}\label{epsilon} For a finite $G$-set {$A=\ub{n}^\al$,} define based $G$-maps
\[ \epz\colon (A\times A)_+ \rtarr S^0, \ \ r\colon (A\times A)_+ \rtarr A_+ \ \  \text{and}\ \ \pi\colon A_+\rtarr S^0\]
by $r(a,b) = \ast$ if $a\neq b$ and $r(a,a) = a$, $\pi(a) = 1$, and 
$\epz = \pi\circ r$, so that $\epz(a,b) = \ast$ if $a\neq b$ and $\epz(a,a) = 1$.
Note that $r\com \DE_+ = \id_{A_+}$.
We agree to again write $\epz$ for the induced map of 
$\sP_G$-algebras 
\[\epz = \sE_G \epz\colon \sE_G(A\times A) \rtarr \sE_G(1). \]
\end{defn}

\begin{defn}\label{eta} For a finite $G$-set {$A=\ub{n}^\al$,} regard the object $\DE_A\in \sE_G(A\times A)$ 
as the map of $G$-categories $i_A\colon 1 \rtarr \sE_G(A\times A)$ that sends the object $1$ of the trivial category 
to the object $\DE_A$. By freeness, there results a map of $\sP_G$-algebras 
{\[\et\colon \sE_G(1) \rtarr \sE_G(A\times A).\]}
Explicitly,\footnote{{This uses that $\ga(\mu;\al^n) = \mu\otimes\al$, where $\ga\colon \sP_G(m)\times \sP_G(n)^m\rtarr \sP_G(mn)$, as explained in \cite[\S3.1]{GMMOMain}.}} 
 $\eta$ is  
 the disjoint union over $m$ of the maps 
\[\sP_G(m)\times_{\SI_m} 1^m\rtarr \sP_G(mn)\times_{\SI_{mn}} (A\times A)^{mn}\]
given by
\[ \et(\mu,1^m) = \big(\mu\otimes \al; (\DE_A)^m\big). \]

\end{defn}
The following categorical observation will lead to our proof in \autoref{selfish} that the $G$-spectra $\SI^{\infty}_G(A_+)$
are self-dual.  Since care of basepoints is crucial, we use the alternative notation $\bP_G(A_+)$. Remember
that $(A\times A)_+$ can be identified with $A_+\sma A_+$.  We identify $1_+\sma A_+$ and
$A_+\sma 1_+$ with $A_+$ at the bottom center of our diagrams. 

\begin{prop}\label{NewDual} In the diagrams below, square (1) commutes up to isomorphism, and the other three squares commute on the nose.
\[ \xymatrix{
\bP_G(A_+\sma A_+) \sma \bP_G(A_+) \ar[r]^-{\om}  \drtwocell<\omit>{<0> \ \ (1) } & \bP_G(A_+\sma A_+\sma A_+) \ar[d]^{\bP_G(\id\sma\epz)}
& \bP_G(A_+)\sma \bP_G(A_+\sma A_+) \ar[d]^{\id\sma \epz} \ar[l]_-{\om}\\
\bP_G(1_+) \sma \bP_G(A_+) \ar[r]_-{\om} \ar[u]^{\et\sma\id} &
 \bP_G(A_+) & \bP_G (A_+)\sma \bP_G(1_+) \ar[l]^-{\om} } \]

\[ \xymatrix{
\bP_G(A_+) \sma \bP_G(A_+\sma A_+) \ar[r]^-{\om} \ar@{}[dr]|{(2)} 
& \bP_G(A_+\sma A_+\sma A_+) \ar[d]^{\bP_G(\epz\sma\id)}
& \bP_G(A_+\sma A_+)\sma \bP_G(A_+) \ar[d]^{\epz\sma \id} \ar[l]_-{\om}\\
\bP_G(A_+) \sma \bP_G(1_+) \ar[r]_-{\om} \ar[u]^{\id\sma\et} &
\bP_G(A_+) & \bP_G (1_+)\sma \bP_G(A_+) \ar[l]^-{\om} } \]
\end{prop}

\begin{proof} In the right vertical arrows, $\epz$ means $\bP_G(\epz)$.  
Both right squares are naturality diagrams, so it remains to consider the squares on the left. 
The difference between squares (1) and (2) is closely analogous to the difference between
left and right composition with $\DE_A$, as explained in \autoref{leftright}.  Let $A=\ub{n}^\al$ and 
consider objects  $(\mu,1^m)$ of $\sP(m)\times 1^m$ and $(\nu,a)$ of $\sP(q)\times A^q$. We consider
square (2) first, paying close attention to the order in which variables appear.

By  Definitions \ref{omega} and \ref{eta},
\[ \om\big((\nu,a),(\mu,1^m)\big) = (\nu\otimes \mu, a\boxtimes 1^m) \ \ \text{in} \ \  \sP(qm)\times A^{qm}\]
and
\[ \om\com (\id\sma\et)\big((\nu,a),(\mu,1^m)\big) =\big(\nu\otimes \mu\otimes \al; a\boxtimes (\DE_A)^m) \ \ \text{in}\ \ \sP_G(qmn)\times_{\SI_{qmn}} (A^3)^{qmn}. \]
Identifying {$\ub{qm}$ with $\ub{q}\times \ub{m}$} lexicographically, the $(k,i)$th coordinate of $a\boxtimes 1^m$ is $a_k$.  Identifying 
{$\ub{qmn}$ with $\ub q\times \ub m\times \ub n$}
lexicographically, the $(k,j,i)$th coordinate of \linebreak
$a\boxtimes (\DE_A)^m$ is $(a_k,i,i)$.  
By \autoref{epsilon}, $\epz\sma \id$ sends this coordinate to the basepoint unless $a_k = i$, when it sends it to $i$.  Noticing the agreement
of lexicographic orderings, we see as in \autoref{leftright} that the injection $\si\colon \bf{qm} \rtarr \bf{qmn}$ such that
\[ \si_*(a\boxtimes 1^m) = (\epz\sma\id)_*(a\boxtimes (\DE_A)^m)\]
is ordered and satisfies $\si^*(\nu\otimes \mu\otimes \al) = \nu\otimes \mu$.  

Now consider square (1). By  \autoref{omega} and \autoref{eta},
\[ \om\big( (\mu,1^m),(\nu,a)\big) = \big(\mu\otimes \nu, 1^m\boxtimes a\big) \ \ \text{in} \ \ \sP(mq)\times_{\SI_{mq}} A^{mq} \]
and
\[ \om\com (\et\sma\id)\big( (\mu,1^m),(\nu,a)\big) =\big(\ga(\mu;\al^n)\otimes \nu; (\DE_A)^m\boxtimes a\big) \ \ \text{in} \ \  \sP_G(mnq)\times_{\SI_{mnq}} (A^3)^{mnq}.\]
Identifying {$\ub{mq}$ with $\ub{m}\times \ub{q}$} lexicographically, the $(i,k)$th coordinate of $1^m\boxtimes a$ is $a_k$.
Identifying {$\ub{mnq}$ with $\ub{m}\times \ub{n}\times \ub{q}$} lexicographically, the $(i,j,k)$th coordinate of $(\DE_A)^m\boxtimes a$  is $(j,j,a_k)$.  
By \autoref{epsilon}, $\id\sma \epz$ sends this coordinate to the basepoint unless $j=a_k$, when it sends it to $j$.  Here the injection $\ta\colon \bf{mq} \rtarr \bf{mnq}$
such that 
\[ \ta (1^m\boxtimes a) = (\id\sma\epz)_*((\DE_A)^m\boxtimes a )\]
is not ordered, 
and $\ta^*(\mu\otimes \al \otimes \nu)$ is not equal to $\mu\otimes \nu$ in $\sP_G(mq)$. 
As in \autoref{leftright}, there is a unique $2$-cell, necessarily an isomorphism, 
\[ \vartheta\colon (\mu\otimes \nu) \Longrightarrow \ta^*(\mu\otimes \al\otimes \nu) \]
in $\sP_G(mq)$.  As the input varies, the $2$-cells
\[ (\vartheta,\id)\colon \big(\mu\otimes \nu; 1^m\boxtimes a) \Longrightarrow \big(\ta^*(\mu\otimes \al\otimes \nu),1^m\boxtimes a\big) \]
specify the $2$-natural isomorphism  in the square (1).
\end{proof}

\section{The proof of the main theorem}
\label{sec:MainThm}

\subsection{The equivariant approach to \autoref{MAINNew1}}
\label{EquivApproachMAINNew1}

As we explain in  \cite{GMMOMain}, following \cite{GM3}, equivariant infinite loop space theory associates an 
orthogonal $G$-spectrum $\bK_G\sC_G$ to a genuine permutative (or more generally genuine symmetric monoidal) $G$-category $\sC_G$.  
The map $B\sC_G = (\bK_G \sC_G)_0 \rtarr \OM (\bK_G \sC_G)_1$ is an equivariant group completion.
\footnote{{The papers 
from around 1990, such as \cites{CostWan, Shim} are not adequate, in part because genuine permutative $G$-categories were not
explicitly defined and the group completion property was not worked out rigorously, but more substantially because a symmetric monoidal category 
of $G$-spectra had not yet been discovered.  A key feature of the version of the Segal machine \cite{GMMO1} used in our proofs is that it is given by a 
symmetric monoidal functor, a claim that would not  have made sense in 1990.}}

\begin{notn}
We denote by $G\sS$ the (closed symmetric monoidal) category of orthogonal $G$-spectra, indexed on a complete universe, and maps of such. 
A category enriched over $G\sS$ will be referred to as a $G\sS$-category.

The category $G\sS$ has two futher relevant enrichments. Using the closed structure yields a self-enrichment, which we write as $\sS_G$. Thus, for $G$-spectra $X$ and $Y$, the $G$-spectrum $\sS_G(X,Y)$ is the mapping $G$-spectrum $F_G(X,Y)$. Applying fixed points to the mapping $G$-spectra gives a $\sS$-enriched category, which we again write as $G\sS$. This parallels the discussion at the start of \autoref{IntuitiveEG}.
\end{notn}

Applying the functor $\bK_G$ to $\sE_G$ (\autoref{compcat}), we obtain the following 
equivariant analogue of \autoref{KGE}.  

\begin{defn}\label{GKGE}  We define a $G$-spectral category, 
or $G\sS$-category, $\sA_G$.  Its objects are the finite 
$G$-sets $A$, which may be viewed as the $G$-spectra $\bK_G\sE_G(A)$.  Its morphism $G$-spectra 
$\sA_G(A,B)$ are the $\bK_G\EGone(B\times A)$. Its unit $G$-maps $S_G\rtarr \sA_G(A,A)$ 
are induced by the points $I_A\in \GEone(A,A)$ (see \autoref{Whisker}) and its composition $G$-maps
\[\sA_G(B,C)\sma \sA_G(A,B)\rtarr \sA_G(A,C) \]  
are induced by composition in $\EGone$.
\end{defn}

Again, as written, the definition makes little sense: to make the word ``induced'' meaningful 
requires properties of the equivariant infinite loop space machine $\bK_G$ that we will spell out
in \autoref{GMSum}.  This depends on having a functor
that takes pairings (alias bilinear maps)
of free $\sP_G$-algebras 
to pairings of $G$-spectra.    

The equivariant and non-equivariant infinite loop space functors are related by the following result.

\begin{thm}[\cite{GM3}]\label{start}  There is a natural equivalence of spectra
\[ \io\colon \bK(G\sC) \rtarr (\bK_G\sC_G)^G \]
for permutative $G$-categories $\sC_G$ with $G$-fixed permutative categories $G\sC$.
\end{thm}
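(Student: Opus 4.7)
The plan is to construct $\io$ naturally from the two machines and then promote it to a weak equivalence by invoking the group-completion property on each side. First, I would build $\io$ using that the operad $\sO_G$ is designed so that $\sO_G(j)^G \iso \tilde{\SI}_j = \sO(j)$ (as verified by direct inspection of $\sC\!at_G(\tilde{G},\tilde{\SI}_j)^G$, since a $G$-functor $\tilde{G}\rtarr \tilde{\SI}_j$ with trivial action on the target must be constant on objects), so that the $G$-fixed subcategory $(\sA_G)^G = G\sA$ inherits a canonical permutative structure. The $G$-equivariant inclusion $G\sA \hookrightarrow \sA_G$, with trivial $G$-action on the source, is a map of $\sO_G$-algebras, so $\bK_G$ produces a $G$-map of orthogonal $G$-spectra $\bK_G(G\sA)\rtarr \bK_G\sA_G$. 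Passing to $G$-fixed points and composing with a direct identification $(\bK_G(G\sA))^G \iso \bK(G\sA)$ -- valid because for a permutative category with trivial $G$-action the equivariant bar construction reduces cell by cell to the nonequivariant one via $(\sO_G(j)\times_{\SI_j}\sA^j)^G \iso \sO(j)\times_{\SI_j}\sA^j$ -- produces $\io$.

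Second, on $0$th spaces the map is the canonical isomorphism $B(G\sA) \iso (B\sA_G)^G$, since the nerve functor preserves limits and hence commutes with forming the $G$-fixed subcategory of a $G$-category. To extend to an equivalence of spectra I would invoke the group-completion property. The $0$th structural map of $\bK(G\sA)$ is a group completion $B(G\sA)\rtarr \OM \bK(G\sA)_1$ by construction. Correspondingly, the $0$th structural map of $\bK_G\sA_G$ is an equivariant group completion $B\sA_G \rtarr \OM (\bK_G\sA_G)_1$; taking $G$-fixed points produces a map $B(G\sA) \rtarr \OM (\bK_G\sA_G)_1^G$, which I would show is still a nonequivariant group completion. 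Uniqueness of group completion up to weak equivalence then forces $\io$ to be a $\pi_0$-isomorphism, and the same argument at all higher positive levels upgrades this to a level equivalence of positive $\OM$-spectra, hence a stable equivalence. Naturality in $\sA_G$ is automatic since each ingredient is manifestly natural.

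The hard part is precisely the assertion that $G$-fixed points of the equivariant group completion yield a nonequivariant group completion of $B(G\sA)$. This requires the fixed-point functor to commute with the geometric realization and smash products used in the specific bar-construction model of $\bK_G$, and that the equivariant group completion theorem specialize correctly along $(-)^G$. Both depend sensitively on the choice of the operad $\sO_G$ and of the target category of orthogonal $G$-spectra, and verifying them is the substantive content deferred to \cite{GM3} (and, for the equivariant group completion itself, \cite{MMO}) -- which is exactly why this theorem is invoked here as a black box.
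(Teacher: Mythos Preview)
The paper does not prove this theorem at all: it is stated with the attribution \cite{GM3} and is explicitly one of the results ``taken as a black box'' with proof deferred to the sequel. There is therefore no proof in this paper to compare your proposal against.

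That said, your outline is a reasonable sketch of the expected argument, and you correctly locate the genuine content: commutation of $G$-fixed points with the bar-type constructions inside $\bK_G$ and the passage from equivariant to nonequivariant group completion on fixed points. One small correction to your first step: the identification $\sO_G(j)^G\iso\sO(j)$ is fine, but the ensuing claim that for trivial $G$-action one has $(\sO_G(j)\times_{\SI_j}\sA^j)^G \iso \sO(j)\times_{\SI_j}\sA^j$ is more delicate than you indicate, since fixed points need not commute with the $\SI_j$-quotient; already for $\sE_G(A)^G$ with nontrivial $A$ the paper notes (after \myref{FixIso}) that $G$-fixed elements of $\sO_G(n)\times_{\SI_n}A^n$ arise from homomorphisms $G\rtarr\SI_n$, not only from constant functions. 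In the trivial-action case this collapses as you say, but the verification is part of what \cite{GM3} must supply rather than something that is immediate ``cell by cell.'' Your closing paragraph already concedes this, so the proposal is internally honest about its scope.
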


In view of \autoref{FixIso2}, there results an equivalence of $\sS$-categories
\[\xymatrix@1{ G\sA \ar[r]^-{\simeq} &  (\sA_G)^G.} \]

The proof of \autoref{MAINNew1} goes as follows. 
 We now write $\GDAll$ for the spectral version of the category introduced in \autoref{GDcon}.
 We start with the following {\autoref{MAINOneCor}, which is} a specialization
of \cite[Lemma~1.35]{GM0}; it is discussed in \autoref{PresheafModels}.  
The essential point is that the collection $\{\SI^\infty_G A_+\}$ is a set of generators for $\text{Ho}G\sS$.

\begin{thm}\label{MAINOneCor}  There is an $\sS$-enriched Quillen adjunction
\[\xymatrix@1{\mathbf{Pre}(\GDAll,\sS) \ar@<.4ex>[r]^-\bT & G\sS \ar@<.4ex>[l]^-{\bU}, }\]
and it is a Quillen equivalence.
\end{thm}

\begin{rem}\label{AllOrb}   Instead of using $\GDAll$, we can use its full subcategory $\GDOrb$ 
obtained by restricting the $A$ to be orbits $G/H$, and then the result generalizes to compact Lie 
groups $G$; see \autoref{MAINOne}.  We define
$\GDOrb$ as we defined $\GDAll$ in \autoref{GDcon}, again using fibrant replacements.    
 Then $\GDAll$ and $\GDOrb$ are the $G$-fixed $\sS$-categories obtained from full
 $G\sS$-subcategories $\sD_{All}$ and $\sD_{Orb}$ of $\sS_G$.
\end{rem}

We will prove the following result in \autoref{PROOF}.

\begin{thm}[Equivariant version of the main theorem]\label{MAINNew2}  There is a zigzag of 
weak equivalences connecting the $G\sS$-categories $\sA_G$ and $\sD_{All}$.
\end{thm}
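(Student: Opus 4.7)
The plan is to build the zigzag from the categorical duality data of Definitions \myref{epsilon}, \myref{eta} and Proposition \myref{NewDual}, combined with the equivariant Barratt-Priddy-Quillen theorem. The underlying idea is that after applying $\bK_G$, the maps $\et$ and $\epz$ realize $\SI^{\infty}_G(A_+)$ as self-dual, and the duality isomorphism $\de$ of (\ref{delta}) then identifies $\sB_G(A,B)$ with $\sD_G(A,B)$.

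First I would invoke the equivariant BPQ theorem from \cite{GM3} to produce a natural weak equivalence $\bK_G\sE_G(A)\simeq \SI^{\infty}_G(A_+)$ for each finite $G$-set $A$. Because $\bK_G$ carries the pairing $\om$ of Definition \myref{omega} to a smash product, this yields natural weak equivalences
\[\sB_G(A,B) = \bK_G\sE_G(B\times A)\simeq \SI^{\infty}_G(B_+)\sma \SI^{\infty}_G(A_+).\]
Applying $\bK_G$ to $\et$ and $\epz$ gives $G$-maps $S_G\rtarr \SI^{\infty}_G(A_+)\sma \SI^{\infty}_G(A_+)$ and $\SI^{\infty}_G(A_+)\sma \SI^{\infty}_G(A_+)\rtarr S_G$. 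Proposition \myref{NewDual}, together with the operad coherence (\ref{drei}), is formulated precisely so as to be preserved by an equivariant infinite loop space machine compatible with pairings; it therefore implies the triangle identities of a dual pair in the stable homotopy category. Hence $\SI^{\infty}_G(A_+)$ is self-dual, and the map $\de$ of (\ref{delta}) becomes a weak equivalence $\SI^{\infty}_G(B_+)\sma \SI^{\infty}_G(A_+)\rtarr F_G(\SI^{\infty}_G(A_+),\SI^{\infty}_G(B_+))$ whose target is equivalent to $\sD_G(A,B)$ after fibrant replacement.

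To upgrade these pointwise equivalences to a zigzag of $\sS_G$-categories I would introduce an intermediate $\sS_G$-category $\sB'_G$ with the same objects as $\sB_G$ and $\sD_G$ and with hom-spectra a chosen rectification of $\bK_G\sE_G(B)\sma \bK_G\sE_G(A)$. Composition in $\sB'_G$ would be built from $\om$, $\et$, and $\epz$ so as to admit two comparison $\sS_G$-functors: one $\sB'_G\rtarr \sB_G$ induced by $\om$ together with the pullback/pushforward functors of Definition \myref{compcat} (realizing the span composition), and one $\sB'_G\rtarr \sD_G$ induced by $\de$ and the adjunction defining $F_G$. Each leg is a weak equivalence on hom-spectra by the previous paragraph.

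The main obstacle is verifying that these two legs are honest $\sS_G$-functors, not merely collections of hom-wise equivalences. Span composition in $\sB_G$ is built from the diagonal-inclusion and projection functors of Remark \myref{Sillyrem}, while composition in $\sD_G$ is adjoint to smashing with $\epz$. Matching these two compositions through $\de$ is precisely the spectrum-level shadow of the categorical identities in Proposition \myref{NewDual}, particularly $\bO_G(\id\sma\epz)\com \ze_\ell = \id$. Once these compatibilities are verified at the level of pairings of $\sO_G$-algebras, the properties of $\bK_G$ spelled out in \S\ref{GMSum} will promote them to the required compatibilities of spectral compositions, completing the construction of the zigzag.
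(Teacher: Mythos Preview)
Your conceptual outline is right: the self-duality coming from $\et$ and $\epz$, transported by $\bK_G$ via the BPQ theorem, does yield the equivalence $\de$ of \myref{DELTA}, and that is indeed the heart of the argument. But the paper builds the zigzag quite differently, and your proposed construction has a real gap at exactly the point you flag as ``the main obstacle.''

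The paper does not try to write down an intermediate category with smash-product hom spectra and then produce an $\sS_G$-functor to $\sD_G$ via $\de$. Instead it works in the model category of $\sS_G$-categories with fixed object set: it takes a cofibrant approximation $\la\colon Q\sB_G\to\sB_G$ and a fibrant approximation $\rh\colon Q\sB_G\to RQ\sB_G$, and then \emph{chooses} the bifibrant objects of $\sD_G$ to be the $RQ\sB_G(A)$. With that choice, the map $\ga\colon RQ\sB_G\to\sD_G$ is defined as the adjoint of composition and is therefore an honest $\sS_G$-functor for free (by \cite[5.6]{GM0}). The only thing left is to show $\ga$ is a hom-wise stable equivalence, and that is done by a diagram chase in $Ho G\sS$ using $\de$, $\al$, and lifts $\mu\colon\bA\to Q\sB_G A$ along $\la$; crucially, this diagram only needs to commute in the homotopy category, so one can freely invert $\al$ and $\la$.

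Your route, by contrast, asks for a point-set $\sS_G$-functor $\sB'_G\to\sD_G$ built from $\de$. You never say what the composition in $\sB'_G$ is, and more seriously, $\de$ lands in $F_G(\SI^\infty_G A_+,\SI^\infty_G B_+)$ rather than in $F_G$ of the fibrant replacements that are the actual hom spectra of $\sD_G$; fixing this forces you through exactly the replacement gymnastics the paper performs. The identities of \myref{NewDual} do underlie the proof, but they are used to establish self-duality and hence that $\de$ is an equivalence, not to produce strict functoriality of a comparison map. Without the model-categorical replacements and the freedom to choose the objects of $\sD_G$, you have not explained how to get an honest $\sS_G$-functor into $\sD_G$.
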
 

A weak equivalence between $G\sS$-categories with the same object sets
is just an $G\sS$-enriched functor that induces weak equivalences on morphism $G$-spectra.\footnote{A
more general definition is given in \cite[Definition~2.3]{GM0}.}  On passage to $G$-fixed categories, this 
equivariant zigzag induces a zigzag of weak $\sS$-equivalences connecting the $\sS$-categories 
$G\sA$ and $\GDAll$. In turn, by \cite[Proposition~2.4]{GM0}, this zigzag induces a zigzag of Quillen equivalences 
between $\mathbf{Pre}(G\sA,\sS)$ and $\mathbf{Pre}(\GDAll,\sS)$.  Since $\mathbf{Pre}(\GDAll,\sS)$ is 
Quillen equivalent to $G\sS$, it follows that \autoref{MAINNew2} implies \autoref{MAINNew1}.

\begin{rem} 
For a $G$-spectrum X, the functor $\bU(X)$ (of \autoref{MAINOneCor})
sends an orbit $G/H$ to $F_G(\SI^{\infty}_G G/H_+,X)^G\iso X^H$. Keeping that
fact in mind shows why \autoref{reassure} follows from the proof of \autoref{MAINNew1}.
\end{rem}

To understand $G\sS$ as an $\sS$-category, we must first understand $\sS_G$ as a $G\sS$-category.
That is, to understand the $G$-fixed spectra $F_G(X,Y)^G$, we must first understand the function $G$-spectra 
$F_G(X,Y)$.  Using infinite loop space theory to model function spectra implicitly raises a conceptual issue: 
there is no known infinite loop space machine that knows about function spectra. That is, given input data $X$ 
and $Y$ (permutative $G$-categories, $E_{\infty}$-$G$-spaces, $\GA$-$G$-spaces, etc) for an infinite 
loop space machine $\bK_G$, we do not know what input data will have as output the function $G$-spectra 
$F_G(\bK_GX,\bK_GY)$. The problem does not even make sense as just stated because the output $G$-spectra 
$\bK_GX$ are always connective, whereas $F_G(\bK_GX,\bK_GY)$ is generally not.  The most that one 
could hope for in general is to detect the connective cover of $F(\bK_GX,\bK_GY)$. In our case, the 
relevant function $G$-spectra are connective since the suspension $G$-spectra $\SI^{\infty}_G(A_+)$ are
self-dual, as we shall reprove in \autoref{selfish}.

\subsection{Results from equivariant infinite loop space theory}\label{GMSum}
The proof of \autoref{MAINNew2} is the heart of this paper, and of course   
it depends on equivariant infinite loop space theory and in particular on the relationship 
between the $G$-spectra $\sA_G(A)=\bK_G\sE_G(A)$ and the suspension $G$-spectra $\SI^{\infty}_G(A_+)$.  
We collect the results that we need from \cite{GMMOMain} in this section. We warn the skeptical reader 
that the results of this paper depend fundamentally on  Theorems \ref{MultiFun} and \ref{BPQ0}.
However, the proofs of those results require work far afield from the applications in this paper.

In fact, \autoref{MAINNew2} is an application of a categorical version of the 
equivariant Barratt-Priddy-Quillen (BPQ) theorem for the identification of suspension 
$G$-spectra.\footnote{For $A=\ast$, Carlsson \cite[p.6]{Carl} mentions a space level version 
of the BPQ theorem. Shimakawa \cite[p. 242]{Shim} states and gives a sketch 
proof of a $G$-spectrum level version.}  We state the theorem
in full generality before restricting attention to finite $G$-sets.  We shall find use for the
full generality in \autoref{secsusp}. 

Recall from \autoref{OOO2} that $\sE_G(A)$ can be identified with the category $\bP_G(A_+)$, where $\bP_G$ is
the free $\sP_G$-category functor on based $G$-categories.  The functor $\bP_G$ applies equally well to based 
topological $G$-categories.\footnote{We understand a topological $G$-category to mean an internal category in 
the category of $G$-spaces.} We view a based $G$-space $X$ as a topological 
$G$-category that is discrete in the categorical sense: its morphism and object $G$-spaces are both $X$, 
and its source, target, identity, and composition maps are all its identity map.  Thus we have the 
topological $\sP_G$-category $\bP_G(X)$. The geometric realization of its nerve is the free
$E_{\infty}$-$G$-space generated by $X$.

Henceforward, we use the term stable equivalence, rather than weak 
equivalence, for the weak equivalences in our model categories of spectra and $G$-spectra.
In previous work, we established \cite[Theorem~6.2]{GM3} an equivariant version of the Barratt-Priddy-Quillen theorem, 
giving a natural equivalence between $\Sigma^\infty_G X_+$ and $\bK_G \bP_G(X)$.
However, in order to produce our spectral category $\sA_G$, 
we require a more structured version of that result.

First, it is essential that we have a machine with good multiplicative properties.  The following result, which is proven 
in \cite{GMMOMain}, gives far more than we need.  As explained in \cite[\S3]{GMMOMain}, we have a multicategory  
$\Mult(\sP_G)$ of (strict) $\sP_G$-algebras and pseudomorphisms between them; it is a submulticategory of a 
multicategory $\Mult(\sP_G\text{-}\mathbf{PsAlg})$ of $\sP_G$-pseudoalgebras. 
The multilinear maps of $\Mult(\sP_G)$
 require $\sP_G$-pseudomaps despite the restriction to strict $\sP_G$-algebras as objects.   We also have the multicategory 
$\Mult(G\sS)$ associated to the symmetric monoidal category of orthogonal $G$-spectra under the smash product.

\begin{thm}\cite{GMMOMain}\label{MultiFun}
$\bK_G$ extends to a multifunctor
\[ \bK_G: \Mult(\sP_G) \rtarr \Mult(G\sS).\]
\end{thm}

\begin{rem}\label{homotopy}
At one place in the duality proof of \autoref{selfish} below, we use from \cite[Proposition~9.24]{GMMOMain} 
that $\bK_G$ converts 
$2$-cells, such as $\vartheta$ 
in the proof of \autoref{NewDual}, to homotopies between maps of $G$-spectra. 
\end{rem}

\begin{rem}
\label{KGposfibrant}
 In the proof of \autoref{MAINNew2}, we will  use the fact that $\bK_G$ takes values in positive $\Omega$-$G$-spectra \cite{GMMOMain}.
\end{rem}

\begin{cor}\label{AGspectral}
The construction $\sA_G$ given in \autoref{GKGE} defines a $G\sS$-category.
\end{cor}
\begin{proof}
It is shown in \cite[\S3.5]{GMMOMain} that the pairing $\om$ of \autoref{omega} is a bilinear morphism in $\Mult(\sP_G)$.
Moreover, the functors $(\id\times\Delta\times\id)^*$ and $\pi_{!}$ of \autoref{compsquare} are maps of $\sP_G$-algebras. It 
follows that the composition $\sE_G(B,C)\times\sE_G(A,B)\xrightarrow{\circ} \sE_G(A,C)$ is also bilinear. 
This remains true after applying the whiskering construction of \autoref{Whisker}.
Therefore the multifunctor $\bK_G$ produces a map of $G$-spectra $\sA_G(B,C) \sma \sA_G(A,B) \rtarr \sA_G(A,C)$ as desired.
The fact that the composition in $\EGone$ is strictly associative and unital ensures that the same is true in $\sA_G$.
\end{proof}

\autoref{MultiFun} yields another important consequence. 
Observe that the pairing $\om$ of \autoref{omega} generalizes from $G$-sets $A$ and $B$ to 
$G$-spaces $X$ and $Y$, giving a natural pairing
\[ \om\colon \bP_G(X_+)\sma\bP_G(Y_+)\rtarr \bP_G(X_+\sma Y_+). \] 
Then \autoref{MultiFun} produces a map of $G$-spectra
\[ \sma\colon \bK_G\bP_G(X_+) \sma \bK_G\bP_G(Y_+) \rtarr \bK_G\bP_G(X_+\sma Y_+). \]
This  makes the assignment $X\mapsto \bK_G \bP_G(X_+)$ into a lax monoidal functor from (unbased) $G$-spaces to orthogonal $G$-spectra.

With this multiplicative machine in hand, it now makes sense to ask for a Barratt-Priddy-Quillen comparison that 
is also compatible with the multiplicative structure. That is another main result of \cite{GMMOMain}. 
Recall that the assignment $X \mapsto \Sigma^\infty_G X_+$ is a strong monoidal functor from (unbased) $G$-spaces to orthogonal $G$-spectra.

\begin{thm}[\cite{GMMOMain}]
\label{BPQ0} 
There is a monoidal natural transformation
\[  \al\colon \SI^{\infty}_G(X_+)\rtarr \bK_G\bP_G(X_+)  \]
of functors from (unbased) $G$-spaces to orthogonal $G$-spectra, which restricts to a natural stable equivalence on the subcategory of $G$-CW complexes.
\end{thm}

For the remainder of this section, we restrict our attention to the case when $X$ is a finite $G$-set $A$, although we will return to the generality of $G$-spaces $X$ in \autoref{secsusp}. We therefore use the identification \eqref{yeah2} to rewrite $\bP_G(A_+)$ as $\sE_G(A)$. 

That the transformation of \autoref{BPQ0} is monoidal 
means that we have a commutative diagram
\begin{equation}\label{accept1} \xymatrix{
\SI^{\infty}_G(A_+) \sma \SI^{\infty}_G(B_+)\ar[d]_{\sma}^-{\iso}  \ar[r]^-{\al\sma \al} 
&  \bK_G\sE_G(A) \sma \bK_G\sE_G(B) \ar[d]^{\sma}\\
\SI^{\infty}_G(A \times B)_+ \ar[r]_-{\al} & \bK_G\sE_G(A\times B). } 
\end{equation}

We restate the naturality of $\al$ with respect to $G$-maps $f\colon A\rtarr B$ 
in the diagram
\begin{equation}\label{accept2} 
\xymatrix{
\SI^{\infty}_G(A_+) \ar[d]_{\SI^{\infty}_G f_+}  \ar[r]^-{\al} 
&  \bK_G\sE_G(A) \ar[d]^{\bK_G f_!}\\
\SI^{\infty}_G(B_+) \ar[r]_-{\al} & \bK_G\sE_G(B). }
\end{equation}

If $i\colon A\rtarr B$ is an injection with retraction $r\colon B_+\rtarr A_+$, 
we have the induced map of $G$-spectra
\[ \bK_Gi^* = \bK_G r_!\colon \bK_G\sE_G(B)\rtarr \bK_G\sE_G(A), \] 
and (\ref{accept2}) specializes to
\begin{equation}\label{accept3} 
\xymatrix{
\SI^{\infty}_G(B_+) \ar[d]_{\SI^{\infty}_Gr}  \ar[r]^-{\al} 
&  \bK_G\sE_G(B) \ar[d]^{\bK_G i^*}\\
\SI^{\infty}_G(A_+) \ar[r]_-{\al} & \bK_G\sE_G(A) }
\end{equation}  
By \autoref{transfer} below, we may identify $\bK_Gi^*$ as the dual of $\bK_Gi$
and thus $\SI^{\infty}_G r$ as the dual of $\SI^{\infty}_G i_+$.

We combine these diagrams to construct those that we need to prove \autoref{MAINNew2}.
Let $A$, $B$, and $C$ be finite $G$-sets and recall \autoref{compcat}.   

\begin{prop}\label{keykey}  The following diagram of $G$-spectra commutes.
\begin{equation}\label{keydiagram} \xymatrix{
\SI^{\infty}_G(C\times B)_+ \sma \SI^{\infty}_G(B\times A)_+ 
\ar[d]_{\sma}^{\iso} \ar[r]^-{\al\sma\al} 
& \bK_G\sE_G(C\times B) \sma \bK_G\sE_G(B\times A) \ar[d]^{\sma} \\
\SI^{\infty}(C\times B\times B\times A)_+ \ar[r]^-{\al} \ar[d]_{\SI^{\infty}_G r}
& \bK_G\sE_G(C\times B\times B\times A) \ar[d]^{\bK_G(\id\times \DE\times \id)^*}\\
\SI^{\infty}(C\times B\times A)_+ \ar[r]^-{\al} \ar[d]_{\SI^{\infty}\pi}
& \bK_G\sE_G(C\times B\times A) \ar[d]^{\bK_G \pi_!} \\ 
\SI^{\infty}_G(C \times A)_+ \ar[r]_-{\al}  & \bK_G\sE_G(C\times A) }
\end{equation}
Here $r$ is the retraction which sends the complement of the image of
$\id\times \DE\times\id$ to the basepoint.
\end{prop}

The diagram (\ref{keydiagram}) relates the composition pairing of the $G\sS$-category
$\sA_G$ to remarkably simple and explicit maps between suspension $G$-spectra.  In fact, 
recalling \autoref{epsilon} and again writing $\epz=\SI^{\infty}_G\epz$, we see that the 
left vertical composite in (\ref{keydiagram}) can be identified with 
$\id\sma\epz\sma\id$.  We have proven the following result,
where we abuse notation by writing $\alpha$ for the composite
\[ \SI^\infty_G(B\times A)_+ \rtarr \bK_G\sE_G(B\times A) \rtarr \bK_G\EGone(B\times A).\]

\begin{thm}\label{curious} The following diagram of $G$-spectra commutes in $HoG\sS$.
\[ \xymatrix{
\SI^{\infty}_G(C\times B)_+ \sma \SI^{\infty}_G(B\times A)_+ \ar[d]_{\iso} \ar[r]^-{\al\sma\al} 
& \sA_G(B,C) \sma \sA_G(A,B) \ar[ddd]^{\circ} \\
\SI^{\infty}_G (C_+)\sma \SI^{\infty}_G(B\times B)_+ \sma \SI^{\infty}_G(A_+)   \ar[d]_{\id\sma \epz\sma\id} & \\
\SI^{\infty}_G (C_+) \sma S_G \sma \SI^{\infty}_G(A_+)\ar[d]_{\iso} & \\
\SI^{\infty}_G(C\times A)_+ \ar[r]_-{\al}  & \sA_G(A,C) } \]
\end{thm}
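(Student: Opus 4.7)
The plan is to deduce \myref{curious} directly from \myref{keykey}, essentially as a repackaging of diagram (\ref{keydiagram}). The right vertical composite of (\ref{keydiagram}) is the composition $\circ$ in $\sB_G$ \emph{by definition}, so only the left vertical composite needs to be re-expressed. I would begin by factoring $\SI^{\infty}_G(C\times B\times B\times A)_+$ as $\SI^{\infty}_G(C_+)\sma \SI^{\infty}_G(B\times B)_+\sma \SI^{\infty}_G(A_+)$ via the canonical strong-monoidal isomorphisms $\SI^{\infty}_G(X\sma Y)\iso \SI^{\infty}_G X\sma \SI^{\infty}_G Y$ that appear as the left vertical arrow of (\ref{accept1}); these are the isomorphisms used to set up the top and bottom horizontal identifications in \myref{curious}.

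Next, I would identify $\SI^{\infty}_G(\pi\com r)$ with $\id\sma\epz\sma\id$ at the point-set level. The retraction $r$ collapses every element $(c,b,b',a)$ with $b\neq b'$ to the basepoint and sends $(c,b,b,a)$ to $(c,b,a)$, and then $\pi$ drops the $B$-coordinate. The composite therefore sends $(c,b,b',a)$ to $(c,a)$ when $b=b'$ and to the basepoint otherwise, which is precisely the map $\id\sma\epz\sma\id\colon (C_+)\sma (B\times B)_+\sma (A_+)\rtarr (C_+)\sma S^0\sma (A_+)$ followed by the canonical isomorphism $(C_+)\sma S^0\sma (A_+)\iso (C\times A)_+$, with $\epz$ the Kronecker delta map of \myref{epsilon}. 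Applying $\SI^{\infty}_G$ and using $\SI^{\infty}_G S^0\iso S_G$ then produces the middle two arrows of the target diagram.

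Substituting these identifications into the left column of (\ref{keydiagram}) while leaving the right column as the composition $\circ$ yields exactly the diagram of \myref{curious}, whose commutativity is inherited from \myref{keykey}. I expect the only real obstacle to be clerical: one must track the strong monoidality of $\SI^{\infty}_G$ consistently so that the three smash-product decompositions (of $C\times B\times B\times A$, of $C\times A$ under $1\times A\iso A$, and of the unit $S_G\iso \SI^{\infty}_G S^0$) all fit together coherently. No further input from equivariant infinite loop space theory beyond what is already absorbed in \myref{keykey}---namely the BPQ equivalence $\al$ and its monoidal compatibility from \myref{BPQ0} and \myref{BPQ2}---is required; the entire content of the theorem is the point-set identification of $\pi\com r$ with the Kronecker delta $\epz$.
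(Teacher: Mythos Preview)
Your proposal is correct and follows essentially the same approach as the paper: the authors observe that the right column of (\ref{keydiagram}) is $\circ$ by definition, and then, recalling from \myref{epsilon} that $\epz=\pi\com r$, identify the left column with $\id\sma\epz\sma\id$ under the strong monoidal isomorphisms for $\SI^{\infty}_G$. Your treatment is in fact more explicit about the point-set identification of $\pi\com r$ and the coherence of the monoidal isomorphisms, but the content is identical.
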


\subsection{The self-duality of $\SI^{\infty}_G(A_+)$}\label{selfish}

Let $A$ be a finite $G$-set and write $\bA = \SI^{\infty}_G(A_+)$ for brevity of notation.  
As recalled in \autoref{duality}, in order to show that $\bA$ is self-dual in $HoG\sS$,
we must define maps
$\et\colon  S_G \rtarr \bA\sma \bA$  and $\epz\colon \bA\sma \bA \rtarr S_G$
in the stable homotopy category $HoG\sS$ such that the composites
\begin{equation}\label{dualid}
\xymatrix@1{ \bA \ar[r]^-{\et\sma \id} & \bA\sma \bA \sma \bA \ar[r]^-{\id\sma\epz} & \bA } \ \ \tand \ \
\xymatrix@1{ \bA \ar[r]^-{\id\sma \et} & \bA\sma \bA \sma \bA \ar[r]^-{\epz\sma\id} & \bA }
\end{equation}
are the identity map in $HoG\sS$.  Using the stable equivalence $\al$ and the definitions of $\et$ and $\epz$
from  \autoref{epsilon} and \autoref{eta}, we let $\et$ and $\epz$ be the composites
\[\xymatrix@1{ 
S_G \ar[r]^-{\al} & \bK_G\sE_G(1) \ar[r]^-{\bK_G\et} & \bK_G\sE_G(A\times A) \ar[r]^-{\al^{-1}} 
& \SI^{\infty}_G (A\times A)_+ \iso \bA\sma\bA } \]
and
\[\xymatrix@1{ 
\bA\sma\bA \iso \SI^{\infty}_G (A\times A)_+ \ar[r]^-{\al} & \bK_G\sE_G(A\times A) \ar[r]^-{\bK_G\epz} 
& \bK_G\sE_G(1) \ar[r]^-{\al^{-1}} & S_G. } \]
The following commutative diagram proves that the first composite in (\ref{dualid}) is the identity map in $HoG\sS$;
the second is dealt with similarly.
We abbreviate notation by setting $\sA_GA = \bK_G\sE_G(A)$.  Remember that $\sE_G(A) = \bP_G(A_+)$. The center two
squares are derived by use of the diagrams from \autoref{NewDual}.

\[ \scalebox{0.8}{\xymatrix @C1em{
\sA_G(A^2)\sma \bA \ar[dr]^{\id\sma \al} & & 
({\bA^2})\sma \bA \iso {\bA^3} \iso \bA\sma ({\bA^2}) 
\ar[ll]_-{\al\sma \id} \ar[rr]^-{\id\sma\al} \ar[d]^{\al} & &
\bA\sma \sA_G(A^2) \ar[dl]_{\al\sma\id} \ar[dd]^{\id\sma \epz}\\
& \sA_G(A^2)\sma\sA_G A \ar[r]^{\sma} & \sA_G(A^3) \ar@<1ex>[dd]^{\id\times\epz} 
& \sA_G A\sma \sA_G(A^2) \ar[l]_{\sma} \ar[dd]^{\id\sma \epz} & \\
\sA_G 1 \sma \bA \ar[ur]_{\et\sma\al} \ar[uu]^{\et\sma\id} & & & & \bA \sma \sA_G 1 \ar[dl]_{\al\sma\id} \\
& \sA_G1 \sma \sA_G A \ar[uu]_{\et\sma\id} \ar[r]^{\sma} & \sA_G A 
& \sA_G A\sma \sA_G1 \ar[l]_{\sma} & \\
S_G\sma \bA \ar[ur]_{\al\sma\al} \ar[rr]_{\iso} \ar[uu]^{\al\sma\id} 
&& \bA \ar[u]_{\al} 
&& \bA\sma S_G \ar[ul]^{\al\sma\al} \ar[ll]^{\iso} \ar[uu]_{\id\sma\al} }} \]

Given \autoref{BPQ0}, it is trivial that the outer parts of the diagram commute. The right central diagram 
is just a naturality diagram, as in \autoref{NewDual}.   The left central diagram commutes up to homotopy
by that result and \autoref{homotopy}.  

Specializing general observations about duality recalled in \autoref{duality}, we have the following corollary.
This homotopical input is the crux of the proof of \autoref{MAINNew2}. 

\begin{cor}\label{DELTA} 
For finite $G$-sets $A$ and $B$, the canonical map  
\[\delta = \ze\com (\id\sma \tilde{\epz}) \colon \bB\sma \bA\rtarr \bB\sma D\bA \rtarr F_G(\bA,\bB)\]
of (\ref{delta}) is a stable equivalence.   
\end{cor}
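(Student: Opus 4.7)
The plan is to chain together two classical duality-theoretic facts recalled in \S\ref{duality} with the self-duality of $\mathbf{A}$ just established in \S\ref{selfish}. The substantive work has already been carried out in the preceding diagram chase; what remains is essentially a formal corollary.

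First, I would observe that the commutative diagram preceding the corollary, together with its symmetric companion, verifies the two triangle identities (\ref{dualid}) for the explicit maps $\et$ and $\epz$ built via Definitions \ref{epsilon} and \ref{eta}, \myref{NewDual}, and the equivariant BPQ theorem (\myref{BPQ0}). Consequently $(\mathbf{A},\mathbf{A})$ is a dual pair in $\text{Ho}G\sS$ with unit $\et$ and counit $\epz$. By the general fact recalled in \S\ref{duality} (see \cite[III.1.3]{LMS}), the adjoint $\tilde{\epz}\colon \mathbf{A}\rtarr D\mathbf{A}$ of $\epz$ is then an isomorphism in $\text{Ho}G\sS$, and in particular a stable equivalence.

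Second, since $\mathbf{A}$ is dualizable, the same reference \cite[III.1.3]{LMS} applies to give that the natural map
\[ \ze\colon \mathbf{B}\sma D\mathbf{A}\rtarr F_G(\mathbf{A},\mathbf{B}) \]
of (\ref{zeta}) is also an isomorphism in $\text{Ho}G\sS$. The composite $\de = \ze\com (\id\sma \tilde{\epz})$ of two stable equivalences is therefore itself a stable equivalence, as required.

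There is no real obstacle remaining: the genuine difficulty has been absorbed into the proof of self-duality in \S\ref{selfish}, whose crux was checking that the categorical pairings of free $\sO_G$-algebras assemble under $\bK_G$ into spectrum-level pairings satisfying the triangle identities. Once that is in place, the only point one might want to double-check is that the $\et$ and $\epz$ constructed via the stable equivalence $\al$ are genuinely the unit and counit of a duality in $\text{Ho}G\sS$ in the sense of \S\ref{duality}; this is immediate from the explicit verification of (\ref{dualid}) just given.
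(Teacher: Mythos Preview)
Your proposal is correct and takes essentially the same approach as the paper: the paper presents this as an immediate specialization of the general duality facts recalled in \S\ref{duality} (that $\tilde{\epz}$ is an isomorphism for a dual pair and that $\ze$ is an isomorphism when $X$ is dualizable), once the self-duality of $\mathbf{A}$ has been established in \S\ref{selfish}. You have simply spelled out those two ingredients explicitly, which is exactly what the paper intends by ``specializing general observations about duality.''
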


We insert a mild digression concerning the identification of some of our maps.

\begin{rem}\label{transfer} For an injection $i\colon A\rtarr B$ of finite $G$-sets, (\ref{dualmap}) and the 
precise constructions of $\et$ and $\epz$ starting from  \autoref{epsilon} and \autoref{eta} imply that the 
dual of $i$ is the map $\bB\rtarr \bA$ induced by the evident retraction $r\colon B_+\rtarr A_+$.
A $G$-map $\pi\colon G/H\rtarr G/K$ is a bundle, and the dual of $\SI^{\infty}\pi_+$ is the associated 
transfer map (see e.g. \cite[IV.pp 182 and 192]{LMS}).  It can be identified explicitly by a similar
(but not especially illuminating) inspection of definitions. \end{rem}

\subsection{The proof that $\sA_G$ is equivalent to $\sD_{\text{All}}$}\label{PROOF}

We will have to chase large diagrams, and we again abbreviate notations by writing
\[ \bA = \SI^{\infty}_G(A_+), \ \ \  \bB = \SI^{\infty}_G(B_+), 
\ \ \ \text{and} \ \ \ \bC = \SI^{\infty}_G(C_+)\]
for finite $G$-sets $A$, $B$, and $C$.  We also abbreviate notation by writing
\[ \sA_G(A)=\sA_G(\ast,A).\] 
This is the $G$-spectrum $\sA_G(A)=\bK_G\sE_G(A)$, which is equivalent to $\bA$ by \autoref{BPQ0}.  
Remember that we are free to choose any bifibrant equivalents of the $G$-spectra $\bA$
as the objects of $\sD_{\text{All}}$.  

\begin{proof}[Proof of \autoref{MAINNew2}] We use model categorical arguments,
and we work with the stable model structure on $G\sS$. We use \cite[\S2.4]{GM0} to obtain
a model structure on the category $G\sS\bO$-$\sC\! at$ of $G\sS$-categories with the
same object set $\bO$ as $G\sE$.  {We emphasize that this is a model structure on a 
category of categories.}  Maps are weak equivalences or fibrations if they
induce weak equivalences or fibrations on hom objects in $G\sS$.  Here the nature of 
the objects is irrelevant; we are concerned with $G\sS$-categories with one object for 
each finite $G$-set $A$. 

Let $\la\colon Q{\sA_G}\rtarr \sA_G$ be a cofibrant approximation of $\sA_G$.  
By \cite[Theorem~2.16]{GM0}, since $S_G$ is cofibrant in the stable model structure 
each morphism $G$-spectrum $Q\sA_G(A,B)$ is cofibrant in $G\sS$. The maps 
$\la\colon Q\sA_G(A,B) \rtarr \sA_G(A,B)$ are stable acyclic fibrations.
Digressively, since the $\sA_G(A,B)$ are fibrant in the positive stable model 
structure (see \autoref{KGposfibrant}), that is also true of the $Q\sA_G(A,B)$; we will use this fact later, 
in \autoref{secsusp}.

Let $\rh\colon Q\sA_G\rtarr RQ\sA_G$ be a fibrant approximation of $Q\sA_G$.  
The morphism $G$-spectra $RQ\sA_G(A,B)$ are then bifibrant in the stable
model structure.  Therefore $RQ\sA_G(A)$ is bifibrant for each $A$, and it is 
stably equivalent to $\bA$. 
We take the $RQ\sA_G(A)$ as the bifibrant approximations of the 
$\bA$ that we use to define the full $G\sS$-subcategory $\sD_{\text{All}}$ of $G\sS$.  

We now have a zig-zag
\[ \xymatrix{ \sA_G & Q\sA_G \ar[l]_\la^\sim \ar[r]^\rho_\sim & RQ\sA_G}\]
of stable equivalences of $G\sS$-categories. It remains to find a stable equivalence $RQ\sA_G \rtarr \sD_{\text{All}}$.
To abbreviate notation, let us write
$  RQ\sA_G (\ast,A) = RQ\sA_G A, $
and let
\[ \ga\colon RQ\sA_G(A,B) \rtarr \sD_{\text{All}}(A,B) = F_G(RQ\sA_GA,RQ\sA_GB) \]
be the adjoint of the composition map
\[ \circ\colon RQ\sA_G(A,B)\sma RQ\sA_GA\rtarr RQ\sA_GB. \]
By \cite[Construction~5.6]{GM0}, this defines a $G\sS$-functor
\[ \ga\colon RQ\sA_G\rtarr \sD_{\text{All}}.\]
It suffices to prove that each of the maps $\ga$ is a stable equivalence. 

We define $\mathcal{Q}_G$ to be
 the full $G\sS$-subcategory of $\sS_G$ with objects the $Q\sA_G(A)$.
It will play a role in our proof that $\ga$ is a stable equivalence.
To abbreviate notation, we agree to write $Q\sA_G(\ast,A) = Q\sA_G A$.
For finite $G$-sets $A$ and $B$, let
 \[ \be\colon Q\sA_G(A,B) \rtarr \mathcal{Q}_G(A,B) = F_G(Q\sA_GA,Q\sA_GB) \]
be the adjoint of the composition map
\[ \circ\colon Q\sA_G(A,B)\sma Q\sA_GA\rtarr Q\sA_GB. \]
This defines a $G\sS$-functor
\[ \be\colon Q\sA_G\rtarr \mathcal{Q}_G.\]
For each finite $G$-set $A$, $\bA$ is cofibrant and $\la\colon Q\sA_G A\rtarr \sA_G A$ 
is an acyclic fibration in the stable model structure on $G\sS$. Therefore there is a map 
$\mu\colon \bA\rtarr Q\sA_G A$ such that the diagram
\[ \xymatrix{
& Q\sA_G A \ar[d]^{\la} \\
\bA \ar[ur]^-{\mu} \ar[r]_-{\al} & \sA_G A } \]
commutes. Since $\al$ and $\la$ are stable equivalences, so is $\mu$. 
In the same way, we get a stable equivalence $\mu: \bB \sma \bA \rtarr Q\sA_G(A,B)$.

For the remainder of the proof, we work in the homotopy category $HoG\sS$. In particular, the distinction between $\bK_G\sE_G$ and $\bK_G\EGone$ vanishes. 
We claim that the following diagram of $G$-spectra commutes in $HoG\sS$. Indeed, modulo inversion of maps 
which are stable equivalences, it commutes on the nose. As before, we identify
$\bB\sma\bA = \SI^{\infty}_GB_+\sma \SI^{\infty}_GA_+$ with $\SI^{\infty}_G(B\times A)_+$.

\[ \xymatrix{
RQ\sA_G(A,B) \ar[r]^-{\ga} & F_G(RQ\sA_G A, RQ\sA_G B)
\ar[r]^-{F_{G}(\rh,\id)}_-{\htp} &
F_G(Q\sA_G A,RQ\sA_G B) \ar[dd]^-{F_{G}(\mu,\id)}_{\htp}  \\
&&\\
Q\sA_G(A,B) \ar[uu]^{\rh}_-{\htp} \ar[r]^-{\be}  
& F_G(Q\sA_G A,Q\sA_G B) \ar[uur]_-{F_G(\id,\rh)} \ar[ddr]^-{F_G(\mu,\id)} 
&  F_G(\bA,RQ\sA_G B) \\
&&\\
\bB\sma \bA \ar[uu]^{\mu}_{\htp} \ar[r]_-{\delta}^-{\htp} 
& F_G(\bA,\bB) \ar[r]_-{F_G(\id,\mu)}^-{\htp} 
& F_G(\bA, Q\sA_G B) \ar[uu]_-{F_G(\id,\rh)}^-{\htp}  } \]

The map $\de$ is the stable equivalence of \autoref{DELTA}. The maps $\mu$ and $\rh$ are also
stable equivalences.  The maps $F_G(\rh,\id)$ and $F_G(\mu,\id)$ that are 
labeled $\htp$ are stable equivalences by \cite[Lemma~1.22]{GM0} since $\rh$ and $\mu$
are maps between cofibrant objects and $RQ\sA_G B$ is fibrant.  The maps $F_G(\id,\mu)$ 
and $F_G(\id,\rh)$ that are labeled $\htp$ are stable equivalences by 
\cite[III.3.9]{MM}, which shows that the functor $F_G(\bA,-)$ preserves stable
equivalences. Provided that the diagram commutes, it follows that $\ga$ is a stable 
equivalence since all of the other outer arrows of the diagram are stable equivalences.

The top pentagon commutes since $\rho$ is a map of $G\sS$-categories, and both composites on the right give $F_G(\mu,\rho)$. 
It therefore remains to consider the lower pentagon.
To prove that the diagram commutes in $HoG\sS$, we consider its adjoint, 
which is displayed as the outer rectangle of the diagram below.
Here we have inserted the map $\com\colon\sA_G(A,B)\sma \sA_G A \rtarr \sA_G B$
and  arrows $\lambda$
into its source and target for purposes of proof.

\[ \xymatrix{
 Q\sA_G(A,B)\sma Q\sA_G A \ar[rrr]^-{\circ} \ar[dr]^-{\la\sma\la} 
 & & & Q\sA_G B  \ar[dl]_{\la}  \\
& \sA_G(A,B)\sma \sA_G A \ar[r]^-{\circ} & \sA_G B &  \\
\bB\sma \bA\sma \bA \ar[rrr]_{\id\sma \SI^{\infty}_G\epz}  
\ar[ur]^-{\al\sma\al} \ar[uu]^{\mu\sma\mu} & & &  \bB \ar[ul]_-{\al}  \ar [uu]_{\mu}
}\]

\noindent
Since $\la$ is a map of $G\sS$-categories, it is apparent
that all parts of the diagram commute except for the
bottom trapezoid. Taking $(A,B,C) = (\ast,A,B)$ in \autoref{curious}, we see that 
the trapezoid commutes.  Since the wrong way map $\la$ is a stable
equivalence and can be inverted upon passage to the homotopy category, this
diagram and its adjoint commute there.
\end{proof}

\subsection{The identification of suspension $G$-spectra}\label{secsusp}

We expand the adjoint $\sS$-equivalences in \autoref{MAINNew1} more explicitly as follows, using \cite[Proposition~2.4]{GM0}.
\begin{equation}\label{long}
\xymatrix{
G\sS \ar@<-.4ex>[r]_-\bU &
\mathbf{Pre}(\GDAll,\sS) \ar@<-.4ex>[r]_-{\gamma^*} \ar@<-.4ex>[l]_-{\bT}  &
\mathbf{Pre}((RQ\sA_G)^G,\sS)  \ar@<-.4ex>[d]_-{\rh^*} \ar@<-.4ex>[l]_-{\gamma_!} \\ 
\mathbf{Pre}(G\sA,\sS) \ar@<+.4ex>[r]^-{\iota_!} & 
\mathbf{Pre}((\sA_G)^G,\sS)\ar@<-.4ex>[r]_-{\lambda^*} \ar@<+.4ex>[l]^-{\iota^*}   &
\mathbf{Pre}((Q\sA_G)^G,\sS)  \ar@<-.4ex>[l]_-{\lambda_!} \ar@<-.4ex>[u]_-{\rho_!} \\ } 
\end{equation}
The map $\iota:G\sA\rtarr (\sA_G)^G$ is the equivalence of \autoref{start}. Before passage to $G$-fixed points,
the proof in \autoref{PROOF} gives stable equivalences of  $G\sS$-categories
\[ \rho\colon Q\sA_G\rtarr RQ\sA_G, \ \ \ga:RQ\sA_G\rtarr \sD_{\text{All}},  \ \text{and}\   \la\colon Q\sA_G\rtarr \sA_G. \]
These maps give stable equivalences of $\sS$-categories after passage to fixed points. 
Seeing this uses that the hom $G$-spectra in $RQ\sA_G$ and $\sD_{\text{All}}$ are fibrant, while those in $Q\sA_G$ and $\sA_G$ are positive fibrant, as discussed in the proof of \autoref{MAINNew2}.

For a finite $G$-set $B$, $\SI^{\infty}_GB_+$ corresponds under this zigzag to the presheaf $\mathbf{B}$ that sends 
$A$ to $G\sA(A,B)$.  This is almost a tautology since, for $E\in G\sS$, $\bU(E)$ is the presheaf represented by $E$,
while $G\sE(-,B)$ is the functor represented by $B$. In the proof of \autoref{MAINNew2}, we chose the 
bifibrant approximation of $\SI^{\infty}_GB_+$ in $\sD_{\text{All}}$ to be $RQ\sA_G(B)$. With $B$ fixed, that proof shows that 
$\ga$ gives an equivalence of presheaves
\[RQ\sA_G(-,B) \rtarr \gamma^*\bU RQ\sA_G(B)\] 
(before passage to $G$-fixed points). The functors $\rho^*$ and $\lambda_!$ and the isomorphism $\io^*$ preserve 
representable functors, and therefore $\io^*\lambda_!\rho^*RQ\sA_G(-,B)\simeq \bK_G\sE_G(-,B)$.

This observation can be generalized from finite based $G$-sets $B_+$ to arbitrary based $G$-spaces $X$.  To see
this, we mix general based $G$-spaces $X$ with finite based $G$-sets $A_+$ to obtain a functorial construction of a
presheaf $\PR(X)$.

\begin{defn} 
For a based $G$-space $X$,
define a presheaf $\PR(X)\colon (\sA_G)^{op} \rtarr \sS_G$ by letting
\[ \PR(X)(A) = \bK_G \bP_G (X\sma A_+). \]
The contravariant functoriality map
\[ \PR(X)\colon \sA_G(A,B) \rtarr F_G(\PR(X)(B),\PR(X)(A)) \]
is the composite of the retraction $\sA_G(A,B) = \bK_G\EGone(A,B) \rtarr \bK_G(\sE_G(B\times A))$ 
(see \autoref{span1b}) with 
the adjoint of the right vertical composite in the commutative diagram 
\begin{equation}\label{curiouser} 
\xymatrix{
\SI^{\infty}_G(X\sma B_+) \sma \SI^{\infty}_G(B_+\sma A_+) 
\ar[d]_{\sma}^{\iso} \ar[r]^-{\al\sma\al} 
& \bK_G\bP_G(X\sma B_+) \sma \bK_G\bP_G(B_+\sma A_+) \ar[d]^{\sma} \\
\SI^{\infty}(X\sma B_+\sma  B_+\sma  A_+) \ar[r]^-{\al} \ar[d]_{\SI^{\infty}_G r}
& \bK_G\bP_G(X\sma B_+\sma  B_+\sma  A_+) \ar[d]^{\bK_G\bP_G(r)}\\
\SI^{\infty}(X \sma  B_+\sma  A_+) \ar[r]^-{\al} \ar[d]_{\SI^{\infty}\pi}
& \bK_G\bP_G(X \sma  B_+\sma  A_+) \ar[d]^{\bK_G\bP_G \pi} \\ 
\SI^{\infty}_G(X \sma A_+) \ar[r]_-{\al}  & \bK_G\bP_G(X\sma A_+). } 
\end{equation}
Here $r$ is the 
retraction of based $G$-sets associated to the diagonal inclusion
and $\pi$ is the projection.
The diagram commutes by the same concatenation of commutative diagrams as in 
\autoref{keykey}.
Note that there is no need to whisker the $G$-categories $\bP_G(X\sma A_+)$ in order to get a strict functor. The spans in $\bP_G(X\sma A_+)$ are only composed on the right with spans in $\sA_G$ in this construction, and the $\Delta_B$ were already strict units on the right. Therefore use of the retraction does not destroy functoriality.
\end{defn} 

\begin{thm} Let $X$ be a based $G$-space.  Under our zigzag of equivalences, $\SI^{\infty}_G X$ 
corresponds naturally to the presheaf $(\PR(X))^G$ that sends $A$ to $\bK \big(\bP_G(X\sma A_+)^G\big)$.
\end{thm}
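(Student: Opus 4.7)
The plan is to trace $\SI^{\infty}_G X$ through the zigzag (\ref{long}) and identify the resulting presheaf on $G\sB$. Since $\bU$ sends a bifibrant $G$-spectrum $E$ to the $\sS$-presheaf on $G\sD$ that it represents, and since the remaining functors $\ga^*$, $\rh^*$, $\la^*$, and $\io^*$ in (\ref{long}) just pull presheaves back along stable equivalences of enriched categories, the problem reduces to constructing a natural $\sS$-equivalence of presheaves on $G\sB$ between $A \mapsto F_G(\SI^{\infty}_G(A_+), \SI^{\infty}_G X)^G$ and $A \mapsto \bK(\bO_G(X\sma A_+)^G)$, and then identifying the first with the $G$-fixed value of $\sP_G(X)$.

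For the pointwise identification at a single object $A$, the crucial input is the self-duality of $\SI^{\infty}_G(A_+)$ established in \myref{DELTA}. The canonical duality map
\[ \de\colon \SI^{\infty}_G X \sma \SI^{\infty}_G(A_+) \rtarr F_G(\SI^{\infty}_G(A_+), \SI^{\infty}_G X) \]
of (\ref{delta}) is then a stable equivalence. Composing with the canonical isomorphism $\SI^{\infty}_G X \sma \SI^{\infty}_G(A_+) \iso \SI^{\infty}_G(X \sma A_+)$ and with the equivariant Barratt-Priddy-Quillen equivalence $\al$ of \myref{BPQ} produces a stable equivalence
\[ \sP_G(X)(A) = \bK_G\bO_G(X\sma A_+) \simeq F_G(\SI^{\infty}_G(A_+), \SI^{\infty}_G X). \]
Passing to $G$-fixed points and applying \myref{start} identifies the target spectrum with $\bK(\bO_G(X\sma A_+)^G)$, giving the claimed value on objects.

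The hard part will be upgrading these pointwise equivalences to an equivalence of $\sS$-enriched presheaves, that is, verifying that they intertwine the contravariant composition action of $G\sB(A,B)$ on both sides. The contravariant $\sS_G$-structure on $\sP_G(X)$ is defined precisely by the diagram (\ref{curiouser}), which is the exact analogue of the composition diagram (\ref{keydiagram}) for $\sB_G$, modified only by inserting an extra smash factor $X$ at the front of the suspension $G$-spectrum terms. I would therefore repeat the large diagram chase of \S\ref{PROOF} proving \myref{MAINNew2} almost verbatim: replace the trailing factor $\bB = \SI^{\infty}_G(B_+)$ by $\SI^{\infty}_G X$ throughout, use the monoidal naturality of $\al$ supplied by \myref{BPQ2} to slide the extra factor $X$ past each square, and use the analogue of \myref{curious} extracted from (\ref{curiouser}) in place of \myref{curious} itself to identify the composition pairing with the duality pairing. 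The self-duality computation of \S\ref{selfish} continues to provide the crucial trapezoid, since the extra smash factor $X$ is inert in that computation. A final application of \myref{start} at the level of $\sS$-categories then produces the asserted natural equivalence of presheaves on $G\sB$.
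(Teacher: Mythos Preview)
Your proposal is correct and follows essentially the same route as the paper: use the BPQ equivalence $\al$ together with the self-duality map $\de$ of \myref{DELTA} for the pointwise identification, and then verify compatibility with the $\sB_G$-action via the diagram (\ref{curiouser}), which is precisely the analogue of \myref{curious} with the extra inert smash factor $X$ inserted.

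One small imprecision worth flagging: you write that the problem reduces to comparing two presheaves \emph{on $G\sB$}, listing $\la^*$ among the pullbacks you apply. But $\la$ points from $Q\sB_G$ to $\sB_G$, so to reach $G\sB$ from the $G\sD$ side you would need $\la_!$, not $\la^*$. The paper avoids this by meeting in the middle: it pulls $\sP_G(X)$ back along $\la$ to $Q\sB_G$, computes $\rh^*\ga^*\bU(\SI^{\infty}_GX)$ as $F_G(Q\sB_G(-),\sB_G(X))$ on the other side, and then writes down a single explicit map of presheaves on $Q\sB_G$ (adjoint to the composite $\sP_G(X)(A)\sma Q\sB_G(A)\to \sB_G(X)$) and checks it is a stable equivalence with a short diagram involving $\al$, $\mu$, and $\de$. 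This is a streamlined extract of the \S\ref{PROOF} chase rather than a full rerun of it, so your plan works but is somewhat heavier than what is actually needed.
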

\begin{proof}  
Note that $\bK_G\bP_G(X\sma -_+)$ is no longer a representable presheaf.  We again work with $G$-spectra 
and obtain the conclusion after passage to $G$-fixed spectra.
According to \autoref{BPQ0}, we may replace $\SI_G^\infty X$ by the positive fibrant $G$-spectrum 
$\bK_G\bP_G(X)$, which we abbreviate to $\sA_G(X)$ by a slight abuse of notation. After this
replacement, the presheaf $\bU(\SI_G^\infty X)$ may be computed as
\[ \bU(\SI_G^\infty X)(A) = F_G(RQ\sA_G(A),\sA_G(X) ).\]
Therefore, following the chain of (\ref{long}), we may compute $\rh^*\ga^*\bU(\SI_G^\infty X)$ as
\[ \rho^*\ga^*\bU(\SI_G^\infty X) \simeq F_G( Q\sA_G(-),\sA_G(X)).\]
Replacing $(B,A)$ by $(A,1)$ in (\ref{curiouser}) and recalling that $1_+ = S^0$, the right column gives the 
second map in the composite
\begin{equation}\label{sups} 
\xymatrix@1{
\PR(X)(A)\sma Q\sA_G(A) \ar[r]^-{\id\sma \la} &\PR(X)(A)\sma \sA_G(A)\ar[r]^-{\circ} & \PR(X)(1).}
\end{equation}
Its target is the $G$-spectrum $\sA_G(X)$, and its adjoint gives a map of presheaves 
\begin{equation}\label{map} \la^* \PR(X)\rtarr F_G( Q\sA_G(-),\sA_G(X))
\end{equation}
with domain $Q\sA_G$.  It remains to show that this map is an equivalence. To compute the
adjoint (\ref{map}), observe that (\ref{sups}) is the top horizontal composite in the diagram
\[\xymatrix{
\PR(X)(A)\sma Q\sA_G(A) \ar[r]^{\id\sma \la} & \PR(X)(A) \sma \sA_G(A) \ar[r]^-{\circ} & \PR(X)(1) \\
\SI^\infty_G (X\sma A_+)\sma Q\sA_G(A) \ar[u]^{\al\sma \id} & \PR(X)(A)\sma \SI^{\infty}_GA_+  \ar[u]_{\id\sma \al} \\
\SI^\infty_G(X\sma A_+) \sma \SI^{\infty}_GA_+ \ar[u]^{\id\sma \mu} \ar[ur]_{\al\sma \id} 
\ar[r]_-{\iso} & \SI^\infty_GX \sma \SI^{\infty}_G(A_+\sma A_+) \ar[r]_-{\id\sma\epz}  
 &\SI^\infty_G X. \ar[uu]_\al 
} \]
The left pentagon commutes since $\la\com \mu = \al$ and the right pentagon is a special case of (\ref{curiouser}).
Therefore the map (\ref{map}) is the top horizontal composite in the diagram
\[ \xymatrix{
\PR(X)(A) \ar[r] & F_G(\sA_G(A),\sA_G(X) ) \ar[r]^-{F_G(\la,\id)} & F_G(Q\sA_G(A),\sA_G(X)) \ar[d]^{F_G(\mu,\id)} \\
\SI^\infty_G (X\sma A_+) \ar[u]^\al \ar[r]_-\delta & F_G(\SI^\infty_GA_+,\SI^\infty_G X) 
\ar[r]_-{F_G(\id,\al)}& F_G(\SI^\infty_G A_+,\sA_G(X)).
}\]
The map $\al$ is a stable equivalence by \autoref{BPQ0}. The map $\delta$ is the stable equivalence of (\ref{delta}). 
The map $F_G(\id,\al)$ is a stable equivalence by \cite[III.3.9]{MM}. Finally, the map $F_G(\mu,\id)$ is a stable 
equivalence by \cite[Lemma~1.22]{GM0}.
\end{proof}

\section{Some comparisons of functors}\label{sec:Comparisons}

\subsection{Change of groups {and fixed point} functors}

We discuss several constructions on $G$-spectra from the point of view of \autoref{MAINNew1}. 
Categorical fixed points are already built into the setup: for any subgroup $H\subset G$, the functor of $H$-fixed points is given by evaluating presheaves at the orbit $G/H$. We will return to this in \autoref{FixedPoints}.

\begin{con}[Restriction to subgroups]\label{Restriction} Let $H\subset G$ be a subgroup. Then induction of $G$-sets provides a strong monoidal (in other words, coproduct-preserving) bifunctor $G\times_H(-)\colon H\sE \rtarr G\sE$. 
Using our models for $H\sE$ and $G\sE$, we must declare a preferred ordering for an induced $G$-set $G\times_H A$, given an ordering of the $H$-set $A$. For this, we choose an ordering of  $G/H$ as well as a set of coset representatives for $H$ in $G$. The choice of coset representatives gives a bijection of sets $G\times_H A \iso G/H \times A$, and we use the lexicographic ordering of $G/H \times A$ to order the induced $G$-set $G\times_H A$.

This extends to a (strict) 2-functor $G\times_H-\colon H\Eone \rtarr \GEone$ if, recalling that the 1-cell $I_A \in H\Eone(A,A)$ is the identity  of $A$  as in \autoref{def:whisker}, we then define $G\times_H I_A = I_{G\times_H A}$ 
for all $H$-sets $A$. For finite $H$-sets $A$ and $B$, there is a unique $G$-equivariant isomorphism $G\times_H(A\amalg B) \iso (G\times_HA) \amalg (G\times_HB)$, though it is not order-preserving in general. It follows that the induction functor gives rise to a spectral functor $\bK(G\times_H-) \colon H\sA \rtarr G\sA$. Then 
\[\bK(G\times_H-)^* \colon \mathbf{Pre}(G\sA,\sS) \rtarr \mathbf{Pre}(H\sA,\sS)\]
gives a model for the restriction $G\sS \rtarr H\sS$.
\end{con}

\begin{con}[Induction]\label{Induction} Let $H\subset G$ be a subgroup. The spectrum-level induction functor $G_+ \sma_H - \colon H\sS \rtarr G\sS$ is left adjoint to restriction. Given the description of restriction provided in \autoref{Restriction}, it follows that induction can be described as the enriched Kan extension (as in \cite[Lemma~2.2]{GM0})
\[ \bK(G\times_H-)_! \colon \mathbf{Pre}(H\sA,\sS) \rtarr \mathbf{Pre}(G\sA,\sS)\]
along the spectral functor $\bK(G\times_H-)\colon H\sA \rtarr G\sA$.
\end{con}

\begin{con}[Geometric inflation along a quotient]\label{Inflation} Let $N \trianglelefteq G$ be a normal subgroup. Then passage to $N$-fixed points defines a functor $\Fix^N\colon G\sE \rtarr G\!/\!N\sE$. Note that since $\Fix^N(A)$ is a subset of $A$, the $G/N$-set $\Fix^N(A)$ inherits an ordering from that of $A$. Moreover, $\Fix^N$ preserves pullbacks and coproducts. It follows that $\Fix^N$ gives rise to a spectral functor $\bK(\Fix^N)\colon G\sA \rtarr G\!/\! N \sA$. Then
\[\bK(\Fix^N)^* \colon \mathbf{Pre}(G\!/\!N\sA,\sS) \rtarr \mathbf{Pre}(G\sA,\sS)\]
gives a model for the geometric inflation functor, whose image consists of $G$-spectra ``concentrated over $N$''. In the language of \cite[Section~VI.5]{MM}, this is the functor $X\mapsto \widetilde{E\mathcal{F}}[N] \sma \varepsilon^\# X$, where $\varepsilon\colon G\rtarr G/N$ is the quotient homomorphism and $\varepsilon^{\#}$ is left adjoint to the $N$-fixed point functor from $G$-spectra to  $G/N$-spectra.
\end{con}

\begin{con}[Geometric fixed points]\label{GeomFixed} Let $N \trianglelefteq G$ be a normal subgroup. Then the geometric $N$-fixed points functor is left adjoint to geometric inflation. Given the description of geometric inflation provided in \autoref{Inflation}, the enriched Kan extension (as in \cite[Lemma~2.2]{GM0})
\[\bK(\Fix^N)_! \colon \mathbf{Pre}(G\sA,\sS) \rtarr \mathbf{Pre}(G\!/\!N\sA,\sS)\]
gives a model for the geometric $N$-fixed points functor $\Phi^N\colon G\sS \rtarr G\!/\!N \sS$.

This construction extends to arbitrary subgroups as follows. For a subgroup $H\subset G$, the $H$-fixed points functor $\Fix^H\colon G\sE \rtarr \sE$ gives rise to a spectral functor $\bK(\Fix^H)\colon G\sA \rtarr \sA$, and the enriched Kan extension 
\[\bK(\Fix^H)_! \colon \mathbf{Pre}(G\sA,\sS) \rtarr \mathbf{Pre}(\sA,\sS)\]
gives a model for the geometric $H$-fixed points functor $\Phi^H\colon G\sS \rtarr  \sS$.
We leave it to the reader to verify that, in the case of a normal subgroup, the two versions agree after restricting from $G/N$-spectra to underlying spectra.
\end{con}

\begin{con}[Categorical fixed points]\label{FixedPoints}
There is an inclusion $\iota\colon \sE \hookrightarrow G\sE$ of the finite sets as the $G$-trivial finite $G$-sets. This functor preserves pullbacks and coproducts and therefore induces a spectral functor $\bK(\iota)\colon  \sA \hookrightarrow G\sA$.  As generalized equivariantly in \autoref{bigboy}, spectrally enriched presheaves on finite sets are determined by their value at a one-point set,
and
\[\bK(\iota)^* \colon \mathbf{Pre}(G\sA,\sS) \rtarr \mathbf{Pre}(\sA,\sS)\simeq \sS\]
gives a model for the (categorical) $G$-fixed points functor $(-)^G \colon G\sS \rtarr \sS$. For a subgroup $H\subset G$, the $H$-fixed points functor is given by first using the restriction functor of \autoref{Restriction} and then passing to fixed points.
\end{con}

\begin{con}[$G$-trivial $G$-spectra]\label{Constant}
Left adjoint to the $G$-fixed points functor is the trivial $G$-action functor.
Given the description of $G$-fixed points provided in \autoref{FixedPoints}, the enriched Kan extension (as in \cite[Lemma~2.2]{GM0})
\[\bK(\iota)_! \colon \sS\simeq \mathbf{Pre}(\sA,\sS) \rtarr \mathbf{Pre}(G\sA,\sS)\]
gives a model for the trivial $G$-spectrum functor $\varepsilon^\# \colon \sS \rtarr G\sS$ (using the notation of \cite[Section~VI.3]{MM}).
This functor describes the tensoring of $G$-spectra over nonequivariant spectra. We return to this in \autoref{TensorsandSmash}.
\end{con}

\subsection{Fixed point orbit functors}
\label{FixedOrbitFunctors}

We return to \autoref{reassure} and give a more precise formulation.  
We know from \autoref{FixedPoints} how to pass to $H$-fixed points for each $H$, but a more functorial perspective may be illuminating.  
Again let $\sO_G$ denote the orbit category of $G$.  For a $G$-spectrum $X$, passage to $H$-fixed point spectra for $H\subset G$ gives a functor $X^{\bullet}\colon \sO_G^{op}\rtarr \sS$. Recall \autoref{AllOrb}.
By definition, $\GDOrb$ is the image of the composition $j$ of $\SI^\infty_{G,+} \colon \sO_G \rtarr G\sS$ with our bifibrant replacement functor.    Pulling back along $j$ defines a functor
\[ G\sS \xrightarrow{\bU}  \mathbf{Pre}(\GDOrb,\sS) \xrightarrow{j^*} \mathrm{Pre}(\sO_G,\sS),\]   
where the target denotes ordinary (i.e. unenriched) presheaves.
On the other hand, we have the functor $k\colon \sO_G \rtarr G\sE$ that associates to a 
map of finite $G$-sets its graph, considered as a span. This gives rise to a functor $\sO_G \rtarr G\sA$, 
which we also denote by $k$. Now pullback along $k$ gives a functor 
\[  \mathbf{Pre}(G\sA,\sS) \xrightarrow{k^*}  \mathrm{Pre}(\sO_G,\sS).\]

\begin{cor}\label{reassure2}  The zigzag of equivalences of \autoref{MAINNew1}
identifies the composition $j^* \circ \bU$ with $k^*$ up to equivalence.
\end{cor}

\subsection{Tensors with spectra and smash products}
\label{TensorsandSmash}

There is another visible identification.  The category $G\sS$ and our presheaf categories are
$\sS$-complete, so that they have tensors and cotensors over $\sS$ (see \cite[\S5.1]{GM0}). It is 
formal that the left adjoint of an $\sS$-adjunction preserves tensors and the right adjoint preserves 
cotensors.  A quick chase of our zigzag of Quillen $\sS$-equivalences gives the following conclusion.

\begin{prop} For $G$-spectra $Y$ and spectra $X$, if $Y$ corresponds to a presheaf $\sP Y$ 
under our zigzag of weak equivalences, then the tensor $Y\odot X$ corresponds to the tensor 
$\sP Y\odot X$.
\end{prop}

\begin{rem}[Smash products]
\label{Monoidal}
We have not described the behavior of smash
products under the equivalences of \autoref{MAINNew1}. On the presheaf side, one would expect to 
use Day convolution to describe the smash product, starting from the cartesian product of finite $G$-sets. 
Indeed, this is the approach taken in \cite{CMNN}, where a symmetric monoidal version of 
\autoref{MAINNew1} is given. We warn the reader, however, of two notable differences in their approach. First, in the approach 
of \cite{CMNN}, the functor from $G$-spectra to presheaves is a {\it left} adjoint, so that their right adjoint plays the role of our $\bT$ 
in \autoref{MAINOneCor}. Secondly, they produce a monoidal functor on the category of $G$-spectra by using \cite[Theorem~A.2]{CMNN}
that the category of $G$-spectra can be obtained as a monoidal category from the category of based $G$-spaces by inverting
smash products with representation spheres. 
\end{rem}

\begin{rem}
\label{MonoidalSketch}
We here give a sketch of an approach to a monoidal version of \autoref{MAINNew1}. Starting from an enriched symmetric monoidal structure on $\GDAll$,  Day convolution provides a symmetric monoidal structure on our category of spectral presheaves, and \autoref{start} can be promoted to a monoidal Quillen equivalence, as in \cite[Theorem~4.3]{ABS}. 
It then remains to equip the spectral category $G\sA$ with an enriched monoidal structure and
promote \autoref{MAINNew2} to a zig-zag of {\em monoidal} weak equivalences. 

However, there are several difficulties with this approach. First, starting with the enriched monoidal structure on 
$\GDAll$, it is clear what to do on objects, since they are in bijective correspondence with finite $G$-sets. Namely, again employing the notation of \autoref{PROOF}, the objects are of the form $R\bA = R \SI^\infty_G A_+$, 
and we define a product $\otimes$ on $\GDAll$ by  letting $R\bA \otimes R\bB$  be  
$R(\bA \sma \bB )\iso  R \SI^\infty_G(A\times B)_+$.

We next require a map of spectra
\begin{equation}\label{otimesFunctor} 
F( R\bA, R\bB) \sma F(R\bC, R\bD) \rtarr F(R\bA \otimes R\bC, R\bB \otimes R\bD). 
\end{equation}
If we had a strong monoidal fibrant replacement functor $R$, this would provide isomorphisms $R\bA \sma R\bB \iso R(\bA \sma \bB) = R\bA \otimes R\bB$. These could then be combined with the map
\[ F( R\bA, R\bB) \sma F(R\bC, R\bD) \rtarr F(R\bA \sma R\bC, R\bB\sma R\bD) \]
to obtain the  map \eqref{otimesFunctor}.
However, absent such a strong monoidal functor $R$, we do not see a way to define \eqref{otimesFunctor}.  
We shall say a bit more fibrant replacement in \autoref{Sdetails}. One way around this problem would be to rework the entire theory with orthogonal $G$-spectra replaced by the $S_G$-modules of the equivariant version \cite{MM} of \cite{EKMM}.  Since all $S_G$-modules are fibrant, that would get around this problem; some relevant details are discussed in \autoref{Burn} and \autoref{Zdetails}.

Another problem is that it is not straightforward to equip $G\sA$ with an enriched monoidal structure. Again, it is clear what to do on objects.  The machine developed in \cite{GMMOMain} does convert the product functors 
\begin{equation}
\label{GEProduct}
\xymatrix{
G\sE(B\times A) \times G\sE(D\times C) \ar[r]^-{\times} & G\sE(B\times A \times D \times C) \ar[r]^{\iso} & G\sE(B \times D \times A \times C)
}
\end{equation}
of \autoref{product}
 to morphisms of spectra
\[\bK G\sE(B\times A) \sma \bK G\sE(D\times C) \rtarr  \bK G\sE(B \times D \times A \times C)\]
 However, recall from \autoref{KGE} that the morphism spectra of $G\sA$ are defined using $G\Eone$ rather than $G\sE$, so some care is required to handle that change.  A little more seriously, even if we ignore the difference between $G\sE$ and $G\Eone$, the functors \eqref{GEProduct} do not give a strict 
 2-functor $G\Eone \times G\Eone \xrightarrow{\times} G\Eone$
since the evident diagram relating products to composition (of 1-cells) only commutes up to isomorphism.  We have not pursued this idea further, but we do not believe that the difficulties to this approach are insurmountable.
 \end{rem}
 
\section{Atiyah duality for finite $G$-sets}\label{SecAt}

It is illuminating to see that we can come very close to constructing an alternative model for 
the spectrally enriched category $\GDAll$ just by applying the suspension $G$-spectrum functor $\SI^{\infty}_G$ 
to the category of {based finite $G$-sets} and $G$-maps and then passing to $G$-fixed points. This is based on a close
inspection of classical Atiyah duality specialized to finite $G$-sets.  However, it depends on working in the
alternative category $G\sZ$ of $S_G$-modules \cites{EKMM, MM} rather than in the category $G\sS$ of 
orthogonal $G$-spectra.   Because every object of $G\sZ$ is fibrant and its suspension $G$-spectra are easily
understood, it is considerably more convenient than $G\sS$ for comparison with space level constructions.  This leads us to a 
variant, \autoref{MAINNew3}, of \autoref{MAINMAIN} that does not invoke infinite loop space theory. {It is more topological and less categorical,
and it best captures the geometric intuition behind our results.}
It is also more elementary.

\subsection{The categories $G\sZ$, $\GDZAll$, and $\DZAll$}\label{Burn}

Relevant background about $G\sZ$ appears in \autoref{Zdetails},
and we just give a minimum of 
notation here. We alert the reader to one non-standard notation.  We indicate the tensor of a based $G$-space  $X$ and a $G$-spectrum $E$ by $X\odot E = \Sigma^\infty_G X \sma E$.  Similarly, we later denote the tensor of a nonequivariant spectrum  $D$ and a $G$-spectrum $E$ by $D\odot E$.   

In analogy with \autoref{MAINOneCor}, we have the following specialization of the same general result,  \cite[Theorem~1.36]{GM0},  about stable model categories. It is discussed in \autoref{PresheafModels}.

\begin{thm}\label{MAINOneCorToo}  Let $\GDZAll$ be the full $\sZ$-subcategory of $G\sZ$ whose 
objects are cofibrant approximations of the suspension $G$-spectra $\SI^{\infty}_G(A_+)$,
where $A$ runs through the finite $G$-sets. Then there is an enriched Quillen adjunction
\[\xymatrix@1{\mathbf{Pre}(\GDZAll,\sZ) \ar@<.4ex>[r]^-\bT & G\sZ \ar@<.4ex>[l]^-{\bU}, }\]
and it is a Quillen equivalence.
\end{thm}

{We must be explicit about cofibrant approximation here.} The construction of the category $G\sZ$ of $S_G$-modules starts from the 
Lewis-May category $G\sS\!p$ of $G$-spectra, and $S_G$-modules are $G$-spectra with 
additional structure.  We have an elementary suspension $G$-spectrum functor
$\SI^{\infty}_G \colon G\sT\rtarr G\sS\! p.$
As we recall in \autoref{Zdetails}, a suspension $G$-spectrum has a canonical $S_G$-module 
structure, so that we may view $\SI^{\infty}_G$ as a functor $G\sT\rtarr G\sZ$. Moreover, with codomain 
$G\sZ$, this becomes a strong
symmetric monoidal functor.  However, the $\SI^{\infty}_G X$ are not cofibrant.  As explained in \autoref{Zdetails} below,  there is a left Quillen equivalence $\bF\colon G\sS\! p\rtarr G\sZ$ such that the composite  
$\SIG = \bF\com \SI^{\infty}_G$  takes based $G$-CW complexes $X$, such as $A_+$ for a 
finite $G$-set $A$, to cofibrant $S_G$-modules. Therefore $\SIG$ may be viewed
as a cofibrant replacement functor for $\SI^{\infty}_G$.  In particular, we write $\Sph = \SIG S^0$ 
and have a cofibrant approximation $\ga\colon \Sph\rtarr S_G$ of the unit object $S_G$. Moreover,
the cofibrant approximation $\SIG(A_+)$ is isomorphic over $\SI^{\infty}_G(A_+)$ to $\Sph\sma \SI^{\infty}_G(A_+)$.

As before, we consider finite $G$-sets $A$, $B$, and $C$, but we now agree to write  
\[ \bA = \SIG A_+,\ \ \ \bB = \SIG B_+, \ \ \ \text{and}
\ \ \ \bC = \SIG C_+.  \]
These are bifibrant objects of $G\sZ$ and we let $\GDZAll$ {and $\DZAll$} 
be the full subcategories of $G\sZ$ and 
$\sZ_G$ whose objects are the $S_G$-modules $\bA$, where $A$ runs over the 
finite $G$-sets.  Then $\DZAll$ is 
enriched in $G\sZ$ and 
$\GDZAll = (\DZAll)^G$ is enriched in the category $\sZ$ of $S$-modules.
The functor $\SIG$ is almost strong symmetric monoidal.  Precisely, by \autoref{OK} below,
 there is a natural isomorphism
\begin{equation}\label{yucky} 
\bA \sma \bB \iso \Sph\sma \SIG (A\times B)_+ 
\end{equation}
with appropriate coherence properties with respect to associativity and commutativity.
Since $S_G$ is the unit for the smash product, we can compose with 
\[ \ga\sma\id\colon \Sph\sma \SIG (A\times B)_+ \rtarr \SIG (A\sma B)_+\]
to give a pairing as if $\SIG$ were a lax symmetric monoidal functor.
However, the map $\ga\colon \Sph\rtarr S_G$ points the wrong way for 
the unit map of such a functor.

\subsection{Space level Atiyah duality for finite $G$-sets}\label{Atiyah}

To lift the self-duality of $Ho\sD_{\text{All}}$ to obtain a new model for $\GDZAll$,
we need representatives in $G\sZ$ for the maps 
\[  \et \colon S_G \rtarr \bA\sma \bA\ \ \ \text{and} \ \ \ 
\epz\colon \bA\sma \bA \rtarr S_G \]
in $\text{Ho}G\sZ$ that express the duality there.  The map $\epz$ is induced 
from the elementary map $\epz$ of \autoref{epsilon}. The observation that it plays 
a key role in Atiyah duality seems to be new.  The
definition of $\eta$ requires desuspension by representation spheres.  

Let $A$ be a finite $G$-set and let $V = \bR[A]$ be the real representation 
generated by $A$, with its standard inner product, so that $|a|=1$ for $a\in A$.  
Since we are working on the space level, we may view $A_+\sma S^V$ 
as the wedge over $a\in A$ of the spaces (not $G$-spaces) 
$\{a\}_+\sma S^V$, with $G$ acting by $g(a,v) = (ga,gv)$. There is no 
such wedge decomposition after passage to $G$-spectra.

\begin{defn}\label{defnepz} Recall that 
$\epz \colon (A\times A)_+ \rtarr S^0$ is the $G$-map defined by 
$\epz(a,b) = \ast$ if $a\neq b$ and $\epz(a,a) = 1$.  Recall too that
$(A\times B)_+$ can be identified with $A_+\sma B_+$ and that the functor $\SIG$ 
is almost strong symmetric monoidal. We shall also write $\epz$ for the composite
map of $S_G$-modules
\begin{equation}\label{epzgood}
\xymatrix@1{ \bA\sma \bA \iso \Sph\sma\SIG(A\times A)_+\ar[rr]^-{\id\sma \SIG\epz} 
& &  \Sph\sma \Sph \ar[r]^-{\ga\sma\ga} & S_G\sma S_G\iso S_G,}
\end{equation}
where the first unlabeled isomorphism is an instance of (\ref{yucky}).
\end{defn}

\begin{defn}\label{defneta} Embed $A$ as the basis of the real representation $V=\bR[A]$. 
The normal bundle of the embedding is just $A\times V$, and its Thom complex 
is $A_+\sma S^V$. We obtain an explicit tubular embedding $\nu\colon A\times V\rtarr V$ by setting 
\[ \nu(a,v) = a+\tfrac{\rh(|v|)}{|v|}v, \]
where $\rh\colon [0,\infty)\rtarr [0, d)$ is a homeomorphism for some $ d <1/2$; $\nu$ 
is a $G$-map since $|gv|=|v|$ for all $g$ and $v$. Applying the Pontryagin-Thom construction, 
we obtain a $G$-map  $t\colon S^V\rtarr A_+\sma S^V$, which is an equivariant pinch map
\[ S^V\rtarr \wed_{a\in A}S^V \iso A_+\sma S^V.\]
To be more precise, after collapsing the
complement of the tubular embedding to a point, we use $\nu^{-1}$ to expand each small
homeomorphic copy of $S^V$ to the canonical full-sized one; explicitly, 
if $|w|< d$, then
\[ \nu^{-1}(a + w) = (a,\tfrac{\rh^{-1}(|w|)}{|w|}w). \]
The diagonal map on $A$ induces the Thom diagonal
$\DE\colon A_+\sma S^V\rtarr A_+\sma A_+ \sma S^V$, and we let
\begin{equation}\label{etaA} \et  = \et_A \colon S^V\rtarr A_+\sma A_+\sma S^V
\end{equation} 
be the composite $\DE\com t$. Explicitly, 
\begin{equation}\label{etaA2} 
\et(v) = \left\{ \begin{array}{ll}
(a,a, \tfrac{\rh^{-1}(|w|)}{|w|}w) 
 & \mbox{if $v = a + w$ where $a\in A$ and $|w|< d$} \\
 \ast & \mbox{otherwise.}  
 \end{array}
 \right. 
\end{equation} 
The negative sphere $G$-spectrum $S^{-V}$ in
$G\sS\!p$ is obtained by applying the left adjoint of the $V^{th}$-space functor
to $S^0$, and $S_G$ is isomorphic (on the point-set level) to $S^V\odot S^{-V}$ as is noted nonequivariantly in  \cite[I.4.2]{LMS}\footnote{The relevant display there has a typo, $\OM^{\infty}$ for $\SI^{\infty}$.}.
Taking the tensor of $\et$ with $S^{-V}$ we obtain 
a map of $G$-spectra
\begin{equation*}
S_G\iso S^V\odot S^{-V} \rtarr (A_+\sma A_+\sma S^V)\odot S^{-V} 
\iso (A_+\sma A_+) \odot S_G \iso \SI^{\infty}_G (A_+\sma A_+).
\end{equation*}
Applying the functor $\bF$ to this map and smashing with $\Sph$ on the left, we obtain the 
map denoted $\hat{\et}_A$ in the diagram
\begin{equation}\label{etabad}
\xymatrix@1{ 
S_G \iso S_G\sma S_G &  \Sph\sma \Sph \ar[l]_-{\ga\sma\ga} \ar[r]^-{\hat{\et}_A}
& \Sph\sma \SIG{(A\times A)_+}\iso \bA\sma \bA. } 
\end{equation}
\end{defn}

The following result is a reminder about space level Atiyah duality. The notion of a 
$V$-duality was defined and explained for smooth $G$-manifolds in \cite[\S III.5]{LMS}.
Essentially, this states that the space-level maps $\eta$ and $\epz$  make $A_+$ into a self-dual $G$-space, modulo inverting the $G$-space $S^V$.
While our maps are specified precisely on the point-set level, we now pass to the homotopy category.

\begin{prop}\label{AtiyahSpace} The maps 
\[ \et\colon S^V\rtarr A_+\sma A_+\sma S^V \  \ \text{and} \  \ 
\epz\sma \id\colon A_+\sma A_+\sma S^V\rtarr S^V \] 
specify a $V$-duality between $A_+$ and itself.
\end{prop}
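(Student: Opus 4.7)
My plan is to verify, by direct computation with the explicit formulas (\ref{etaA2}) and \myref{defnepz}, the two triangle identities for a $V$-duality in the sense of \cite[III.5]{LMS}. Since the two $A_+$ factors play symmetric roles and both $\epz$ and $\et$ are invariant under their interchange, it suffices to check one triangle. This is a very special case of equivariant Atiyah duality applied to the $0$-dimensional closed smooth $G$-manifold $A\subset V=\bR[A]$, whose normal bundle is the canonically framed trivial bundle $A\times V$; on the space level the entire verification is completely explicit, so no general result need be invoked.

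The triangle to be checked is that the composite
\[
A_+\sma S^V \xrightarrow{\id\sma\et} A_+\sma A_+\sma A_+\sma S^V \xrightarrow{\text{shuffle}} A_+\sma A_+\sma S^V\sma A_+ \xrightarrow{\epz\sma\id\sma\id} S^V\sma A_+
\]
agrees, after the canonical twist $\tau\colon S^V\sma A_+\iso A_+\sma S^V$, with $\id_{A_+\sma S^V}$ up to $G$-homotopy. Unwinding (\ref{etaA2}): a non-basepoint element $(a,v)$ with $v=b+w$, $b\in A$, $|w|<d$, is sent by $\id\sma\et$ to $(a,b,b,(\rh^{-1}(|w|)/|w|)\cdot w)$, and after the shuffle the Kronecker $\delta$ nature of $\epz$ kills the contribution unless $a=b$, in which case the output is $((\rh^{-1}(|w|)/|w|)\cdot w,a)$; every other input goes to the basepoint. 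Composing with $\tau$, the resulting self-map of $A_+\sma S^V=\bigvee_{a\in A}\{a\}_+\sma S^V$ acts on the $a$-summand as the pinch-and-expand map $v\mapsto (\rh^{-1}(|v-a|)/|v-a|)(v-a)$ when $|v-a|<d$, extended by the basepoint outside the tubular ball.

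To finish, I would exhibit a $G$-homotopy from this map to the identity in two stages. First, translate each summand to re-center it at the origin by the straight-line homotopy $(a,v)\mapsto (a,v-s\cdot a)$, $s\in[0,1]$, reducing the claim to the self-map of $S^V$ given by $w\mapsto (\rh^{-1}(|w|)/|w|)\cdot w$ for $|w|<d$ and $*$ otherwise, on each wedge summand. Second, connect $\rh$ to the identity of $[0,\infty)$ through a one-parameter family $\rh_s\colon[0,\infty)\rtarr[0,d_s)$ of homeomorphisms with $d_s\nearrow\infty$; the associated family of pinch-expand maps is a $G$-homotopy to the identity of $S^V$, since at every stage the formula is purely radial and $G$ acts through $\SI_n$ on $A$ and hence preserves norms on $V=\bR[A]$. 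Both stages are manifestly $G$-equivariant and restrict correctly to each $a$-summand.

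The main obstacle will be bookkeeping: confirming the precise form of the $V$-duality triangle identity in \cite[III.5]{LMS}, arranging the shuffle isomorphism so that $\epz$ pairs the intended two $A_+$ factors, and ensuring that each intermediate map in the two-stage homotopy really does restrict $G$-equivariantly to the wedge decomposition indexed by $A$ (which it does, since $G$ permutes the summands). Once these conventions are pinned down, the verification is elementary, reflecting the fact that Atiyah duality for a finite $G$-set is at bottom a combinatorial Pontryagin--Thom construction and a Kronecker $\de$ pairing.
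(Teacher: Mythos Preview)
Your proof is correct and takes a genuinely different route from the paper's. The paper does not verify the triangle identities directly; instead it observes that the proposition is a special case of equivariant Atiyah duality for smooth closed $G$-manifolds as set up in \cite[III\S5]{LMS}, and then checks that the maps $\et$ and $\epz\sma\id$ given here agree with the specializations of the general $\et$ and $\epz$ defined there (the main content being the identification of $\epz\sma\id$ with the Pontryagin--Thom map for the composite embedding $A\hookrightarrow A\times A\hookrightarrow A\times A\times V$). The paper explicitly remarks that one \emph{could} prove the result from scratch by checking the triangle identities, which is precisely what you do.

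Your direct verification is more self-contained, and in fact the paper carries out essentially your computation later, for a different purpose: the composite self-map of $A_+\sma S^V$ that you obtain is exactly the map $\xi$ of \myref{defnxiA}, and the paper exhibits there a single explicit $G$-homotopy $h$ from $\xi$ to the identity (combining your two stages into one formula). So while the paper's proof of \myref{AtiyahSpace} is a citation, the ingredients of your argument do appear in the paper, just deployed to establish the weak unit laws for $\sA_G$ rather than the duality itself. One small comment: your shuffle is harmless but unnecessary, since $\epz$ already pairs the first two $A_+$ factors in $A_+\sma A_+\sma A_+\sma S^V$; and in your stage 2 you should say explicitly that the family $\rh_s^{-1}$ (extended by $\infty$) varies continuously as based self-maps of $S^V$, which is immediate once a concrete interpolation is chosen.
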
  

\begin{proof}
This could be proven from scratch by proving the required triangle identities, but in fact 
it is a special case of equivariant Atiyah duality for smooth $G$-manifolds, $A$ being a
$0$-dimensional example.  Our specification of $\et$ is a precise point-set level specialization of the description
of $\et$ for a general smooth $G$-manifold $M$ given in \cite[p. 152]{LMS}.   Similarly, we claim that 
our $\epz\sma \id$ is a precise point-set level specialization of 
the definition of $\epz$ for a general smooth $G$-manifold given there. Indeed, letting
$s$ be the zero section of the normal bundle $\nu$ of the embedding $A\subset \bR[A] = V$, 
we have the composite embedding
\[ \xymatrix@1{ 
A \ar[r]^-{\DE} & A\times A \ar[r]^-{s\times\mathrm{id}} & (A\times V)\times A 
\iso A\times A\times V.} \]
The normal bundle of this embedding is $A\times V$, and we may view 
\[\DE\times{\mathrm{id}}\colon A\times V \rtarr A\times A\times V\]
as giving a big tubular neighborhood. The Pontryagin-Thom map here is 
obtained by smashing the map $r\colon (A\times A)_+\rtarr A_+$ that sends
$(a,b)$ to $a$ if $a=b$ and to $\ast$ if $a\neq b$ with the identity
map of $S^V$. Composing with the map induced by the projection
$\pi\colon A_+\rtarr S^0$ that sends $a$ to $1$, this gives $\epz\sma\id$. 
We observed this factorization of $\epz$ in \autoref{epsilon}
and we have used it before, in the proof of \autoref{curious}.
\end{proof}

We obtain the spectrum level duality maps displayed in 
(\ref{epzgood}) and (\ref{etabad})  by tensoring with $S^{-V}$, applying the functor $\Sph\sma \bF$, and composing 
with $\ga$. 

\subsection{The weakly unital categories $G\sB$ and $\sB_G$}

Since the $G$-spectra $\bA$ are self-dual, $F_G(\bA,\bB)$ is naturally isomorphic to 
$\bB\sma \bA$ in $\text{Ho}G\sZ$, and the composition and unit
\begin{equation}\label{real} F_G(\bB,\bC) \sma F_G(\bA, \bB)
\rtarr F_G(\bA,\bC) \ \ \ \text{and} \ \ \ S_G \rtarr F_G(\bB,\bB) 
\end{equation}
can be expressed as maps
\begin{equation}\label{fake} \bC \sma \bB \sma \bB \sma \bA \rtarr \bC\sma \bA 
\ \ \ \text{and} \ \ \ S_G \rtarr \bA\sma \bA
\end{equation}
in $\text{Ho}G\sZ$. We want to understand these maps in terms
of duality in $G\sZ$, without use of infinite loop space theory.  However, since we are working
in $G\sZ$, we must take the isomorphisms (\ref{yucky}) and the cofibrant approximation 
$\ga\colon \mathbf{S}_G\rtarr S_G$ into account, and we cannot expect to have strict units. 
The notion of a weakly unital enriched category was introduced in \cite[\S3.5]{GM0} to 
formalize what we see here. 

Thus we shall construct a weakly unital $G\sZ$-category $\sB_G$, analogous to $\sA_G$, and 
compare it with $\DZAll$.  The $G$-fixed category $G\sB$ will be a weakly unital $\sZ$-category.
The objects of $\sB_G$ and $G\sB$ are the $S_G$-modules $\bA$ for finite $G$-sets $A$, as in \autoref{Burn}.
 The morphism 
$S_G$-modules of $\sB_G$ are $\sB_G(\bA,\bB) =\bB\sma \bA$.  Composition is given by the maps
\begin{equation}\label{compZ}
\id\sma\epz\sma\id\colon \bC \sma \bB \sma \bB \sma \bA \rtarr \bC\sma \bA,
\end{equation}
where $\epz$ is the map of (\ref{epzgood}); compare \autoref{curious}. 

As recalled in \autoref{duality}, the adjoint $\tilde{\epz}\colon \bA\rtarr D\bA = F_G(\bA,S_G)$ of 
$\epz$ is a stable equivalence, and we have the composite stable equivalence
\begin{equation}\label{deltaZ}
\de = \ze\com (\id\sma\tilde{\epz}) \colon \bB\sma \bA \rtarr \bB\sma D\bA \rtarr F_G(\bA,\bB).
\end{equation}
Formal properties of the adjunction ($\sma$,$F_G$) give the 
following commutative diagram in $G\sZ$, which uses $\de$ to
compare composition in $\sB_G$ with composition in $\DZAll$. 
\begin{equation}\label{compZZ} \xymatrix{
\bC \sma \bB \sma \bB \sma \bA \ar[rr]^-{\id\sma \epz\sma \id}
\ar[d]_{\mathrm{id}\sma\tilde{\epz}\sma\mathrm{id}\sma\tilde{\epz}}
& & \bC \sma \bA \ar[d]^{\mathrm{id}\sma \tilde{\epz}} \\
\bC \sma D\bB \sma \bB \sma D\bA 
\ar[rr]^-{\mathrm{id}\sma \epz \sma \mathrm{id}} \ar[d]_{\ze\sma\ze}
& & \bC \sma  D\bA \ar[d]^{\ze}\\
F_G(\bB,\bC) \sma F_G(\bA, \bB)\ar[rr]_-{\com} & &  F_G(\bA,\bC) }
\end{equation}
At the bottom, we do not know that the function $S_G$-modules or their smash product are cofibrant, 
but all objects at the top are cofibrant and thus bifibrant. In general, to compute the smash product 
of $G$-spectra $X$ and $Y$ in the homotopy category, we should take the smash product of cofibrant 
approximations $\bQ X$ and $\bQ Y$ of $X$ and $Y$. Since all objects of $G\sZ$ are fibrant, to compute a map 
$X\sma Y\rtarr Z$ in the homotopy category, we should represent it by a map $\bQ X\sma \bQ Y\rtarr \bQ Z$ and 
take its homotopy class. The diagram displays such a cofibrant approximation of the composition in $\DZAll$.

Specialized to our context of a category with self-dual objects, the definition (\cite[Definition~3.25]{GM0}) of a weakly unital $G\sZ$-category  requires, for each object $\bA$, a ``weak unit map" $\hat{\et}_A\colon \bQ S_G \rtarr \bA \sma \bA$ for some chosen cofibrant approximation $\ga\colon \bQ S_G \rtarr S_G$,  together with a weak equivalence 
$\hat{\xi}_\bA\colon \bA \xrightarrow{\simeq} \bA$ such that certain unit diagrams relating $\hat{\et}_A$, 
$\hat{\xi}_A$ and composition commute.  We are led by (\ref{etabad}) to choose our cofibrant approximation $\ga$ to be 
$ \ga\sma \ga\colon \Sph\sma\Sph\rtarr S_G\sma S_G\iso S_G,$ and to take  
$\hat{\et}_A\colon \Sph\sma\Sph\rtarr \bA \sma \bA$ to be the map displayed in \autoref{etabad}.  After composing with $\de\colon \bA \sma \bA \rtarr F_G(\bA,\bA)$, $\hat{\eta}_A$ is a representative in $G\sZ$ for the unit map $S_G \rtarr F_G(\bA,\bA)$ that exists in $\mathrm{Ho} G\sZ$. Finally, we specify the required equivalence 
$\hat{\xi}_A\colon \bA \xrightarrow{\simeq} \bA$.

\begin{defn}\label{defnxiA}  Let $V=\bR[A]$.
For $a\in A$, define $\xi_a\colon \{a\}_+\sma S^V\rtarr \{a\}_+\sma S^V$ by
\begin{equation}\label{xiA1} 
\xi_a(a,v) = \left\{ \begin{array}{ll}
(a,(\rh^{-1}(|w|)/|w| )w) 
 & \mbox{if $v= a+w$ and $|w|< d$} \\
 \ast & \mbox{otherwise,}  
 \end{array}
 \right. 
\end{equation} 
where $\rh$ is as in \autoref{defneta}. 
Then the wedge of the $\xi_a$ is a $G$-map 
\begin{equation}\label{xiA2} 
\xi_A \colon A_+\sma S^V \rtarr A_+\sma S^V;
\end{equation}  
$\xi_A$ is $G$-homotopic to the identity map of $A_+\sma S^V$ via the explicit $G$-homotopy
\[ h(a,v,t) =\left\{ \begin{array}{ll}
(a,v) & \mbox{if $t=0$ or $v=a$} \\
(a,(1-t)v + t(\rh^{-1}(t|w|)/|w|)w) & \mbox{if $v=a+w$ and $t|w| < d$} \\
\ast & \mbox{otherwise.}
\end{array} \right. \] 
{
Tensoring with $S^{-V}$ and using the natural isomorphisms 
\[ (X\sma S^V)\odot S^{-V}\iso X\odot S_G \iso \SI^{\infty}_G X \]
for based $G$-spaces $X$, we see that the space level $G$-equivalence $\xi_A$ induces a spectrum level $G$-equivalence 
$\hat{\xi}_A\colon \bA\rtarr\bA$.}
\end{defn}

It is a bit tedious to verify that our definitions make $\sB_G$ into a weakly unital $G\sZ$-category, in the sense specified in \cite[Definition~3.25]{GM0}. Here are the details.

With $\et_A$ as specified in (\ref{etaA}), easy and perhaps illuminating inspections show that the following 
unit diagrams already commute in $G\sT$, before passage to homotopy. In both, $A$ and $B$ are finite $G$-sets. 
In the first, $V= \bR[A]$. In the second, 
$W= \bR[B]$ and we move $S^W$ from the right to the left for clarity.
\[ \xymatrix{
B_+\sma A_+\sma S^V \ar[r]^-{\id\sma\et_A} \ar[d]_{\id\sma \xi_A}
&   B_+\sma A^3_+ \sma S^V \ar[dl]^-{\quad \id\sma\epz\sma\id}\\
B_+\sma A_+ \sma S^V & } 
\ \ \text{and} \ \ 
\xymatrix{
\ar[d]_{\xi_B \sma \id_A} S^W\sma B_+\sma A_+\ar[r]^-{\et_B\sma\id}
&  S^W\sma B^3_+\sma A_+  \ar[dl]^{\quad \id\sma\epz\sma\id}\\
S^W\sma B_+\sma A_+ } \]   

Tensoring with $S^{-V}$ { and $S^{-W}$} and using (\ref{yucky}) to pass to smash products of
$S_G$-modules, a little diagram chase shows that the previous pair of diagrams 
in $G\sT$ gives rise to the following pair of commutative diagrams in $G\sZ$.  
These express the unit laws for a weakly unital $G\sZ$-category $\sB_G$ \cite[Definition~3.25]{GM0} 
with objects the $\bA$ and composition as specified in (\ref{compZ}). 
Again, the cited unit laws 
allow us to start with any chosen cofibrant approximation $\ga\colon \bQ S_G \rtarr S_G$ of 
the unit $S_G$, and we were led by (\ref{etabad}) to choose our cofibrant approximation $\ga$ to be 
$ \ga\sma \ga\colon \Sph\sma\Sph\rtarr S_G\sma S_G\iso S_G.$
The space level diagrams  above induce the required spectrum level diagrams
\[ \xymatrix{
\ar[d]_{\id\sma\hat{\xi}_A\sma \ga} \bB\sma \bA \sma \bQ S_G \ar[r]^-{\id\sma\hat{\et}_A}
&  \bB\sma \bA\sma \bA\sma \bA  \ar[d]^{\com}\\
\bB\sma \bA \sma S_G \ar[r]_{\iso} & \bB\sma \bA } 
\ \ \text{and} \ \ 
\xymatrix{
\ar[d]_{\ga\sma \hat{\xi}_B\sma \id} \bQ S_G \sma  \bB\sma \bA\ar[r]^-{\hat{\et}_B\sma\id}
& \bB\sma \bB\sma \bB\sma \bA  \ar[d]^{\com}\\
S_G\sma \bB\sma \bA \ar[r]_{\iso}  & \bB\sma \bA. } \]

Taking $A= S^0$ in our second space level diagram and changing $B$ to $A$, we also obtain the following 
commutative diagrams in $G\sZ$, where the second diagram is adjoint to the first. 
\begin{equation}\label{unitZ}
 \xymatrix{
\ar[d]_{\ga\sma \hat{\xi}_A} \bQ S_G \sma \bA   \ar[r]^-{\hat{\et}_A\sma\id}
& \bA\sma \bA\sma \bA  \ar[d]^{\id\sma \epz}\\
S_G \sma \bA  \ar[r]_-{\iso} & \bA } 
\ \ \text{and} \ \ 
\xymatrix{
\bQ S_G \ar[d]_{\ga} \ar[r]^-{\hat{\et}_A} & \bA\sma \bA \ar[r]^-{\id\sma \tilde{\epz}} & \bA\sma D\bA  \ar[d]^{\ze}\\
S_G \ar[r]_-{\et} & F_G(\bA,\bA)  \ar[r]_-{F_G(\hat{\xi}_A,\id)} & F_G(\bA,\bA) } 
\end{equation}
Here $\et$ at the bottom left of the right diagram is adjoint to the identity map of $\bA$. 
 In effect, this uses $\de = \ze\com(\id\sma\tilde{\epz})$ to compare  the unit 
$S_G \xrightarrow{\et} F_G(\bA,\bA)$ in $\DZAll$ with the ``weak unit" $S_G \leftarrow \bQ S_G \rtarr  \bA \sma \bA$ in $\sB_G$.

\subsection{The category of presheaves with domain $G\sB$}\label{MAINAt}

The diagrams (\ref{compZZ}) and (\ref{unitZ}) show that the maps $\de\colon \bA\sma\bB \rtarr F_G(\bA,\bB)$ 
specify a map of weakly unital $G\sZ$-categories from the weakly unital $G\sZ$-category $\sB_G$
to the (unital) $G\sZ$-category $\DZAll$. Passing to $G$-fixed points, we obtain
a weakly unital $\sZ$-category $G\sB$ and a map $\de\colon G\sB\rtarr \GDZAll$
of weakly unital $\sZ$-categories.  Weakly unital presheaves and presheaf 
categories are defined in \cite[Definition~3.25]{GM0}.  By \cite[Remark~3.26]{GM0},
we obtain the same category of presheaves 
$\mathbf{Pre}(\GDZAll,\sZ)$
using unital or
weakly unital presheaves. Since $\de$ is an equivalence, we can adapt the 
methodology of \cite[\S2]{GM0} to complete the proof of the following theorem, using the details
relating the functor $\SIG$ to smash products from \autoref{Zdetails}.  Since we 
find the use of weakly unital categories unpleasant and our main result
\autoref{MAINNew1} more satisfactory, we shall leave the details to the interested reader. 

\begin{thm}\label{MAINNew3} The categories $\mathbf{Pre}(G\sB,\sZ)$ and $\mathbf{Pre}(\GDZAll,\sZ)$ are Quillen equivalent.
\end{thm}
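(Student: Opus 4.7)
The plan is to mimic the strategy used in \cite[\S2]{GM0} for (strictly unital) enriched categories, adapted to the weakly unital setting introduced in \cite[\S3.5]{GM0}. The key input is that the comparison map $\delta\colon G\sA \rtarr G\sD$ is a levelwise stable equivalence: on each pair $(A,B)$, the map
\[ \delta = \ze\com(\id\sma\tilde{\epz})\colon \bB\sma \bA \rtarr \bB\sma D\bA \rtarr F_G(\bA,\bB) \]
is a stable equivalence by \myref{DELTA} (applied to the $S_G$-modules $\bA$, whose self-duality in $\text{Ho}G\sZ$ follows from \S\ref{Atiyah}). Thus $\delta$ is a weak equivalence of weakly unital $\sZ$-categories in the sense of \cite[\S3.5]{GM0}.

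First I would equip both $\mathbf{Pre}(G\sA,\sZ)$ and $\mathbf{Pre}(G\sD,\sZ)$ with the projective $\sZ$-model structures, where fibrations and weak equivalences are detected objectwise in the stable model structure on $\sZ$. On the $G\sD$ side this is standard \cite[\S2]{GM0}; on the $G\sA$ side one invokes \cite[3.26]{GM0} to identify the category of weakly unital presheaves with the category of ordinary (strict) presheaves over a suitable enlargement, so the same cofibrantly generated construction applies.

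Next I would produce the Quillen adjunction $(\delta_!,\delta^*)$, where $\delta^*$ is precomposition with $\delta$ and $\delta_!$ is the enriched left Kan extension along $\delta$, computed by a coend over the weakly unital $\sZ$-category $G\sA$ (the relevant coend makes sense in the weakly unital setting once one inserts the homotopies recorded in (\ref{compZZ}) and (\ref{unitZ})). Since weak equivalences and fibrations in both model categories are defined objectwise, $\delta^*$ automatically preserves them, yielding a Quillen adjunction. To upgrade this to a Quillen equivalence, I would verify that the derived unit $\eta\colon X\rtarr R\delta^* L\delta_! X$ is a weak equivalence on a set of cofibrant generators, namely the cells built out of representables $G\sA(-,\bA)\odot K$ for $K$ a generating cofibration of $\sZ$. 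On a representable, $L\delta_!$ computes to (a cofibrant model for) the corresponding $G\sD$-representable $G\sD(-,\bA)$, and the unit map is essentially $\delta$ itself, which is a levelwise stable equivalence by our key input.

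The main obstacle is bookkeeping the weak unitality throughout. In the strict case the enriched Yoneda lemma tells one immediately that $L\delta_!$ sends representables to representables; in the weakly unital case one must verify the analogous statement using the diagrams (\ref{unitZ}) that relate $\et$, $\ga$, and $\xi$, and check that no strict unit is required to push the proofs of \cite[\S2.4, \S5]{GM0} through. This is what the authors have in mind when they say that adapting the methodology of \cite[\S2]{GM0} suffices, and it is also why they prefer the formulation of \myref{MAINNew1}, which avoids weak unitality entirely.
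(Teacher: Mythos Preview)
Your proposal is correct and matches the paper's own approach: the paper does not give a full proof either, but merely observes that $\de\colon G\sA\rtarr G\sD$ is a weak equivalence of weakly unital $\sZ$-categories and then explicitly defers to the reader the task of adapting the change-of-domain machinery of \cite[\S2]{GM0} to the weakly unital setting of \cite[\S3.5]{GM0}. Your outline of the $(\de_!,\de^*)$ adjunction, the objectwise model structures, and the verification on representables is exactly the adaptation the authors have in mind, and your closing paragraph correctly identifies both the residual bookkeeping and the reason the authors prefer \myref{MAINNew1}.
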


\section{Appendix: Enriched model categories of $G$-spectra}\label{background}

The results in this section show how to model categories of $G$-spectra as categories of presheaves of 
spectra, where $G$ is any compact Lie group.  We specialize results of \cite{GM0} to provide and compare two such models.
More precisely, in  \autoref{PresheafModels} we establish Theorems \ref{MAINOneCor} and \ref{MAINOneCorToo},  which 
state that $G$-spectra can be modeled as presheaves of spectra in both the orthogonal and $S$-module contexts.
In \autoref{OldMain}, we compare these two presheaf models. In sections \ref{Sdetails} and \ref{Zdetails} we discuss suspension 
spectra for orthogonal spectra and $S$-modules, respectively, in order to be precise about the domain categories for our presheaves.
We shall rely on \cites{EKMM, LMS, MM, MMSS} for definitions of the relevant categories.

\subsection{Presheaf models for categories of $G$-spectra}\label{PresheafModels}

We focus on two categories of $G$-spectra treated in detail in \cite{MM}.  We have the closed 
symmetric monoidal category $\sS$ of nonequivariant orthogonal 
spectra \cite{MMSS}. Its function spectra are denoted $F(X,Y)$.  We also have the closed symmetric 
monoidal category $G\sS$ of orthogonal $G$-spectra for a fixed $G$-universe $U$ \cite{MM}.
Its function $G$-spectra are denoted $F_G(X,Y)$. 
In contrast to the previous sections, in this subsection and the next we allow $G$-spectra to be indexed over any $G$-universe.   The homotopy type of $F_G(X,Y)$ very much  depends on the choice of universe.
Then $G\sS$ is enriched over $\sS$
via the $G$-fixed point spectra $F_G(X,Y)^G$. In terms of the general context of \cite{GM0}, we are
taking $\sV= \sS$ and $\sM = G\sS$. We have stable model structures on $\sS$ and $G\sS$ 
\cites{MM, MMSS}.  

Then 
\cite[Theorem~1.36]{GM0} specializes to give
\autoref{MAINOneCor}.  It  also gives the following more general result, in which $G$ can be a compact Lie group and $G$-spectra can be indexed on any universe.  (See also \cite[Example~3.4(i)]{SS}).

\begin{thm}\label{MAINOne} Let  $G\sD_S$ be the full $\sS$-subcategory of $G\sS$ whose objects are fibrant approximations of the suspension $G$-spectra $\SI^{\infty} X_+$ for all $X$ in any set $S$ of compact $G$-spaces that contains $G/H$ for at least one $H$ in each conjugacy class of closed subgroups of $G$. Then there is an enriched Quillen adjunction
\[\xymatrix@1{\mathbf{Pre}(G\sD_S,\sS) \ar@<.4ex>[r]^-\bT & G\sS \ar@<.4ex>[l]^-{\bU}, }\]
and it is a Quillen equivalence.  If $S\subset T$ are as prescribed and 
$$\bR\colon \mathbf{Pre}(G\sD_T ,\sS) \rtarr \mathbf{Pre}(G\sD_S,\sS)$$
is the restriction along the inclusion $G\sD_S \rtarr G\sD_T$,  then $\bR\com \bU_T = \bU_S$ and therefore $R$ induces an equivalence of presheaf homotopy categories.
\end{thm}

\begin{rem}  Adapting our work for finite groups to incomplete universes would requre us to use incomplete Mackey functors  and to reconcile the conflict between needing to use all orbits $G/H$ to obtain generators for $\text{Ho} G\sS$ and needing to use only those orbits $G/H$ that embed in the given universe to have self-duality of orbits, which is vital to our theory but irrelevant to \autoref{MAINOne}.
\end{rem}

We have a second specialization of \cite[Theorem~1.36]{GM0}.  We have the closed symmetric monoidal
category $\sZ$ of nonequivariant $S$-modules \cite{EKMM}.\footnote{The notation $\sS$ is short 
for $\sI\sS$ and the notation $\sZ$ is short for $\sM_S$ in the original sources; as a silly 
mnemonic device, $\sZ$ stands for the $Z$ in the middle of Elmendorf-KriZ-Mandell-May.} Its function 
spectra are again denoted $F(X,Y)$.  We also have the closed symmetric monoidal category $G\sZ$ of 
$S_G$-modules (for a fixed $G$-universe $U$ as above) \cite{MM}. Its function $G$-spectra are denoted 
$F_G(X,Y)$.  Then $G\sZ$ is enriched over $\sZ$ via the $G$-fixed point spectra $F_G(X,Y)^G$.  We are
taking $\sV= \sZ$ and $\sM = G\sZ$. We have stable model structures on $\sZ$ and $G\sZ$ 
\cites{EKMM, MM}. Again, \cite[Theorem~1.36]{GM0} specializes to give  \autoref{MAINOneCorToo}. It also gives the following more general result, in which $G$ can be a compact Lie group and $G$-spectra can be indexed on any universe.

\begin{thm}\label{MAINOneToo} Let  $G\sD^{\sZ}_S$ be the full $\sS$-subcategory of $G\sZ$ whose objects are cofibrant approximations of the suspension $G$-spectra $\SI^{\infty} X_+$ for all $X$ in any set $S$ of compact $G$-spaces that contains $G/H$ for at least one $H$ in each conjugacy class of closed subgroups of $G$. Then there is an enriched Quillen adjunction
\[\xymatrix@1{\mathbf{Pre}(G\sD^{\sZ}_S,\sZ) \ar@<.4ex>[r]^-\bT & G\sZ \ar@<.4ex>[l]^-{\bU}, }\]
and it is a Quillen equivalence.  If $S\subset T$ are as prescribed and 
$$\bR\colon \mathbf{Pre}(G\sD_T ,\sZ) \rtarr \mathbf{Pre}(G\sD_S,\sZ)$$
is the restriction along the inclusion $G\sD^{\sZ}_S \rtarr G\sD^{\sZ}_T$,  then $\bR\com \bU_T = \bU_S$ and therefore $R$ induces an equivalence of presheaf homotopy categories.
\end{thm}

\begin{rem}\label{bigboy}  When $G$ is finite, we focus on the set $S = Orb$ of all orbit $G$-sets $G/H$ and the set $T=All$ of all finite $G$-sets. Here we can obtain an inverse equivalence to $\bR$ 
by sending a presheaf defined on $S$ to an {\em additive} presheaf defined on $T$, where additivity requires a presheaf that  sends disjoint unions in $T$ to finite products in $G\sS$ or in $G\sZ$.  Thus an interpretation of the equivalence of presheaves on $\GDOrb$ with presheaves on $\GDAll$ is that presheaves on $\GDAll$ are equivalent to additive presheaves.  The intuition is that the spectral enrichment builds in additivity, just as functors enriched over abelian groups automatically preserve coproducts. 
\end{rem}

Homotopically, Theorems \ref{MAINOne} and \ref{MAINOneToo}  are essentially the same result 
since $G\sS$ and $G\sZ$ are Quillen equivalent. On the point set level they are quite different, and they 
have different virtues and defects. 

We say just a bit about the proofs of these theorems.  By \cite[Theorem~4.32]{GM0},
the presheaf categories used in them are well-behaved model categories.
The acyclicity condition there holds in \autoref{MAINOne} because $\sS$ satisfies the monoid axiom, 
by \cite[12.5]{MMSS}. It holds in \autoref{MAINOneToo} by use of the 
``Cofibration Hypothesis'' of \cite[p. 146]{EKMM}, which also holds equivariantly.
The orbit $G$-spectra give compact generating sets in both $\text{Ho}(G\sS)$ and 
$\text{Ho}(G\sZ)$. We require bifibrant representatives. In \autoref{MAINOne}, 
the orbit $G$-spectra are cofibrant, and fibrant approximation makes them bifibrant.

By contrast, in \autoref{MAINOneToo}, all $S_G$-modules are fibrant, and cofibrant approximation makes them bifibrant. 
Here cofibrant approximation is given by a well understood left adjoint that very nearly preserves smash products, 
as we shall explain in \autoref{Zdetails}.

Technically, \cite[Theorem~1.36]{GM0} requires {\em either} that the unit object of the enriching category $\sV$ be 
cofibrant {\em or} that every object in $\sV$ be fibrant.   The first hypothesis holds in $\sS$ and the second holds 
in $\sZ$. It is impossible to have both of these conditions in the same symmetric monoidal model category for the 
stable homotopy category \cites{Lewis, MayRant}. That is a key reason that both of these results are of interest.

\subsection{Comparison of presheaf models of $G$-spectra}\label{OldMain}
Theorems \ref{MAINOne} and \ref{MAINOneToo} are related by the following result, which is
\cite[IV.1.1]{MM}; the nonequivariant special case is \cite[I.1.1]{MM}. In this result,
$G\sS$ is given its positive stable model structure from \cite{MM} and is denoted 
$G\sS_{pos}$ to indicate the distinction; in that model structure, the sphere 
$G$-spectrum in $G\sS$, like the sphere $G$-spectrum in $G\sZ$, is not cofibrant.
In \cite{MM}, the result is proven for genuine $G$-spectra for compact Lie groups $G$.  
For arbitrary topological groups $G$, the same proof applies to classical $G$-spectra, that is $G$-spectra  indexed on a universe with trivial $G$-action.

\begin{thm}\label{NNsharp}  There is a Quillen
equivalence
\[\xymatrix@1
{G\sS_{pos} \ar@<.4ex>[r]^-\bN & G\sZ \ar@<.4ex>[l]^-{\bN^{\#}}. }\]
The functor $\bN$ is strong symmetric monoidal, hence $\bN^{\#}$ is
lax symmetric monoidal.
\end{thm}

The identity functor is a left Quillen equivalence $G\sS_{pos} \rtarr G\sS$. 
Therefore Theorems \ref{MAINOne}, \ref{MAINOneToo}, and \ref{NNsharp}, 
have the following immediate consequence. 

\begin{cor}\label{MAINOneTwo} The categories $\mathbf{Pre}(\GDOrb,\sS)$ and 
$\mathbf{Pre}(\GDZOrb,\sZ)$ 
are Quillen equivalent.  More precisely, there are left Quillen equivalences
\[  \mathbf{Pre}(\GDOrb,\sS) \rtarr G\sS \ltarr G\sS_{pos} \rtarr G\sZ \ltarr \mathbf{Pre}(\GDZOrb,\sZ). \]
\end{cor} 

In fact, we can compare the $\sS$-category $\GDOrb$ with the $\sZ$-category 
$\GDZOrb$ via the right adjoint $\bN^{\#}$.  The adjunction 
\[\xymatrix@1
{G\sS_{pos} \ar@<.4ex>[r]^-\bN & G\sZ \ar@<.4ex>[l]^-{\bN^{\#}} }\]
is tensored over the adjunction
\[\xymatrix@1
{\sS_{pos} \ar@<.4ex>[r]^-\bN & \sZ \ar@<.4ex>[l]^-{\bN^{\#}} }\]
in the sense of \cite[Definition~3.20]{GM0}.  Indeed, since $G\sS$ is a bicomplete $\sS$-category, it is 
tensored over $\sS$. While a more explicit definition is easy enough,  for a spectrum $X$ and $G$-spectrum $Y$ we can define the $G$-spectrum
$Y\odot X$ to be $Y\sma i_*\epz^*X$, where $i_*\epz^*\colon \sS\rtarr G\sS$ is the 
change of group and universe functor associated to $\epz\colon G\rtarr e$ that assigns a 
genuine $G$-spectrum to a nonequivariant spectrum. The same is true with $\sS$ replaced by $\sZ$. 
These functors are discussed in both contexts and compared in \cite{MM}. Results there (see \cite[IV.1.1]{MM}) 
imply that 
\[ \bN Y\odot \bN X \iso \bN(Y\odot X), \]
which is the defining condition for a tensored adjunction.
Now \cite[Corollary~3.24]{GM0} gives that the $\sS$-category $\bN^{\#}\GDZOrb$ is quasi-equivalent 
to $\GDOrb$. Using \cite[Remark~2.15 and Theorem~3.17]{GM0}, this implies a direct proof of the Quillen 
equivalence of \autoref{MAINOneTwo}. Therefore Theorems \ref{MAINOne} and \ref{MAINOneToo} 
are equivalent: each implies the other.

We reiterate the generality: the results above do {\em not} require $G$ to be finite.
In that generality, we do not know how to simplify the description of the domain 
category $\GDOrb$ to transform it into a weakly equivalent $\sS$-category or $\sZ$-category 
that is intuitive and perhaps even familiar, something accessible to study independent of 
knowledge of the category of $G$-spectra that we seek to understand. 
Our main theorem shows how to do just that when $G$ is finite.

\subsection{Suspension spectra and fibrant replacement functors in $G\sS$}\label{Sdetails}

We here give some observations relevant to understanding the category $\GDOrb$ of \autoref{MAINOne}.
From now on, the group $G$ is finite and the universe is complete unless otherwise specified.

For an inner product space $V$ and a based $G$-space $X$, the $V^{th}$ space of  the orthogonal $G$-spectrum
$\SI^{\infty}_GX$ is $X\sma S^V$. {The functor $\SI^{\infty}_G$, also denoted $F_0$, 
is left adjoint to the zero$^{th}$ space functor $(-)_0\colon G\sS\rtarr G\sT$.  Nonequivariantly, 
it is part of \cite[1.8]{MMSS} that for based spaces $X$ and $Y$, $F_0 X\sma F_0Y$ 
is naturally isomorphic to $F_0(X\sma Y)$. The categorical proof of that result 
in \cite[\S21]{MMSS} applies equally well equivariantly to give the following  result. 

\begin{prop} The functor $\SI^{\infty}_G\colon G\sT\rtarr G\sS$ is strong
symmetric monoidal.
\end{prop}

Therefore the zero$^{th}$ space functor is
lax symmetric monoidal, but of course that functor is not homotopically meaningful 
except on objects that are fibrant in the stable model structure. There is no
known fibrant replacement functor in that model structure that is well-behaved
with respect to smash products.}
Recall from \autoref{MonoidalSketch} that the existence of a monoidal fibrant replacement functor is relevant to a monoidal version of our main result.

Although it is less useful for our purposes, we point out two different constructions of monoidal fibrant replacement functors in the {\em positive} stable model structure.
The first is immediate from  \autoref{NNsharp} but does not appear in the literature.
  
\begin{prop}\label{fibrant}  The unit $\et\colon E\rtarr \bN^{\#}\bN E$ 
of the adjunction between $G\sS$ and $G\sZ$ specifies a lax monoidal fibrant 
replacement functor on cofibrant objects for the positive stable model structure  $G\sS_{\mathrm{pos}}$.
\end{prop}

\begin{rem} Nonequivariantly, Kro \cite{Kro} has given a different lax monoidal
positive fibrant replacement functor for orthogonal spectra.  His construction does
not require restriction to cofibrant objects.  Parenthetically, as he notes, it does not apply to symmetric spectra.  However, by \cite[3.3]{MMSS},  
the unit $E\rtarr \bN^{\sharp}\bU\bP\bN E$ of the composite of the 
adjunction $(\bP,\bU)$ between symmetric and orthogonal spectra and
the adjunction $(\bN,\bN^{\sharp})$ gives a lax monoidal positive fibrant
replacement functor for symmetric spectra.
\end{rem}

Unfortunately the restriction to the positive model structure in \autoref{fibrant} is necessary, 
and the only fibrant approximation functor we know of for use with the stable model structure employed in \autoref{MAINOne} is that given by the small object argument.  The point is that the suspension
$G$-spectra $\SI^{\infty}_G (G/H_+)$ are cofibrant but not positive cofibrant.

Nonequivariantly, a homotopically meaningful version of the adjunction
$(\SI^{\infty},\OM^{\infty})$ has been worked out for symmetric spectra 
by Sagave and Schlichtkrull \cite{SaSc} and for symmetric and orthogonal spectra
by Lind \cite{Lind}, who compares his constructions with the adjunction 
$(\SI^{\infty},\OM^{\infty})$ in $\sS\!p$ (see below)
and with its analogue for $\sZ$. This generalizes to the equivariant 
context, although details have not been written down.

\subsection{Suspension spectra and smash products in $G\sZ$}\label{Zdetails}
We here give some observations relevant to understanding the category $\GDZOrb$ of \autoref{MAINOneToo}.
In particular, we give properties of cofibrant approximations of suspension spectra that are used in \autoref{SecAt}. 
For more information, see \cite[XXIV]{EHCT}, \cite[\S IV.2]{MM}, and the nonequivariant precursor \cite{EKMM}.

We have a category $G\sP$ of (coordinate-free)-prespectra. Its objects $Y$
are based $G$-spaces $Y(V)$ and based $G$-maps $Y(V)\sma S^W\rtarr Y(W-V)$
for $V\subset W$.  Here $V$ and $W$ are sub inner product spaces of a 
$G$-universe $U$.  A $G$-spectrum is a $G$-prespectrum $Y$ whose adjoint
$G$-maps $Y(V)\rtarr \OM^{W-V}Y(W)$ are homeomorphisms. The (Lewis-May) category $G\sS\!p$ of 
$G$-spectra is the full subcategory of $G$-spectra in $G\sP$.   The suspension $G$-prespectrum
functor $\PI$ sends a based $G$-space $X$ to $\{ X\sma S^V\}$.  There is a left adjoint
spectrification functor $L\colon G\sP \rtarr G\sS\!p$, and the suspension $G$-spectrum 
functor $\SI_G^{\infty}\colon G\sT\rtarr G\sS\! p$ is $L\com\PI$. Explicitly, let 
\[ Q_G X = \colim \OM^V\SI^V X, \] 
where $V$ runs over the
finite dimensional subspaces of a complete $G$-universe $U$.  Then the $V^{th}$
$G$-space of $\SI_G^{\infty} X$ is $Q_G \SI^V X$.

All objects of $G\sS\!p$ are fibrant,
and the zero$^{th}$ space functor $\OM^{\infty}_G\colon G\sS\! p\rtarr G\sT$ is now 
homotopically meaningful. For a based $G$-CW complex $X$ (with based attaching maps), 
$\SI^{\infty}_GX$ is cofibrant in $G\sS\! p$. In particular, the sphere $G$-spectrum 
$S_G = \SI^{\infty}_GS^0$ is cofibrant.  Since $G$ is a compact Lie group,
the orbits $G/H$ are $G$-CW complexes, hence the $\SI_G^{\infty}(G/H_+)$ are cofibrant.
However, $G\sS\!p$ is not symmetric monoidal under the smash product. The implicit trade off here is 
intrinsic to the mathematics, as was explained by Lewis \cite{Lewis}; see \cite{MayRant} for a more recent discussion.  

We summarize some constructions in \cite{EKMM} that work in exactly the same fashion
equivariantly as nonequivariantly.  We
have the $G$-space $\sL(j)$ of linear isometries $U^j\rtarr U$, with $G$ acting by
conjugation. These spaces form an $E_{\infty}$ $G$-operad when $U$ is complete.  The 
$G$-monoid $\sL(1)$ gives rise to a monad $\bL$ on $G\sS\!p$. Its algebras are called 
$\bL$-spectra, and we have the category $G\sS\! p[\bL]$ of $\bL$-spectra.  It has a smash 
product $\sma_{\sL}$ which is associative and commutative but not unital. The action map 
$\xi\colon \bL Y\rtarr Y$ of an $\bL$-spectrum $Y$ is a stable equivalence. 

Suspension $G$-spectra are naturally $\bL$-spectra. In particular, the sphere 
$G$-spectrum $S_G$ is an $\bL$-spectrum.  There is a natural stable equivalence 
$$\la\colon S_G\sma_{\sL} Y\rtarr Y$$ 
for $\bL$-spectra $Y$.  The $S_G$-modules are those 
$Y$ for which $\la$ is an isomorphism, and they are the objects of $G\sZ$. All suspension 
$G$-spectra are $S_G$-modules, and so are all $\bL$-spectra of the form 
$S_G\sma_{\sL} Y$.  The smash product $\sma$ on $S_G$-modules is just the restriction 
of the smash product $\sma_{\sL}$, and it gives $G\sZ$ its symmetric monoidal structure.  

We have a sequence of Quillen left adjoints
\[\xymatrix{ G\sT \ar[r]^-{\SI^{\infty}_G} & G\sS\!p \ar[r]^-{\bL} 
& G\sS\! p[\bL] \ar[r]^-{\bJ} & G\sZ, } \]
where $\bL X$ is the free $\bL$-spectrum generated by a $G$-spectrum $X$ and 
$\bJ Y = S_G \sma_{\sL}Y$ is the $S_G$-module generated by an $\bL$-spectrum $Y$.  
We let $\bF = \bJ\bL$; then $\bL$, $\bJ$, and $\bF$ are Quillen equivalences.
The composite $\ga = \xi\circ \la\colon \bF Y\rtarr Y$ is a stable equivalence
for any $\bL$-spectrum $Y$. We have defined $\SIG$ to be the composite functor 
$\bF\SI^{\infty}_G$, and we have the natural stable equivalence of $S_G$-modules 
$\ga\colon \SIG X\rtarr \SI^{\infty}_G X$. 

The tensor $Y\odot X$ of a $G$-prespectrum and a based $G$-space $X$ has $V^{th}$
$G$-space $Y(V)\sma X$.  When $Y$ is a $G$-spectrum, the $G$-spectrum $Y\odot X$ is 
$L(\ell Y\odot X)$, where $\ell Y$ is the underlying $G$-prespectrum
of $Y$ \cite[I.3.1]{LMS}.
Tensors in $G\sS\!p[\bL]$ and $G\sZ$ are inherited from those in $G\sS\!p$. All of our 
left adjoints are enriched in $\sT$ and preserve tensors. This leads to the following 
relationship between $\sma$ and $\SIG$.

\begin{prop}\label{OK} 
For based $G$-spaces $X$ and $Y$, there are natural isomorphisms
\[ \SIG X \sma \SIG Y \iso (\Sph\sma \Sph)\odot (X\sma Y) 
\iso  \Sph\sma \SIG (X\sma Y). \]
\end{prop}
\begin{proof} We have 
$\SI^{\infty}_G X \iso S_G \odot X$ and therefore
\[ \SIG X = \bF\SI^{\infty}_GX \iso \bF(S_G\odot X) \iso (\bF S_G)\odot X =\Sph\odot X.\]
We also have   
\[ (\Sph\odot X)\sma (\Sph\odot Y)\iso (\Sph\sma\Sph)\odot (X\sma Y) \]
and the conclusion follows.
\end{proof}

\section{Appendix: Whiskering $G\sE$ to obtain strict unit $1$-cells}\label{Whisker} 

The bicategory $G\sE$ of \autoref{span1a} narrowly misses being a strict 2-category, and we whisker the unit $1$-cells to obtain a strict $2$-category $\GEone$.\footnote{We thank Ang\'{e}lica Osorno for help with the material in this section.}
Before focusing on specifics we give an elementary general definition.

\begin{defn}\label{def:whisker}
 For a category $\sD$ with a privileged object $\DE$, define the whiskering $\sD'$ of $\sD$ at $\DE$ by adjoining a new object
$I$ and an isomorphism $\ze\colon I\rtarr \DE$.  We have the inclusion $i\colon \sD \rtarr \sD'$, and we define a retraction functor
$r\colon \sD'\rtarr \sD$ by $r(I) = \DE$ and $r(\ze) = \id_{\DE}$.  Thus $r\com i = \text{Id}_{\sD}$ and the isomorphism $\ze$ on the 
object $I$ together with the identity map on all other objects of $\sD'$ defines a natural isomorphism $\text{Id}_{\sD'} \rtarr i\com r$.
If $\sD$ is a $G$-category and $\DE$ is $G$-fixed, then $\sD'$ is a $G$-category with $I$ and $\ze$ fixed by $G$, and then $\sD$ and $\sD'$ are $G$-equivalent.
\end{defn}

The whiskered category $\GEone$ ``enriched in permutative categories'' and the whiskered $G$-category $\EGone$ ``enriched in permutative $G$-categories''
are defined to have the same objects, or $0$-cells, as $G\sE$ and $\sE_G$, namely the finite $G$-sets $A$ in both cases.

\begin{defn}\label{span1b}
If $A\neq B$ or if $|A|\leq 1$ and $A=B$, we define $\GEone(A,B)$ to be the permutative category $G\sE(A,B)$.
For each $A$ of cardinality at least $2$, we define 
\[\GEone(A,A) = G\sE(A, A)'\!,\]
where the whiskering is performed at the 1-cell $\Delta_A$.
We denote the adjoined $1$-cell by $I_A$ and the adjoined isomorphism $2$-cell by $\ze_A\colon I_A \rtarr \Delta_A$.  We specify a permutative structure on  
$\GEone(A,A)$ by setting
\[ E{\amalg}F = \left\{ \begin{array}{ll} {I_A} & \text{if}\ (E,F)=({I_A},\emptyset ) \ \text{or}\ (\emptyset ,{I_A}) \\ 
i\big(r(E)\amalg r(F) \big) 
& \text{otherwise.}
\end{array}\right.
\]
We have denoted the monoidal product as $\amalg$ since the product in $G\sE(A\times A)$ is given by the disjoint union of spans.
As the only $2$-cell in $\GEone(A,A)$ with source or target $\emptyset $ is $\id_\emptyset $, this product extends uniquely to a functor. Since the retraction 
\[r\colon \GEone(A,A) \rtarr G\sE(A, A)\]
is strict monoidal 
and an equivalence of categories, the symmetry isomorphism 
$\gamma\colon \amalg \iso \amalg\tau$ on $G\sE(A,A)$ lifts uniquely to a symmetry isomorphism $\gamma\colon \amalg \iso \amalg\tau$ 
on $\GEone(A,A)$.
Observe that the inclusion $i\colon G\sE(A,A) \to \GEone(A,A)$ is strict monoidal.

To extend composition to functors
\[ \xymatrix@1{ \GEone(B,C)\times \GEone(A,B) \ar[r]^-{\circ} & \GEone(A,C) \\} \]
we declare ${I_A}$ to be a strict 2-sided unit. It remains to define composition with a 2-cell with source or target ${I_A}$.  Since every such 2-cell factors through $\zeta_A$ and composition with $\DE_A$ is already defined, it suffices to define composition with 
$\zeta_A$. Since $\Delta_A$ is  a strict right unit, for a span $B \longleftarrow E\rtarr A$, abbreviated $E$, we may define $E\circ \zeta_A: E\circ {I_A} \rtarr E\circ \Delta_A$ to be the identity $2$-cell $\id_E$. We define $\zeta_B\circ E: {I_B}\circ E \rtarr \Delta_B \circ E$ to be $\ell_{B,E}^{-1}$,  where $\ell_{B,E}$ is the $2$-cell defined in (\ref{laDe}).
\end{defn}

\begin{rem}
In \cite{BO}, and also in a previous version of this article, a different strictification of $G\sE$ was proposed, namely just redefining composition with $\Delta_A$ to force this to be a unit $1$-cell. Unfortunately, this breaks associativity, since the 1-cell $\Delta_A$ is decomposable under composition if $|A|\geq 2$. 
\end{rem}

We have a precisely analogous definition on the level of $G$-categories, obtaining a strict $2$-category $\EGone$ from $\sE_G$. 

\begin{defn}\label{span2b}
If $A\neq B$ or if $|A|\leq 1$ and $A=B$, we define $\EGone(A,B)$ to be the permutative $G$-category $\sE_G(A,B)$.
For each $A$ of cardinality at least $2$, we define 
\[\EGone(A,A) = \sE_G(A, A)'.\]
We denote the adjoined $1$-cell by $I_A$ and the adjoined isomorphism $2$-cell by $\ze_A$.  We specify a $G$-permutative structure on  $\EGone(A,A)$ by setting 
\[ \tha(\mu;E_1,\dots,E_n) = \left\{ \begin{array}{ll} {I_A} & \text{if}\ E_i = I_A \ \text{and}\ E_j= \emptyset \ \text{for all}\ j\neq i \\
 \tha(\mu;r(E_1),\dots, r(E_n)) & \text{otherwise.}
\end{array}\right. 
\] 
Observe  that the inclusion $i\colon \sE_G(A,A) \to \EGone(A,A)$ is a map of $\sP_G$-algebras.

To extend composition to a functor
\[ \xymatrix@1{ \EGone(B,C)\times \EGone(A,B) \ar[r]^-{\circ} & \EGone(A,C),} \]
we declare the object $I_A\in \EGone(A,A)$ to be a strict $2$-sided unit. 
We define composition with a 2-cell whose source or target is of the form $I_A$ exactly as in  
\autoref{span1b}, except that to define $\ze_B\circ E$ we now use  the $\ell_{B,E}$ defined in \eqref{laDeG}.
\end{defn}

\bibliographystyle{amsalpha}
\begin{bibdiv}
\begin{biblist}

\bib{ABS}{article}{
    author={Arone, Gregory},
    author={Barnea, Ilan},
    author={Schlank, Tomer M},
    title={Noncommutative CW-spectra as enriched presheaves on matrix algebras},
    eprint = {https://arxiv.org/abs/2101.09775},
  year={2021}
}

\bib{CB}{article}{
   author={Barwick, Clark},
   title={Spectral Mackey functors and equivariant algebraic $K$-theory (I)},
   journal={Adv. Math.},
   volume={304},
   date={2017},
   pages={646--727},
   issn={0001-8708},
   review={\MR{3558219}},
   doi={10.1016/j.aim.2016.08.043},
}

\bib{BGS}{article}{
   author={Barwick, Clark},
   author={Glasman, Saul},
   author={Shah, Jay},
   title={Spectral Mackey functors and equivariant algebraic $K$-theory, II},
   journal={Tunis. J. Math.},
   volume={2},
   date={2020},
   number={1},
   pages={97--146},
   issn={2576-7658},
   review={\MR{3933393}},
   doi={10.2140/tunis.2020.2.97},
}

\bib{BO}{article}{
   author={Bohmann, Anna Marie},
   author={Osorno, Ang\'{e}lica},
   title={Constructing equivariant spectra via categorical Mackey functors},
   journal={Algebr. Geom. Topol.},
   volume={15},
   date={2015},
   number={1},
   pages={537--563},
   issn={1472-2747},
   review={\MR{3325747}},
   doi={10.2140/agt.2015.15.537},
}

\bib{BHM}{article}{
   author={B\"{o}kstedt, M.},
   author={Hsiang, W. C.},
   author={Madsen, I.},
   title={The cyclotomic trace and algebraic $K$-theory of spaces},
   journal={Invent. Math.},
   volume={111},
   date={1993},
   number={3},
   pages={465--539},
   issn={0020-9910},
   review={\MR{1202133}},
   doi={10.1007/BF01231296},
}		

\bib{Carl0}{article}{
   author={Carlsson, Gunnar},
   title={Equivariant stable homotopy and Segal's Burnside ring conjecture},
   journal={Ann. of Math. (2)},
   volume={120},
   date={1984},
   number={2},
   pages={189--224},
   issn={0003-486X},
   review={\MR{763905}},
   doi={10.2307/2006940},
}

\bib{Carl}{article}{
   author={Carlsson, Gunnar},
   title={A survey of equivariant stable homotopy theory},
   journal={Topology},
   volume={31},
   date={1992},
   number={1},
   pages={1--27},
   issn={0040-9383},
   review={\MR{1153236}},
   doi={10.1016/0040-9383(92)90061-L},
}

\bib{CMNN}{article}{
   author={Clausen, Dustin},
   author={Mathew, Akhil},
   author={Naumann, Niko},
   author={Noel, Justin},
   title={Descent and vanishing in chromatic algebraic $K$-theory via group actions}, 
    eprint = {https://arxiv.org/abs/2011.08233},
    year = {2020},
}

\bib{CostWan}{article}{
   author={Costenoble, Steven R.},
   author={Waner, Stefan},
   title={Fixed set systems of equivariant infinite loop spaces},
   journal={Trans. Amer. Math. Soc.},
   volume={326},
   date={1991},
   number={2},
   pages={485--505},
   issn={0002-9947},
   review={\MR{1012523}},
   doi={10.2307/2001770},
}

\bib{Dunn}{article}{
   author={Dunn, Gerald},
   title={$E_n$-monoidal categories and their group completions},
   journal={J. Pure Appl. Algebra},
   volume={95},
   date={1994},
   number={1},
   pages={27--39},
   issn={0022-4049},
   review={\MR{1289117}},
   doi={10.1016/0022-4049(94)90116-3},
}		

\bib{EKMM}{book}{
   author={Elmendorf, A. D.},
   author={Kriz, I.},
   author={Mandell, M. A.},
   author={May, J. P.},
   title={Rings, modules, and algebras in stable homotopy theory},
   series={Mathematical Surveys and Monographs},
   volume={47},
   note={With an appendix by M. Cole},
   publisher={American Mathematical Society, Providence, RI},
   date={1997},
   pages={xii+249},
   isbn={0-8218-0638-6},
   review={\MR{1417719}},
   doi={10.1090/surv/047},
}
	
\bib{EM}{article}{
   author={Elmendorf, A. D.},
   author={Mandell, M. A.},
   title={Permutative categories, multicategories and algebraic $K$-theory},
   journal={Algebr. Geom. Topol.},
   volume={9},
   date={2009},
   number={4},
   pages={2391--2441},
   issn={1472-2747},
   review={\MR{2558315}},
   doi={10.2140/agt.2009.9.2391},
}

\bib{GrM1}{article}{
   author={Greenlees, J. P. C.},
   author={May, J. P.},
   label={GrM1},
   title={Equivariant stable homotopy theory},
   conference={
      title={Handbook of algebraic topology},
   },
   book={
      publisher={North-Holland, Amsterdam},
   },
   date={1995},
   pages={277--323},
   review={\MR{1361893}},
   doi={10.1016/B978-044481779-2/50009-2},
}

\bib{GrM2}{article}{
   author={Greenlees, J. P. C.},
   author={May, J. P.},
   label={GrM2},
   title={Generalized Tate cohomology},
   journal={Mem. Amer. Math. Soc.},
   volume={113},
   date={1995},
   number={543},
   pages={viii+178},
   issn={0065-9266},
   review={\MR{1230773}},
   doi={10.1090/memo/0543},
}

\bib{GrM3}{article}{
   author={Greenlees, J. P. C.},
   author={May, J. P.},
   label={GrM3},
   title={Localization and completion theorems for $M{\rm U}$-module
   spectra},
   journal={Ann. of Math. (2)},
   volume={146},
   date={1997},
   number={3},
   pages={509--544},
   issn={0003-486X},
   review={\MR{1491447}},
   doi={10.2307/2952455},
}

\bib{Guill}{article}{
   author={Guillou, Bertrand J.},
   title={Strictification of categories weakly enriched in symmetric
   monoidal categories},
   journal={Theory Appl. Categ.},
   volume={24},
   date={2010},
   pages={No. 20, 564--579},
   review={\MR{2770075}},
}

\bib{GM0}{article}{
   author={Guillou, Bertrand J.},
   author={May, J. Peter},
   title={Enriched model categories and presheaf categories},
   journal={New York J. Math.},
   volume={26},
   date={2020},
   pages={37--91},
   review={\MR{4047399}},
}
		
\bib{GM3}{article}{
   author={Guillou, Bertrand J.},
   author={May, J. Peter},
   title={Equivariant iterated loop space theory and permutative
   $G$-categories},
   journal={Algebr. Geom. Topol.},
   volume={17},
   date={2017},
   number={6},
   pages={3259--3339},
   issn={1472-2747},
   review={\MR{3709647}},
   doi={10.2140/agt.2017.17.3259},
}
	
\bib{GMM}{article}{
   author={Guillou, Bertrand J.},
   author={May, J. Peter},
   author={Merling, Mona},
   title={Categorical models for equivariant classifying spaces},
   journal={Algebr. Geom. Topol.},
   volume={17},
   date={2017},
   number={5},
   pages={2565--2602},
   issn={1472-2747},
   review={\MR{3704236}},
   doi={10.2140/agt.2017.17.2565},
}

\bib{GMMO1}{article}{
   author={Guillou, Bertrand J.},
   author={May, J. Peter},
   author={Merling, Mona},
   author={Osorno, Ang\'{e}lica M.},
   title={A symmetric monoidal and equivariant Segal infinite loop space
   machine},
   journal={J. Pure Appl. Algebra},
   volume={223},
   date={2019},
   number={6},
   pages={2425--2454},
   issn={0022-4049},
   review={\MR{3906557}},
   doi={10.1016/j.jpaa.2018.09.001},
}

\bib{GMMO2}{article}{
   author={Guillou, Bertrand J.},
   author={May, J. Peter},
   author={Merling, Mona},
   author={Osorno, Ang\'{e}lica M.},
   title={Symmetric monoidal $G$-categories and their strictification},
   journal={Q. J. Math.},
   volume={71},
   date={2020},
   number={1},
   pages={207--246},
   issn={0033-5606},
   review={\MR{4077192}},
   doi={10.1093/qmathj/haz034},
}

\bib{GMMOMain}{article}{
   author={Guillou, Bertrand J.},
   author={May, J. Peter},
   author={Merling, Mona},
   author={Osorno, Ang\'{e}lica M.},
   title={Multiplicative equivariant $K$-theory and the Barratt-Priddy-Quillen theorem},
   eprint = {https://arxiv.org/abs/2102.13246},
   date={2021},
}

\bib{HHR}{article}{
   author={Hill, M. A.},
   author={Hopkins, M. J.},
   author={Ravenel, D. C.},
   title={On the nonexistence of elements of Kervaire invariant one},
   journal={Ann. of Math. (2)},
   volume={184},
   date={2016},
   number={1},
   pages={1--262},
   issn={0003-486X},
   review={\MR{3505179}},
   doi={10.4007/annals.2016.184.1.1},
}

\bib{HyPow}{article}{
   author={Hyland, Martin},
   author={Power, John},
   title={Pseudo-commutative monads and pseudo-closed 2-categories},
   note={Special volume celebrating the 70th birthday of Professor Max
   Kelly},
   journal={J. Pure Appl. Algebra},
   volume={175},
   date={2002},
   number={1-3},
   pages={141--185},
   issn={0022-4049},
   review={\MR{1935977}},
   doi={10.1016/S0022-4049(02)00133-0},
}
	
\bib{Kro}{article}{
   author={Kro, Tore August},
   title={Model structure on operads in orthogonal spectra},
   journal={Homology Homotopy Appl.},
   volume={9},
   date={2007},
   number={2},
   pages={397--412},
   issn={1532-0073},
   review={\MR{2366955}},
}

\bib{Lewis}{article}{
   author={Lewis, L. Gaunce, Jr.},
   title={Is there a convenient category of spectra?},
   journal={J. Pure Appl. Algebra},
   volume={73},
   date={1991},
   number={3},
   pages={233--246},
   issn={0022-4049},
   review={\MR{1124786}},
   doi={10.1016/0022-4049(91)90030-6},
}

\bib{LMS}{book}{
   author={Lewis, L. G., Jr.},
   author={May, J. P.},
   author={Steinberger, M.},
   author={McClure, J. E.},
   title={Equivariant stable homotopy theory},
   series={Lecture Notes in Mathematics},
   volume={1213},
   note={With contributions by J. E. McClure},
   publisher={Springer-Verlag, Berlin},
   date={1986},
   pages={x+538},
   isbn={3-540-16820-6},
   review={\MR{866482}},
   doi={10.1007/BFb0075778},
}

\bib{Lind}{article}{
    label = {Li},
   author={Lind, John A.},
   title={Diagram spaces, diagram spectra and spectra of units},
   journal={Algebr. Geom. Topol.},
   volume={13},
   date={2013},
   number={4},
   pages={1857--1935},
   issn={1472-2747},
   review={\MR{3073903}},
   doi={10.2140/agt.2013.13.1857},
}

\bib{MalkMerl}{article}{
   author={Malkiewich, Cary},
   author={Merling, Mona},
   title={Equivariant $A$-theory},
   journal={Doc. Math.},
   volume={24},
   date={2019},
   pages={815--855},
   issn={1431-0635},
   review={\MR{3982285}},
}

\bib{MalkMerl20}{article}{
   author={Malkiewich, Cary},
   author={Merling, Mona},
   title={The equivariant parametrized $h$-cobordism theorem, the
   non-manifold part},
   journal={Adv. Math.},
   volume={399},
   date={2022},
   pages={Paper No. 108242},
   issn={0001-8708},
   review={\MR{4384608}},
   doi={10.1016/j.aim.2022.108242},
}

\bib{MM}{article}{
   author={Mandell, M. A.},
   author={May, J. P.},
   label={MaM},
   title={Equivariant orthogonal spectra and $S$-modules},
   journal={Mem. Amer. Math. Soc.},
   volume={159},
   date={2002},
   number={755},
   pages={x+108},
   issn={0065-9266},
   review={\MR{1922205}},
   doi={10.1090/memo/0755},
}

\bib{MMSS}{article}{
   author={Mandell, M. A.},
   author={May, J. P.},
   author={Schwede, S.},
   author={Shipley, B.},
   title={Model categories of diagram spectra},
   journal={Proc. London Math. Soc. (3)},
   volume={82},
   date={2001},
   number={2},
   pages={441--512},
   issn={0024-6115},
   review={\MR{1806878}},
   doi={10.1112/S0024611501012692},
}

\bib{EHCT}{book}{
   author={May, J. P.},
   title={Equivariant homotopy and cohomology theory},
   series={CBMS Regional Conference Series in Mathematics},
   volume={91},
   note={With contributions by M. Cole, G. Comeza\~{n}a, S. Costenoble, A. D.
   Elmendorf, J. P. C. Greenlees, L. G. Lewis, Jr., R. J. Piacenza, G.
   Triantafillou, and S. Waner},
   publisher={Published for the Conference Board of the Mathematical
   Sciences, Washington, DC; by the American Mathematical Society,
   Providence, RI},
   date={1996},
   pages={xiv+366},
   isbn={0-8218-0319-0},
   review={\MR{1413302}},
   doi={10.1090/cbms/091},
}

\bib{MayGeo}{book}{
   author={May, J. P.},
   title={The geometry of iterated loop spaces},
   note={Lectures Notes in Mathematics, Vol. 271},
   publisher={Springer-Verlag, Berlin-New York},
   date={1972},
   pages={viii+175},
   review={\MR{0420610}},
}
	
\bib{MayPair}{article}{
   author={May, J. P.},
   title={Pairings of categories and spectra},
   journal={J. Pure Appl. Algebra},
   volume={19},
   date={1980},
   pages={299--346},
   issn={0022-4049},
   review={\MR{593258}},
   doi={10.1016/0022-4049(80)90105-X},
}

\bib{MayPerm}{article}{
   author={May, J. P.},
   title={$E_{\infty }$ spaces, group completions, and permutative
   categories},
   conference={
      title={New developments in topology (Proc. Sympos. Algebraic Topology,
      Oxford, 1972)},
   },
   book={
      publisher={Cambridge Univ. Press, London},
   },
   date={1974},
   pages={61--93. London Math. Soc. Lecture Note Ser., No. 11},
   review={\MR{0339152}},
}

\bib{MayPerm2}{article}{
   author={May, J. P.},
   title={The spectra associated to permutative categories},
   journal={Topology},
   volume={17},
   date={1978},
   number={3},
   pages={225--228},
   issn={0040-9383},
   review={\MR{508886}},
   doi={10.1016/0040-9383(78)90027-7},
}

\bib{MQR}{book}{
   author={May, J. P.},
   title={$E_{\infty }$ ring spaces and $E_{\infty }$ ring spectra},
   series={Lecture Notes in Mathematics, Vol. 577},
   note={With contributions by Frank Quinn, Nigel Ray, and J\o rgen
   Tornehave},
   publisher={Springer-Verlag, Berlin-New York},
   date={1977},
   pages={268},
   review={\MR{0494077}},
}

\bib{MayRant}{article}{
   author={May, J. P.},
   title={What precisely are $E_\infty$ ring spaces and $E_\infty$ ring
   spectra?},
   conference={
      title={New topological contexts for Galois theory and algebraic
      geometry (BIRS 2008)},
   },
   book={
      series={Geom. Topol. Monogr.},
      volume={16},
      publisher={Geom. Topol. Publ., Coventry},
   },
   date={2009},
   pages={215--282},
   review={\MR{2544391}},
   doi={10.2140/gtm.2009.16.215},
}

\bib{MMO}{article}{
    AUTHOR = {May, J. P.},
    AUTHOR = {Merling, M.},
    AUTHOR = {Osorno, A.},
    title = {Equivariant infinite loop space theory, the space level story. },
    eprint = {https://arxiv.org/abs/1704.03413},
    year = {2017},
}

\bib{Nar}{article}{
      title={Parametrized higher category theory and higher algebra: Expos\'e IV -- Stability with respect to an orbital $\infty$-category}, 
      author={Nardin, Denis},
      year={2016},
      eprint={https://arxiv.org/abs/1608.07704},
}

\bib{PS}{article}{
   author={Ponto, Kate},
   author={Shulman, Michael},
   title={Duality and traces for indexed monoidal categories},
   journal={Theory Appl. Categ.},
  volume={26},
   date={2012},
   pages={No. 23, 582--659},
   review={\MR{3065938}},
}

\bib{SaSc}{article}{
    label = {SaSc},
   author={Sagave, Steffen},
   author={Schlichtkrull, Christian},
   title={Diagram spaces and symmetric spectra},
   journal={Adv. Math.},
   volume={231},
   date={2012},
   number={3-4},
   pages={2116--2193},
   issn={0001-8708},
   review={\MR{2964635}},
   doi={10.1016/j.aim.2012.07.013},
}

\bib{Schmitt}{article}{
    label = {Sch},
    author = {Schmitt, V.},
    title = {Tensor product for symmetric monoidal categories},
    eprint = {https://arxiv.org/abs/0711.0324},
    year = {2008},
}

\bib{SS}{article}{
   label = {SchSh},
   author={Schwede, Stefan},
   author={Shipley, Brooke},
   title={Stable model categories are categories of modules},
   journal={Topology},
   volume={42},
   date={2003},
   number={1},
   pages={103--153},
   issn={0040-9383},
   review={\MR{1928647}},
   doi={10.1016/S0040-9383(02)00006-X},
}

\bib{Shim}{article}{
    label = {Sh},
   author={Shimakawa, Kazuhisa},
   title={Infinite loop $G$-spaces associated to monoidal $G$-graded
   categories},
   journal={Publ. Res. Inst. Math. Sci.},
   volume={25},
   date={1989},
   number={2},
   pages={239--262},
   issn={0034-5318},
   review={\MR{1003787}},
   doi={10.2977/prims/1195173610},
}

\end{biblist}
\end{bibdiv}

\end{document}